\newcommand{\pnorm}[3]{\| #1 \|_{L^{#2}(#3)}}
\newcommand{\WW}{\textcolor{black}}
\newcommand{\partder}[2]{\frac{\partial #1}{ \partial #2}}
\newcommand{\abs}[1]{\left| #1  \right|}
\newcommand{\heat}[1]{(\partial_t - d_{#1}\Delta) #1}
\newcommand{\R}{\mathcal{R}}
\newcommand{\F}{\mathcal F}
\newcommand{\V}{\mathcal V}
\newcommand{\tot}{\text{total}}
\newcommand{\eps}{\varepsilon}
\def\XXint#1#2#3{{\setbox0=\hbox{$#1{#2#3}{\int}$ }
		\vcenter{\hbox{$#2#3$ }}\kern-.6\wd0}}
\title{Analysis of a Reaction-Diffusion SIR Epidemic Model with Noncompliant Behavior\thanks{Submitted to the editors \today.}}
\author{Christian Parkinson\thanks{Department of Mathematics, University of Arizona, Tucson, AZ
  (\email{chparkin@math.arizona.edu, weinanwang@math.arizona.edu}).}
\and Weinan Wang\footnotemark[2]}
\newtheorem{theorem}{Theorem}[section]
\newtheorem{lemma}[theorem]{Lemma}
\begin{document}

\begin{center}
\Large Analysis of a Reaction-Diffusion SIR Epidemic Model with Noncompliant Behavior
\end{center}

\begin{center}
\large Christian Parkinson\footnote{Department of Mathematics, University of Arizona, 617 N. Santa Rita Ave, Tucson, AZ, 85741 \\(chparkin@math.arizona.edu)} and Weinan Wang\footnote{Department of Mathematics, University of Oklahoma, 601 Elm Ave, Norman, OK, 73019 \ (ww@ou.edu)}
\end{center}

\noindent{\bf Abstract.}
Recent work from public health experts suggests that incorporating human behavior is crucial in faithfully modeling an epidemic. We present a reaction-diffusion partial differential equation SIR-type population model for an epidemic including behavioral concerns. In our model, the disease spreads via mass action, as is customary in compartmental models. However, drawing from social contagion theory, we assume that as the disease spreads and prevention measures are enacted, noncompliance with prevention measures also spreads throughout the population. We prove global existence of classical solutions of our model, and then perform $\R_0$-type analysis and determine asymptotic behavior of the model in different parameter regimes. Finally, we simulate the model and discuss the new facets which distinguish our model from basic SIR-type models. \\

\noindent {\bf Keywords.}
Reaction-diffusion equations; SIR; epidemic model; human behavior; non-compliance\\

\noindent{\bf MSC Codes}
35K55, 35K57, 92D30\\

\section{Introduction}

In the early days of the COVID-19 pandemic, many nations implemented intervention methods in attempt to slow the spread of the disease. In the absence of vaccines, some common intervention strategies are mandated social-distancing, mask wearing, and self-quarantine. However, studies by public health experts suggest that nontrivial portions of the population may not comply with prevention measures like these, and that this noncompliance affects the spread of the disease \cite{med1,med2,med3,med4,med5}. Accordingly, there has been recent interest in incorporating human behavior and noncompliance with governmental interventions into mathematical models of epidemiology \cite{CP1,CP2,ModelWithHumanBehavior1,ModelWithHumanBehavior2,ModelWithHumanBehavior3}.

Social contagion theory hypothesizes that behaviors, attitudes, sentiments, and mental states can spread within social groups analogously to the manner in which a disease spreads \cite{SC1,SC2,SC3}. Among other things, social contagion theory has been used to model adolescent sexual behavior \cite{SC4}, illicit drug use \cite{SC5}, depression \cite{SC6,SC7}, and violent crime \cite{SC8}. 

Motivated by social contagion theory, the authors of \cite{CP1} present a SIR-type ordinary differential equation model for epidemics wherein governmental protocols are enacted, but noncompliant behavior evolves as a parallel disease at the same time as the actual disease. In this paper, we propose a similar model using reaction-diffusion partial differential equations. Several authors have analyzed diffusive extensions of basic ordinary differential equation epidemic models \cite{RD1,RD2,RD3,RD4,RD5}. However, to the authors' knowledge, none of these have attempted to incorporate human behavioral effects. 

A basic reaction-diffusion epidemic model considers three subpopulations: the susceptible population ($S$), the infected or infectious population ($I$), and the recovered or removed population ($R$). The populations evolve in an open, bounded, connected domain $\Omega \subset \mathbb R^n$ with Lipschitz boundary (typically $n=2$, though this is not usually important for the analysis). We assume a constant birth rate $b \in C(\overline \Omega)$ into the susceptible population, and that deaths occur proportional to population sizes with rate $\delta > 0$. Finally, if populations diffuse at constant rates $d_S,d_I,d_R>0$ and infection spreads via a nonlinear mass-action term, we arrive at simple reaction-diffusion epidemic model: \begin{equation}\label{eq:basicSIR}\begin{split}
	\heat S &= b(x) -\beta SI - \delta S,\\
	\heat I &= \beta SI - (\gamma+\delta) I, \\
	\heat R &= \gamma I - \delta R, 
	\end{split}\,\,\,\,\,\,\, (x,t) \in \Omega \times (0,\infty) \end{equation} where $\beta > 0$ is the infection rate and $\gamma > 0$ is the recovery rate. This is typically accompanied by zero-flux boundary conditions \begin{equation} \nabla S \cdot n = \nabla I \cdot n = \nabla R \cdot n = 0,\,\,\, \text{ for }  (x,t) \in \partial \Omega \times (0,\infty),
	\end{equation} and nonnegative initial conditions $S(\cdot,0)=S_0,I(\cdot,0)=I_0,R(\cdot ,0)=R_0$ where $S_0,I_0,R_0 \in C(\overline \Omega).$

 We append \eqref{eq:basicSIR} by assuming prevention measures have been implemented to slow the spread of the disease, and including populations $S^*,I^*, R^*$ consisting of those who do not comply with the measures. Henceforth, the asterisk will denote noncompliance with prevention measures, while populations $S,I,R$ with no asterisk denote compliant populations. In particular, we let $N^* = S^*+I^*+R^*$ denote the total noncompliant population. Among compliant populations, we reduce the infectivity by a factor of $\alpha \in [0,1]$. Again, infection spreads via nonlinear mass-action terms, but to account for the reduction in infectivity, in any such terms, $S$ and $I$ are replaced with $(1-\alpha)S$ and $(1-\alpha)I$ respectively. Treating noncompliance as a social contagion, we include additional mass-action terms which facilitate transfer from compliant to noncompliant behavior with ``infectivity" rate $\mu>0$. Likewise, we assume noncompliant populations become compliant proportional to the sizes of the populations with rate $\nu \ge 0.$  With these assumptions, we propose the model
	\begin{equation} \label{eq:SIRwithCompliance} \begin{split}
	\heat S &= \xi b(x) -\beta (1-\alpha)S((1-\alpha)I + I^*) - \mu SN^* + \nu S^*  - \delta S,\\
	\heat I &= \beta (1-\alpha)S((1-\alpha)I + I^*) - \gamma I - \mu IN^* + \nu I^* - \delta I, \\
	\heat R &= \gamma I - \mu RN^* + \nu R^*- \delta R,\\
	\heat {S^*} &=(1-\xi)b(x) -\beta S^*((1-\alpha)I + I^*) + \mu SN^* - \nu S^* - \delta S^*,\\
	\heat {I^*} &= \beta S^*((1-\alpha)I + I^*) - \gamma I^* +\mu IN^* - \nu I^* - \delta I^*, \\
	\heat {R^*}&= \gamma I^* + \mu RN^* - \nu R^* - \delta R^*,
	\end{split}
	\end{equation} where, to reiterate, $N^* = S^* + I^* + R^*$ is the total noncompliant population. One final parameter is $\xi \in [0,1]$: the portion of the newly introduced susceptible population which is compliant. For several of the results below, we will be interested in the cases $\xi = 1$ or $\xi = 0$ so that alternately everyone is born compliant or everyone is born noncompliant. As with the basic model, we consider zero-flux boundary conditions and continuous initial data. This paper is devoted to analysis of \eqref{eq:SIRwithCompliance} in different parameter regimes. To contextualize the analysis, we briefly mention results regarding \eqref{eq:basicSIR}.

	The rest of the paper is organized as follows. In Section \ref{s2}, we introduce preliminaries and notations. In Section \ref{s3}, we state our main results on the global existence of solutions. In Section \ref{s4}, we investigate the basic reproduction number $\mathcal R_0$ and its relation to asymptotic behavior of the model. In Section \ref{s5}, we present and discuss some simulations of the model. We conclude with a brief discussion of our results and avenues for future work in section \ref{sec:conclusion}.
	
	\section{Preliminaries}\label{s2}
	
	  In this section, we prove some basic results and establish some lemmas that will be useful moving forward.
	
	Under the assumptions listed above (specifically, $\Omega$ is open, bounded, connected with Lipschitz boundary, and zero-flux boundary conditions), one easily establishes bounds on the total population as long as classical solutions exist. Indeed, for \eqref{eq:basicSIR} we define \begin{equation} \label{eq:totalPop} N_{\tot}(t) = \int_{\Omega} (S(x,t) + I(x,t) +R(x,t)) dx \end{equation} or for \eqref{eq:SIRwithCompliance} we define \begin{equation} \label{eq:totalPop2} N_{\tot}(t) = \int_\Omega (S(x,t) + I(x,t) + R(x,t) + S^*(x,t) + I^*(x,t) + R^*(x,t))dx. \end{equation} In either case, one checks that \begin{align*} N_{\tot}'(t) =  \|b\|_{L^1(\Omega)} - \delta N_{\tot}(t), 
	\end{align*} whereupon \begin{equation} \label{eq:massBound} N_{\tot}(t) \le N_{\tot}(0)e^{-\delta t} + \frac{\|b\|_{L^1(\Omega)}}{\delta}, \,\,\,\,\,\, t \ge 0. \end{equation} As we will see shortly, given nonnegative initial conditions, the populations remain nonnegative as long as they exist, so this establishes a bound on the total population: the $L^1$-norm of the sum of all compartments. This will be useful in section \ref{sec:compDFE}. However, to establish global existence, we need stronger $L^\infty$ bounds. 
	
	Supposing that initial profiles are continuous, local existence and continuity of solutions to \eqref{eq:basicSIR} and \eqref{eq:SIRwithCompliance} follows since the nonlinearities are locally Lipschitz \cite[Theorem 11.12]{smoller}. We use some lemmas to prove that as long as solutions exist and remain nonnegative, they remain bounded. This is sufficient for global existence (see, for example, \cite[Lemma 1.1]{pierre}). \\
	
	\begin{lemma} \label{lem:posPreserve} Suppose that $d > 0$ and $u \in C([0,T] ; C^2(\Omega) \cap C(\overline \Omega))$ satifies \begin{equation} \label{eq:lem1} \begin{split} \partial_t u - d\Delta u &= f(x,t,u) \ge 0, \,\,\,\,\,\, (x,t) \in \Omega \times [0,T] \\ u(x,0) &= u_0(x) \ge 0, \,\,\,\,\, x \in \Omega. \end{split} \end{equation} along with the zero-flux boundary condition. Then $u(x,t) \ge 0$ for all $(x,t) \in \Omega \times [0,T].$\end{lemma}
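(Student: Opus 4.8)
The plan is to establish the result by a maximum-principle argument phrased through an energy estimate on the negative part of $u$, which sidesteps the difficulties that Hopf's lemma would encounter at corners of the merely Lipschitz boundary. Observe first that the hypothesis $f(x,t,u) \ge 0$ makes $u$ a supersolution of the heat equation, $\partial_t u - d\Delta u \ge 0$. I would exploit this together with the zero-flux condition, which is exactly what is needed to discard the boundary contribution in the integration by parts below.

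Write $u_- \defeq \max(-u,0) \ge 0$, so that $u_- \equiv 0$ precisely when $u \ge 0$, and $u_-(\cdot,0) \equiv 0$ since $u_0 \ge 0$. The first step is to differentiate $E(t) \defeq \tfrac12 \int_\W u_-^2 \, dx$ in time. Using $\partial_t u_- = -\mathbf{1}_{\{u<0\}}\, \partial_t u$ together with the equation $\partial_t u = d\Delta u + f$, one obtains $E'(t) = -\int_\W u_-\,(d\Delta u + f)\,dx$. The second step is to integrate by parts in the Laplacian term: the resulting boundary integral is $-d\int_{\partial\W} u_-\,(\nabla u \cdot n)\,dS$, which vanishes identically by the zero-flux condition, leaving $d\int_\W \nabla u_- \cdot \nabla u \, dx = -d\int_\W \abs{\nabla u_-}^2\,dx \le 0$ since $\nabla u_- = -\mathbf{1}_{\{u<0\}}\nabla u$ almost everywhere. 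The remaining contribution $-\int_\W u_- f\,dx \le 0$ because $u_- \ge 0$ and $f \ge 0$. Hence $E'(t) \le 0$, and with $E(0) = 0$ and $E \ge 0$ we conclude $E \equiv 0$, forcing $u_- \equiv 0$, i.e. $u \ge 0$ on $\W \times [0,T]$.

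The main obstacle is one of rigor rather than of strategy: the stated class $C([0,T];C^2(\W)\cap C(\overline{\W}))$ must be shown to justify (i) differentiating $E$ under the integral sign, (ii) the chain rule for the Lipschitz truncation $s \mapsto \max(-s,0)$ composed with $u$, and (iii) the divergence theorem up to a Lipschitz boundary. Points (i)--(ii) follow from dominated convergence and the standard fact that $u_-$ inherits the Sobolev regularity of $u$ with $\nabla u_- = -\mathbf{1}_{\{u<0\}}\nabla u$ a.e.; point (iii) is valid on Lipschitz domains and is unproblematic here because $\nabla u \cdot n$ is precisely the trace appearing in the boundary condition. As a textbook alternative valid on smooth domains, one may instead run the classical perturbation argument on $v_\eps \defeq u + \eps(1+t)$, which satisfies $\partial_t v_\eps - d\Delta v_\eps \ge \eps > 0$: a negative interior minimum is ruled out by the first- and second-derivative tests, while a boundary minimum is excluded by the zero-flux condition via Hopf's lemma, so $v_\eps > 0$ and letting $\eps \downarrow 0$ yields $u \ge 0$. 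The energy argument above is the cleaner route for the Lipschitz setting assumed throughout.
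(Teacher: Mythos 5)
Your proof is correct, but it takes a genuinely different route from the paper. The paper disposes of this lemma in one line: since $f(x,t,u)\ge 0$, the solution $u$ is a supersolution of the heat equation, the zero function is a subsolution, and the comparison principle (with zero-flux boundary conditions) gives $u\ge 0$ immediately. You instead give a self-contained Stampacchia-type energy argument: testing against the negative part $u_-$, killing the boundary term with the zero-flux condition, using $\nabla u_-\cdot\nabla u=-\abs{\nabla u_-}^2$ and $u_- f\ge 0$ to get $E'\le 0$, and concluding from $E(0)=0$. This buys something real: it avoids any appeal to Hopf's lemma or an interior-ball condition, which is genuinely relevant here since $\Omega$ is only assumed Lipschitz, whereas the paper's cited comparison principle, if proved via the classical perturbation argument you sketch ($v_\eps = u+\eps(1+t)$ plus Hopf at a boundary minimum), would need a smoother boundary. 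The cost is that your argument leans on $H^1$-level technicalities (chain rule for truncations, differentiation under the integral, divergence theorem on Lipschitz domains, and implicitly that $\nabla u$ is square-integrable up to $\partial\Omega$, which the stated class $C([0,T];C^2(\Omega)\cap C(\overline\Omega))$ does not literally guarantee); you flag these honestly, and they are no worse than the regularity slack already present in the paper's own formulation of the boundary condition. Both proofs are valid; yours is more elementary and better adapted to the Lipschitz setting, the paper's is shorter by outsourcing the work to a standard cited result.
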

	
	\begin{proof} This lemma follows directly from the comparison principle since the zero function is a subsolution. \end{proof}
	
	\begin{lemma} \label{lem:bound} Suppose that $u \in C([0,T] ; C^2(\Omega) \cap C(\overline \Omega))$ satisfies  \begin{equation} \label{eq:lem2} \begin{split} \partial_t u - d\Delta u &\le f(x,t) , \,\,\,\,\,\, (x,t) \in \Omega \times [0,T] \\ u(x,0) &= u_0(x), \,\,\,\,\, x \in \Omega. \end{split} \end{equation} Then for any $t \ge 0$, $$\|u(t)\|_\infty \le\| u_0\|_{\infty} + \int^t_0 \|f(s)\|_{\infty} ds,$$ where $\|u(t)\|_{\infty} = \sup_{x\in\overline\Omega} \abs{u(x,t)}.$ \end{lemma}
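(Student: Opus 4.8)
The plan is to dominate $u$ by an explicit, spatially homogeneous supersolution and invoke the comparison principle already used for Lemma~\ref{lem:posPreserve}. Define
$$
v(t) \defeq \|u_0\|_\infty + \int_0^t \|f(s)\|_\infty \, ds,
$$
viewed as a function of $(x,t)$ that happens to be constant in $x$. First I would verify that $v$ is a supersolution of the problem governing $u$: because $v$ is spatially constant we have $\Delta v = 0$ and $\nabla v \cdot n = 0$, so the zero-flux condition holds trivially; moreover $\partial_t v - d\Delta v = \|f(t)\|_\infty \ge f(x,t) \ge \partial_t u - d\Delta u$ pointwise on $\W \times [0,T]$ by \eqref{eq:lem2}, and at the initial time $v(x,0) = \|u_0\|_\infty \ge u_0(x)$.

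Next I would apply the comparison principle to the difference $w \defeq v - u$, which by the above satisfies $\partial_t w - d\Delta w \ge 0$ with $w(\cdot,0) \ge 0$ and zero-flux boundary data; hence $w \ge 0$, that is, $u(x,t) \le v(t)$ for every $(x,t) \in \W \times [0,T]$. This immediately yields the upper estimate $\sup_{x} u(x,t) \le \|u_0\|_\infty + \int_0^t \|f(s)\|_\infty\, ds$, which is the content of the lemma for the top half of the absolute value.

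The step I expect to be the main obstacle is upgrading this one-sided estimate to the stated bound on $\|u(t)\|_\infty = \sup_x |u(x,t)|$, since the differential inequality in \eqref{eq:lem2} controls $u$ only from above: passing to $-u$ reverses the inequality, so its source term is no longer bounded above by $\|f\|_\infty$ and the same comparison cannot be rerun verbatim. In the settings where this lemma is actually invoked, $u$ is one of the (nonnegative) population compartments, so Lemma~\ref{lem:posPreserve} gives $u \ge 0$, whence $|u| = u$ and the upper bound is already the full bound. Absent nonnegativity one needs the companion lower control $\partial_t u - d\Delta u \ge -\|f\|_\infty$ (which holds, for instance, when \eqref{eq:lem2} is an equality), after which the identical comparison argument applied to $-u$ against $v$ closes the lower side. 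As an alternative route I would note that the whole estimate follows from the Duhamel representation $u(t) = e^{t d\Delta}u_0 + \int_0^t e^{(t-s)d\Delta}\bigl(\partial_s u - d\Delta u\bigr)(s)\,ds$, using that the Neumann heat semigroup is positivity preserving and an $L^\infty$-contraction fixing constants; monotonicity in the source then reproduces the same bound, with the same caveat on the lower side.
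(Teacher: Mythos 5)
Your argument is essentially the paper's own proof: the paper defines $v(x,t) = \|u_0\|_\infty + \int_0^t \|f(s)\|_\infty\,ds - u(x,t)$ (your $w = v-u$) and applies Lemma~\ref{lem:posPreserve} to conclude $v \ge 0$, exactly the comparison you run. Your closing observation about the absolute value is apt but is not a gap in your proposal: the paper's proof likewise establishes only the one-sided bound $u \le \|u_0\|_\infty + \int_0^t \|f(s)\|_\infty\,ds$, and the lemma as stated is applied throughout only to nonnegative solutions, for which $|u| = u$.
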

	
	\begin{proof} Define $v(x,t) = \|u_0\|_{\infty} + \int^t_0 \|f(s)\|_\infty ds  - u(x,t)$. Then $$v_t - d\Delta v = \|f(t)\|_{\infty} - (u_t - d\Delta u) \ge 0$$ and $v(x,0) = \|u_0\|_\infty - u_0(x) \ge 0$ so applying lemma \ref{lem:posPreserve} implies that $v$ is nonnegative and the claim follows. \end{proof}
	
	With these alone, we can establish global-in-time existence for \eqref{eq:basicSIR}. This is a classical result. We repeat the proof because it is quite short and provides contrast for the relative difficulty in proving global existence for \eqref{eq:SIRwithCompliance}. 
 
 The basic outline of the argument is as follows. Let $T^*$ be the supremum of all $\tau > 0$ such that classical solutions to \eqref{eq:basicSIR} exist on $[0,\tau)$. First, note that since the initial data $S_0,I_0,R_0$ are nonnegative functions, the solutions remain nonnegative as long as they exist. This follows by quasi-positivity of the right hand side and a comparison principle as in \cite[Theorem 11, p. 29]{kuttler}. Thus by \cite[Lemma 1.1]{pierre}, to prove global existence (i.e. to prove that $T^* = +\infty$), it suffices to prove that if \WW{$T^* < \infty,$} then $\|S(t)\|_\infty, \|I(t)\|_{\infty}, \|R(t)\|_\infty$ are bounded in $[0,T^*)$. We prove this in the next theorem.

	\begin{theorem} \label{thm:globalExistenceBasic} Suppose that $S(\cdot,t),I(\cdot,t),R(\cdot,t)$ are classical solution of \eqref{eq:basicSIR} on $[0,T^*)$ with nonnegative initial data $S_0,I_0,R_0 \in L^\infty(\Omega)$. If $T^*$ is finite, then there exists a constant $M > 0$ (depending on $T^*$) such that $\|S(t)\|_{\infty}, \|I(t)\|_\infty, \|R(t)\|_\infty \le M$  for $ t \in [0,T^*)$. \end{theorem}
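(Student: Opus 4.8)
The plan is to bound the three compartments one at a time, exploiting the sign structure of the reaction terms together with Lemma~\ref{lem:bound}, and using the finiteness of $T^*$ to absorb the one genuinely unfavorable nonlinearity. Throughout I take for granted that $S,I,R \ge 0$ on $[0,T^*)$, which holds by quasipositivity of the right-hand side and Lemma~\ref{lem:posPreserve}, as already noted in the outline preceding the statement.

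First I would bound $S$. Since $S,I \ge 0$, the reaction terms satisfy $-\beta SI - \delta S \le 0$, so $\heat S \le b(x)$. Because $b \in C(\overline\Omega)$ does not involve the unknowns, Lemma~\ref{lem:bound} applies directly and yields $\|S(t)\|_\infty \le \|S_0\|_\infty + t\|b\|_\infty \le \|S_0\|_\infty + T^*\|b\|_\infty =: M_S$ for all $t \in [0,T^*)$. This is where finiteness of $T^*$ first enters.

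The \emph{crux} is the $I$-equation, whose nonlinearity $\beta SI$ carries the positive (``wrong'') sign, so $\heat I$ cannot be bounded above by a fixed function of $(x,t)$ and Lemma~\ref{lem:bound} does not apply directly. Instead I would feed in the bound just obtained: using $0 \le S \le M_S$ gives $\heat I \le \beta M_S I$. I then compare $I$ with the spatially homogeneous supersolution $\phi(t) = \|I_0\|_\infty e^{\beta M_S t}$, which solves $\partial_t \phi - d_I \Delta \phi = \beta M_S \phi$, satisfies the zero-flux condition trivially, and dominates $I$ initially. Setting $z = \phi - I$ and $\tilde z = e^{-\beta M_S t} z$, one checks that $\tilde z$ is a supersolution of the heat equation with $\tilde z(\cdot,0) \ge 0$, so $\tilde z \ge 0$ by the comparison principle (equivalently, Lemma~\ref{lem:posPreserve} applied to $\tilde z$). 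Hence $\|I(t)\|_\infty \le \|I_0\|_\infty e^{\beta M_S t} \le \|I_0\|_\infty e^{\beta M_S T^*} =: M_I$ on $[0,T^*)$. The exponential-in-time bound, finite precisely because $T^* < \infty$, is exactly why the theorem only asserts a constant depending on $T^*$. (If $d_S$ and $d_I$ coincided one could instead bound $S+I$ directly, since the $\beta SI$ terms cancel in that sum; for general diffusion rates the comparison argument above is the clean substitute.)

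Finally, $R$ is immediate: since $R \ge 0$ and $I \le M_I$, we have $\heat R = \gamma I - \delta R \le \gamma M_I$, a fixed bound, so Lemma~\ref{lem:bound} gives $\|R(t)\|_\infty \le \|R_0\|_\infty + T^* \gamma M_I =: M_R$ on $[0,T^*)$. Taking $M = \max\{M_S, M_I, M_R\}$ completes the proof. The only real obstacle is the $I$-equation, and the argument succeeds because, even without exploiting any cancellation, the finite time horizon controls the exponential effect of its growth term; this is in contrast to \eqref{eq:SIRwithCompliance}, where the additional cross-compartment mass-action terms will require more care.
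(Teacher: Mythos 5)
Your proposal is correct and follows essentially the same approach as the paper: both proofs bound $S$ first via nonnegativity and Lemma~\ref{lem:bound} (yielding the same $M_S$), then use $S \le M_S$ to linearize the $I$-equation, and finish with the immediate $R$-bound, taking $M = \max\{M_S,M_I,M_R\}$. The only difference is how the inequality $\heat I \le \beta M_S I$ is integrated: the paper uses an integrating factor, Lemma~\ref{lem:bound}, and Gronwall's inequality, whereas you compare against the explicit supersolution $\|I_0\|_\infty e^{\beta M_S t}$ via Lemma~\ref{lem:posPreserve} --- an equivalent argument producing the same constant $M_I$.
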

	
	\begin{proof}  As stated above, the solution remain nonnegative as long as it exists. In particular, this means that $$\heat S \le b(x) ,$$ whereupon lemma \ref{lem:bound} immediately yields $$\|S(t)\|_\infty \le \|S_0\|_\infty + T^*\|b\|_{L^\infty(\Omega)} := M_S  , \,\,\,\,\,\, t \in [0,T^*).$$ Next, using an integrating factor in the $I$ equation, we have $$\partial_t\big(e^{(\gamma+\delta) t} I \big) - d_I \Delta\big( e^{(\gamma+\delta) t} I \big) = \beta S \big(e^{(\gamma+\delta) t} I \big) \le \beta M_S \big(e^{(\gamma \WW{+\delta}) t} I).$$ 
 Thus by lemma \ref{lem:bound}, $$e^{(\gamma+\delta) t}\|I(t)\|_\infty \le \|I_0\|_{\infty} + \beta M_S \int^t_0  e^{(\gamma+\delta) s} \|I(s)\|_\infty ds.$$ By Gronwall's inequality, we conclude that $$ e^{(\gamma+\delta) t} \|I(t)\|_{\infty} \le \|I_0\| e^{\beta M_S t } \,\,\,\,\, \implies \,\,\,\,\, \|I(t)\|_\infty \le \|I_0\| e^{\WW{\beta M_S T^*}}:= M_I, \,\,\,\,\,\, t \in [0,T^*).$$ Finally, $$\heat R = \gamma I \le \gamma M_I$$  so Lemma 2 gives $$\|R(t)\|_\infty \le \|R_0\|_\infty + \int^t_0 \gamma M_I ds \le \|R_0\|_\infty + \gamma M_I T^* :=M_R, \,\,\,\,\,\, t \in [0,T^*).$$ Thus the theorem holds with $M = \max \{M_S,M_I,M_R\}$. \end{proof}

 With this, we commence with the analysis of \eqref{eq:SIRwithCompliance}, with the goal of establishing global-in-time existence and long time asymptotic behavior in the ensuing sections.

 \section{Global Existence}\label{s3}

    In this section we establish global existence for \eqref{eq:SIRwithCompliance}. To reiterate some of the discussion above, since each piece of initial data is nonnegative and continuous, local existence follows since the nonlinearity is locally Lipschitz. Note that the nonlinearity is still quasi-positive, so non-negativity is preserved as long as solutions exist. Equation \eqref{eq:massBound} also establishes a bound on total population that holds for \eqref{eq:SIRwithCompliance}. 
    
    With these notes, the proof of global existence for \eqref{eq:SIRwithCompliance} is much simpler if the diffusion coefficients do not depend on compliant/noncompliant status. In this case, we can easily adapt the proof of theorem \ref{thm:globalExistenceBasic}. \\
	
	\begin{theorem} Suppose that $S(\cdot,t),I(\cdot,t),R(\cdot,t),S^*(\cdot,t),I^*(\cdot,t),R^*(\cdot,t)$ are classical solutions of \eqref{eq:SIRwithCompliance} on $[0,T^*)$ with nonnegative initial data $S_0,I_0,R_0,S^*_0,I^*_0,R^*_0 \in L^\infty(\Omega)$ and that $d_S = d_{S^*}$, $d_I = d_{I^*}$ and $d_R = d_{R^*}$. If $T^*$ is finite, then there exists a constant $M > 0$ (depending on $T^*$) such that $$\|S(t)\|_{\infty}, \|I(t)\|_\infty, \|R(t)\|_\infty, \|S^*(t)\|_{\infty}, \|I^*(t)\|_\infty, \|R^*(t)\|_\infty\le M$$  for $ t \in [0,T^*)$.  \end{theorem}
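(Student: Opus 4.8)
The plan is to exploit the equal-diffusion hypothesis to collapse the six-equation system into three scalar differential inequalities of exactly the form already handled in Theorem \ref{thm:globalExistenceBasic}. The key observation is that when $d_S = d_{S^*}$ the functions $S$ and $S^*$ diffuse under the same operator, so adding the $S$- and $S^*$-equations of \eqref{eq:SIRwithCompliance} lets me merge the two Laplacian terms into $d_S\Delta(S+S^*)$; crucially, the social-contagion transfer terms $\mu S N^*$ and $\nu S^*$ cancel in the sum. Writing $\hat S = S + S^*$, I would obtain
\[
(\partial_t - d_S \Delta)\hat S = b(x) - \beta\big((1-\alpha)S + S^*\big)\big((1-\alpha)I + I^*\big) - \delta\hat S.
\]
Since all six populations remain nonnegative as long as they exist (by quasi-positivity of the nonlinearity, as already noted), the mass-action term is nonnegative and $-\delta\hat S \le 0$, whence $(\partial_t - d_S\Delta)\hat S \le b(x)$. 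Lemma \ref{lem:bound} then gives $\|\hat S(t)\|_\infty \le \|\hat S(0)\|_\infty + T^*\|b\|_{L^\infty(\Omega)} =: M_{\hat S}$, and because $0 \le S, S^* \le \hat S$ pointwise, both $\|S(t)\|_\infty$ and $\|S^*(t)\|_\infty$ are bounded by $M_{\hat S}$.

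Next I would replay the integrating-factor argument from Theorem \ref{thm:globalExistenceBasic} on the combined infected compartment. Setting $\hat I = I + I^*$ and adding the $I$- and $I^*$-equations (again the $\mu$ and $\nu$ terms cancel, and $d_I = d_{I^*}$ merges the diffusion) yields
\[
(\partial_t - d_I \Delta)\hat I = \beta\big((1-\alpha)S + S^*\big)\big((1-\alpha)I + I^*\big) - (\gamma+\delta)\hat I.
\]
Using $\alpha \in [0,1]$ I can estimate $(1-\alpha)S + S^* \le \hat S \le M_{\hat S}$ and $(1-\alpha)I + I^* \le \hat I$, so the right-hand side is at most $\beta M_{\hat S}\hat I - (\gamma+\delta)\hat I$. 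Multiplying by the integrating factor $e^{(\gamma+\delta)t}$, applying Lemma \ref{lem:bound}, and then invoking Gronwall's inequality exactly as in the basic case produces $\|\hat I(t)\|_\infty \le \|\hat I(0)\|_\infty e^{\beta M_{\hat S} T^*} =: M_{\hat I}$, which bounds $\|I(t)\|_\infty$ and $\|I^*(t)\|_\infty$.

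Finally, for the combined recovered compartment $\hat R = R + R^*$, adding the last two equations gives $(\partial_t - d_R\Delta)\hat R = \gamma\hat I - \delta\hat R \le \gamma M_{\hat I}$, and one more application of Lemma \ref{lem:bound} yields $\|\hat R(t)\|_\infty \le \|\hat R(0)\|_\infty + \gamma M_{\hat I} T^* =: M_{\hat R}$, bounding $\|R(t)\|_\infty$ and $\|R^*(t)\|_\infty$. Taking $M = \max\{M_{\hat S}, M_{\hat I}, M_{\hat R}\}$ completes the proof. There is no genuine analytic obstacle here: the entire argument rests on the algebraic cancellation of the transfer terms under pairwise addition, which is available only because the paired diffusion coefficients agree. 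This is precisely why the general case of mismatched coefficients (treated separately) cannot use this shortcut and requires a more delicate argument.
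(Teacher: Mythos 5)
Your proposal is correct and follows essentially the same argument as the paper: both reduce to the sums $S+S^*$, $I+I^*$, $R+R^*$, exploit the cancellation of the $\mu$ and $\nu$ transfer terms under the equal-diffusion hypothesis, and chain bounds via Lemma \ref{lem:bound} (with Gronwall for the infected compartment). The only cosmetic difference is that the paper keeps the $-\delta\Sigma$ term with an integrating factor to obtain a $T^*$-independent bound on $S+S^*$, whereas you discard it and settle for a $T^*$-dependent bound, which is all the theorem requires.
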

	
	\begin{proof} Since $d_X = d_{X^*}$ for $X \in \{S,I,R\}$, we drop the asterisk in the diffusion coefficients. Since nonnegativity is preserved, we have $$0 \le S,S^* \le S+S^*, \,\,\,\, 0 \le I,I^* \le I+I^*, \,\,\,\, 0 \le R,R^* \le R+R^*.$$ Thus it suffices to prove that $$\Sigma = S+S^*, \,\,\,\, \Phi = I+I^*, \,\,\,\, \Upsilon = R+R^*$$ remain bounded. Since the diffusion coefficients are the same, we can add the first and fourth equation of \eqref{eq:SIRwithCompliance} to see that $$(\partial_t - d_S \Delta) \Sigma = b(x) - \beta ((1-\alpha)S + S^*)((1-\alpha)I + I^*) - \delta \Sigma \le b(x) - \delta \Sigma.$$ Using an integrating factor, we have $$(\partial_t  - d_S \Delta )(e^{\delta t}\Sigma) \le b(x)e^{\delta t}$$ so by lemma \ref{lem:bound}, $$e^{\delta t}\|\Sigma(t)\|_\infty \le \|\Sigma_0\|_\infty + \int^t_0 \|b\|_\infty e^{\delta s}ds \le \|S_0\|_\infty + \|S^*_0\|_\infty + \frac{\|b\|_\infty}\delta (e^{\delta t} - 1)$$ and thus $$\|\Sigma(t)\|_\infty \le \|S_0\|_\infty + \|S_0^*\|_\infty + \frac{\|b\|_\infty}{\delta}(1 - e^{-\delta t}) \le \|S_0\|_\infty + \|S_0^*\|_\infty + \frac{\|b\|_\infty}{\delta}:= M_\Sigma.$$ This provides a bound for $\Sigma$ which is actually independent of $T^*$. Next, adding the second and fifth equations of \eqref{eq:SIRwithCompliance}, we have \begin{align*} (\partial_t - d_I \Delta )\Phi &=  \beta ((1-\alpha)S + S^*)((1-\alpha)I + I^*) - \gamma \Phi - \delta \Phi \\&\le \beta \Sigma \Phi - (\gamma + \delta )\Phi  \\ &\le (\beta M_\Sigma - (\gamma +\delta)) \Phi. \end{align*} This leads to $$(\partial_t - d_I \Delta) (\Phi e^{((\gamma + \delta) - \beta M_\Sigma )t}) \le 0$$ so by lemma \ref{lem:bound}, $$e^{ ((\gamma + \delta) - \beta M_\Sigma)t} \|\Phi(t)\|_\infty \le \|\Phi_0\|_\infty \le \|I_0\|_\infty + \|I^*_0\|_\infty$$ and thus $$\|\Phi(t) \|_\infty \le (\|I_0\|_\infty + \|I^*_0\|_\infty)\WW{\max\{1,e^{(\beta M_\Sigma - (\gamma + \delta))T^*}\}} := M_\Phi.$$ Finally, adding the third and sixth equations in \eqref{eq:SIRwithCompliance}, we have $$(\partial_t - d_R \Delta) \Upsilon = \gamma \Phi - \delta \Upsilon$$ and so $$(\partial_t - d_R \Delta) (\Upsilon e^{\delta t}) = \gamma \Phi e^{\delta t} \le \gamma M_\Phi e^{\delta t}$$ which yields $$e^{\delta t} \|\Upsilon(t)\|_\infty \le \|\Upsilon_0\|_\infty + \gamma M_\Phi \int^t_0 e^{\delta s}ds \le \|R_0\|_\infty + \|R^*_0\|_\infty + \frac{\gamma M_\Phi}{\delta}(e^{\delta t} - 1)$$ whereupon $$\|\Upsilon(t)\|_\infty \le  \|R_0\|_\infty + \|R^*_0\|_\infty + \frac{\gamma M_\Phi}{\delta} := M_\Upsilon.$$ This proves the theorem with $M = \max\{M_\Sigma,M_\Phi,M_\Upsilon\}$. \end{proof}

 We include the above result only to emphasize that in certain cases, the behavior of the actual epidemic described by\eqref{eq:SIRwithCompliance} should be somewhat akin to that of \eqref{eq:basicSIR}, under the association $(\Sigma,\Phi,\Upsilon) \leftrightarrow (S,I,R)$ in the respective models. In the case that all diffusion coefficients are different, we can still achieve global existence, though the proof is no longer nearly so elementary, since we can no longer simply add equations to eliminate the second mass-action nonlinearity which describes transmission of noncompliance. In order to establish global existence with arbitrary positive diffusion coefficients, we first prove two more lemmas, the first of which is an $L^p$ version of lemma \ref{lem:bound}.  \\
	
	\begin{lemma} \label{lem:boundLp} \WW{ Fix $f : \Omega\times [0,T] \to \mathbb R$ and suppose that $v: \Omega \times [0,T] \to \mathbb R$ is a solution of \begin{equation} \label{eq:ForcedHeatEq} (\partial_t - d \Delta)v = f \end{equation} for $(x,t) \in \Omega \times (0,T]$ with initial data $v(\cdot,0) = v_0 \in L^\infty(\Omega)$ and $\partder v n = 0$ on $\partial \Omega \times [0,T]$. Then for any $p \in (1,\infty)$ and $t \in (0,T]$, $$\|v(t)\|_{L^p(\Omega)} \le \|v_0\|_{L^p(\Omega)} + \int^t_0 \|f(\tau)\|_{L^p(\Omega)} d\tau.$$} \end{lemma}
	
	\begin{proof} This is proven with elementary \emph{a priori} bounds. We multiply the equation by $p\abs v^{p-2}v$, integrate in space, and use H\"older's inequality to see \begin{equation} \label{eq:LpboundHeateq} \int_\Omega (\partial_t (\abs{v}^p) - dp\abs{v}^{p-2}v\Delta v) dx = p \int_\Omega \abs{v}^{p-2}vf dx \le p \|v(t)\|_{L^p(\Omega)}^{p/q}\|f(t)\|_{L^p(\Omega)}, \end{equation} where $q$ is the dual exponent of $p$. On the left hand side, we integrate by parts in the latter term (and use the zero-flux boundary condition) to see $$\int_\Omega - \abs{v}^{p-2}v\Delta v dx = (p-1) \int_{\Omega} \abs{\nabla v}^2 \abs{v}^{p-2} dx \ge 0. $$ Thus putting $G(t) = \|v(t)\|^p_{L^p(\Omega)}$, equation \eqref{eq:LpboundHeateq} reads $$G'(t) \le p\pnorm{f(t)}p\Omega G(t)^{1/q} \,\,\,\, \implies \,\,\,\, G(t)^{-1/q}G'(t) \le p\pnorm{f(t)}p\Omega.$$ Integrating gives $$\frac{1}{1-\frac 1 q} (G(t)^{1-1/q} - G(0)^{1-1/q}) \le p\int^t_0 \pnorm{f(\tau)}p\Omega d\tau,$$ and since $1-\frac 1 q = \frac 1 p$, we arrive at $$\pnorm{v(t)}p{\Omega} \le \pnorm{v_0}p\Omega + \int^t_0 \pnorm{f(\tau)}p\Omega d\tau$$  as desired.
  \end{proof} 
	
	 \begin{lemma} \label{lem:keyLemma} \WW{Suppose that $v,w : \Omega \times [0,T] \to \mathbb R$ are such that \begin{equation} \label{eq:diffCoeff}(\partial_t - d \Delta)v \le c_1 \partial_t w + c_2 \Delta w \end{equation} for $(x,t) \in \Omega \times (0,T]$ and that $v,w$ have the same bounded initial data and each satisfy a homogeneous Neumann boundary condition. Then for any $p \in (1,\infty)$, there is $C > 0$ such that for all $t \in (0,T]$, $$\|v\|_{L^p(\Omega_t)} \le C(1+\|w\|_{L^p(\Omega_t)}),$$ where $\Omega_t = \Omega \times [0,t),$ and $C$ depends on the ambient parameters as well as the initial-data. }\end{lemma}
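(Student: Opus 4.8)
The plan is to prove the estimate by a duality argument against the backward heat equation, which is the standard device for turning a one-sided differential inequality whose source sits in derivative form into an honest $L^p$ bound. The obstruction to any direct approach is that the source $c_1\partial_t w + c_2\Delta w$ involves derivatives of $w$ that are in no way controlled by $\|w\|_{L^p(\Omega_t)}$ alone; duality is precisely what lets me transfer all of those derivatives off of $w$ and onto a smooth dual variable. Since the hypothesis controls $(\partial_t - d\Delta)v$ only from above, I would note at the outset that the method naturally bounds the positive part $v^+$; this is harmless because in every application $v$ is one of the nonnegative densities, so $\|v\|_{L^p(\Omega_t)} = \|v^+\|_{L^p(\Omega_t)}$.

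Fix $t \in (0,T]$ and let $q$ be the conjugate exponent of $p$. By duality, $\|v^+\|_{L^p(\Omega_t)} = \sup \int_{\Omega_t} v\theta$, the supremum over $0 \le \theta \in L^q(\Omega_t)$ with $\|\theta\|_{L^q(\Omega_t)} \le 1$. For each such $\theta$ I would solve the backward problem $-\partial_s\psi - d\Delta\psi = \theta$ on $\Omega \times (0,t)$ with terminal data $\psi(\cdot,t) = 0$ and homogeneous Neumann conditions. The comparison principle (Lemma \ref{lem:posPreserve}, applied to the time-reversed equation) gives $\psi \ge 0$, and this sign is exactly what will license use of the differential inequality after integration by parts. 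The computational heart is then to expand $\int_{\Omega_t} v\theta = \int_{\Omega_t} v(-\partial_s\psi - d\Delta\psi)$, integrating by parts in both space and time; using the Neumann conditions on $v,w,\psi$, the vanishing of $\psi$ at $s=t$, the hypothesis, and $\psi \ge 0$, this yields
\[ \int_{\Omega_t} v\theta \le (1-c_1)\int_\Omega v_0\,\psi(\cdot,0)\,dx - c_1\int_{\Omega_t} w\,\partial_s\psi + c_2\int_{\Omega_t} w\,\Delta\psi, \]
where I have used that $v$ and $w$ share the initial datum $v_0$ to combine the two time-boundary contributions.

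The two space-time integrals are controlled by Hölder's inequality as $\le (|c_1|+|c_2|)\,\|w\|_{L^p(\Omega_t)}\big(\|\partial_s\psi\|_{L^q(\Omega_t)} + \|\Delta\psi\|_{L^q(\Omega_t)}\big)$. The essential analytic input is therefore maximal $L^q$-regularity for the Neumann heat operator on $\Omega_t$, namely $\|\partial_s\psi\|_{L^q(\Omega_t)} + \|\Delta\psi\|_{L^q(\Omega_t)} \le C\|\theta\|_{L^q(\Omega_t)}$. I expect this to be the main obstacle: on a smooth domain it is classical parabolic regularity, but on a merely Lipschitz domain such estimates hold only for a restricted range of exponents and must be imported from the appropriate regularity theory, so the admissible range of $p$ (and hence the standing hypotheses on $\Omega$) deserves care. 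Granting it, both integrals are bounded by $C\|w\|_{L^p(\Omega_t)}$.

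What remains, and what I regard as the delicate point, is the time-boundary term $(1-c_1)\int_\Omega v_0\,\psi(\cdot,0)\,dx$. Integrating the backward equation over $\Omega$ and using the Neumann condition gives the conservation identity $\int_\Omega \psi(\cdot,0)\,dx = \|\theta\|_{L^1(\Omega_t)}$, so by Hölder this term is $\le |1-c_1|\,\|v_0\|_\infty\,\|\theta\|_{L^1(\Omega_t)} \le C\,t^{1/p}\|\theta\|_{L^q(\Omega_t)}$. This contribution vanishes outright in the cleanest case $c_1 = 1$ (where the substitution $u = v - w$ removes the initial datum entirely); in general one absorbs it into $\|w\|_{L^p(\Omega_t)}$ by exploiting that $w$ is continuous with $w(\cdot,0) = v_0$, so that the near-$t=0$ contribution to $\|w\|_{L^p(\Omega_t)}$ already scales like $\|v_0\|\,t^{1/p}$ — this is the step that genuinely uses the equality of initial data together with the $L^p$-bound on $w$ inherited from the system, rather than treating $w$ as an arbitrary function. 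Taking the supremum over admissible $\theta$ then delivers $\|v\|_{L^p(\Omega_t)} = \|v^+\|_{L^p(\Omega_t)} \le C\|w\|_{L^p(\Omega_t)}$ with $C$ depending only on $p,d,c_1,c_2,\Omega$ and $T$, as claimed.
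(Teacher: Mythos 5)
Your proposal is correct and follows the same route as the paper's proof: duality against the nonnegative solution $\psi$ of the backward Neumann heat problem, the maximal $L^q$-regularity bound $\|\partial_s\psi\|_{L^q(\Omega_t)}+\|\Delta\psi\|_{L^q(\Omega_t)}\le C\|\theta\|_{L^q(\Omega_t)}$, integration by parts, and H\"older. Where you differ is in the bookkeeping, and on both points of difference your version is the more careful one. The paper asserts that the two initial-data terms cancel because $v_0=w_0$, but the time-boundary term generated by $c_1\partial_t w$ carries the factor $c_1$, so the cancellation is exact only when $c_1=1$; in the paper's own applications the right-hand side has the form $-(\partial_t-d_S\Delta)S$, i.e.\ $c_1=-1$, and the leftover is precisely your $(1-c_1)\int_\Omega v_0\,\psi(\cdot,0)\,dx$. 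Your disposal of that term, via the identity $\int_\Omega\psi(\cdot,0)\,dx=\|\theta\|_{L^1(\Omega_t)}$ together with absorption using $\|w\|_{L^p(\Omega_t)}\gtrsim t^{1/p}$ when $w(\cdot,0)=v_0\not\equiv 0$ (the term vanishes outright if $v_0\equiv 0$), is sound, with the one caveat that the resulting constant then depends also on $v_0$ and on the early-time behavior of $w$, not only on $p,d,c_1,c_2,\Omega,T$ as you claim in your last sentence; the lemma's statement permits this, and it causes no circularity in the Gr\"onwall bootstrap since there $w$ is a fixed solution component. Your restriction to $v^+$ is likewise correct and necessary: with only a one-sided hypothesis, the paper's bound on $\left|\int_{\Omega_t}vg\,dx\,dt\right|$ is an overstatement, and the lemma read literally fails for $w\equiv 0$, $(\partial_t-d\Delta)v=-1$, $v_0=w_0=0$. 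One small repair to your claim that this suffices downstream: in the applications $v=X-z_X$ is a difference of nonnegative functions, not itself nonnegative, but since $0\le X\le (X-z_X)^{+}+z_X$, the positive-part bound is all that is ever used. Your flag about the admissible exponent range for maximal regularity on a merely Lipschitz domain is also fair; the paper addresses this only by citation.
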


  \textit{Remark.} Lemma \ref{lem:keyLemma} is a key lemma for us. Roughly speaking, for a function satisfying a reaction-diffusion equation, this lemma allows us to modify the diffusion coefficient while still maintaining $L^p$ control of the function.

\begin{proof} \WW{The proof is by duality. Fix $p \in (1,\infty)$ and $t \in (0,T]$ and let $q$ be the dual exponent to $p$. For any nonnegative $g \in C^\infty(\Omega_t)$, let $\phi$ be the nonnegative smooth solution of $$-(\partial_t  + d \Delta ) \phi= g,$$ on $\Omega \times [0,t)$ with $\phi(\cdot,t) = 0$ and $\frac{\partial \phi }{\partial n} = 0$ on $\partial \Omega.$ It is classical from the theory of linear parabolic equations (see \cite{Krylov,LadyEtAl,WuYinWang}) that $$\pnorm{\partial_t \phi}{q}{\Omega_t} + \pnorm{\Delta \phi}{q}{\Omega_t} + \sup_{s \in [0,t)} \pnorm{\phi(s)}q\Omega \le C \pnorm g q{\Omega_t}.$$ Multiplying \eqref{eq:diffCoeff} by $\phi$ and integrating gives $$\int_{\Omega_t} v(\underbrace{-\partial_t \phi - d \Delta \phi}_{=g}) \, dx dt +  - \int_{\Omega}  \phi_0 v_0 dx  \le  \int_{\Omega_t} w(-c_1\partial_t \phi + c_2 \Delta \phi) \, dx dt + c_1\int_{\Omega} \phi_0 w_0 dx,$$ where the boundary terms have vanished due to the homogeneous Neumann boundary data, and we have used $\phi(\cdot,t) = 0$. Since the initial data for $v,w$ is the same, we combine those terms to arrive at $$\int_{\Omega_t} vg \,dx dt \le \int_{\Omega_t} w(-c_1\partial_t \phi + c_2 \Delta \phi) \, dx dt + (c_1-1)\int_\Omega \phi_0 v_0 dx.$$ Then applying H\"older's inequality gives \begin{align*}\left|\int_{\Omega_t} vg\, dx dt\right|  &\le   \pnorm{w}{p}{\Omega_t} \left(c_1\pnorm{\partial_t \phi}{q}{\Omega_t} + c_2\pnorm{\Delta \phi}q{\Omega_t}\right)  + C\|w_0\|_{L^p(\Omega)}\|\phi_0\|_{L^1(\Omega)} \\ &\le C(1+\pnorm w p {\Omega_t})\left(c_1\pnorm{\partial_t \phi}{q}{\Omega_t} + c_2\pnorm{\Delta \phi}q{\Omega_t}+ \sup_{s \in[0,t)]} \pnorm{\phi(s)}q\Omega \right) \\  &\le C(1+\pnorm{w}p{\Omega_t}) \pnorm{g}{q}{\Omega_t}\end{align*} Since this holds for all $g \in C^\infty(\Omega),$ we conclude that $$\pnorm v p {\Omega_t} \le C(1+\pnorm w p {\Omega_t})$$ as desired. }\end{proof}

Using these lemmas, we first prove $L^p$ boundedness for the solution $(S,I,R,S^*,I^*,R^*)$ of \eqref{eq:SIRwithCompliance} as long as it exists. While this theorem does not establish global existence, the bulk of the work toward proving global existence is in the proof of this theorem. \\

\begin{theorem} \label{thm:LpBound}Suppose that $S,I,R,S^*,I^*,R^*$ are classical solutions of \eqref{eq:SIRwithCompliance} on $\Omega \times [0,T^*)$ with nonnegative initial data $S_0,S^*_0,I_0,I^*_0,R_0,R^*_0 \in C(\overline \Omega)$ and let $\Omega_t = \Omega \times [0,t)$. If $T^*$ is finite,  then for any $p \in (1,\infty)$, there is $M > 0$ such that for all $t \in [0,T^*)$, $$\pnorm{S}p{\Omega_t}, \pnorm{I}p{\Omega_t},\pnorm{R}p{\Omega_t},\pnorm{S^*}p{\Omega_t},\pnorm{I^*}p{\Omega_t},\pnorm{R^*}p{\Omega_t}\le M.$$ \end{theorem}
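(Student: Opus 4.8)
The plan is to bound the pointwise total density $\Theta = S + I + R + S^* + I^* + R^*$ in $L^p(\Omega_t)$. Because nonnegativity is preserved, $0 \le S, I, R, S^*, I^*, R^* \le \Theta$, so a single $L^p(\Omega_t)$ bound on $\Theta$ immediately yields all six claimed bounds. The starting point is that every reaction term in \eqref{eq:SIRwithCompliance} is an internal transfer: adding the six equations, the $\beta$-, $\gamma$-, $\mu$-, and $\nu$-terms cancel in pairs, leaving the mass-dissipative relation $\partial_t \Theta - \Delta W = b(x) - \delta \Theta \le b(x)$, where $W = d_S S + d_I I + d_R R + d_{S^*} S^* + d_{I^*} I^* + d_{R^*} R^*$ satisfies $\underline d\, \Theta \le W \le \overline d\, \Theta$ with $\underline d, \overline d$ the smallest and largest of the six diffusivities. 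If all diffusivities coincided we would have $W = \underline d\, \Theta$ and could conclude directly from Lemma \ref{lem:bound} (or Lemma \ref{lem:boundLp}) as in the equal-diffusion theorem above; the entire difficulty is the discrepancy $W - \underline d\, \Theta = \sum_i (d_i - \underline d) u_i$, which is exactly the place where the noncompliance mass-action term fails to cancel once the diffusivities differ.

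This is the role of Lemma \ref{lem:keyLemma}. Rewriting the identity as $(\partial_t - \underline d\, \Delta)\Theta \le b + \Delta\big(W - \underline d\, \Theta\big)$ places $\Theta$ in the hypothesis of Lemma \ref{lem:keyLemma} with $w = W - \underline d\, \Theta$, trading the weighted diffusion operator $\Delta W$ for the constant-coefficient operator $\underline d\, \Delta$. After peeling off the inhomogeneity (subtracting the bounded solution of $(\partial_t - \underline d\, \Delta)P = b$ supplied by Lemma \ref{lem:bound}) and absorbing the initial-data mismatch into a data-dependent constant, Lemma \ref{lem:keyLemma} returns an $L^p(\Omega_t)$ estimate for $\Theta$. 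I would run this together with the nonlinear structure as a bootstrap: starting from the uniform bound \eqref{eq:massBound}, which gives $\Theta \in L^1(\Omega_t)$, each pass uses current $L^{p_0}$ control of the components to bound the quadratic sources $\beta((1-\alpha)S + S^*)((1-\alpha)I + I^*)$ and $\mu(\cdot)N^*$ in $L^{p_0/2}$, and then, after replacing the individual diffusivities by a common value via Lemma \ref{lem:keyLemma} and invoking standard parabolic $L^p$-regularity together with Lemma \ref{lem:boundLp}, returns $L^{p_1}$ control with $p_1 > p_0$. Iterating finitely many times reaches any prescribed $p < \infty$.

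The main obstacle is precisely the discrepancy $w = W - \underline d\, \Theta$: since $0 \le w \le (\overline d - \underline d)\Theta$ is comparable to the very quantity being estimated, a single application of Lemma \ref{lem:keyLemma} is self-referential and, by itself, does not close. Breaking this circularity is the crux of the proof, and it is here that the strict dissipation $-\delta\Theta$ and the favorable sign of the summed reaction must be exploited---either to absorb the troublesome term outright or, as in the bootstrap above, to guarantee a genuine gain of integrability at each step so that the iteration terminates. By contrast, the algebraic cancellations, the reduction to $\Theta$, the removal of the source $b$, and the treatment of the homogeneous Neumann data are all routine once Lemmas \ref{lem:posPreserve}--\ref{lem:keyLemma} are in hand.
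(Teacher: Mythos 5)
Your reduction to the total density $\Theta = S+I+R+S^*+I^*+R^*$ and the mass-dissipation identity $\partial_t\Theta - \Delta W = b - \delta\Theta$ are algebraically correct, but the proof has a genuine gap exactly where you locate it: ``breaking the circularity'' is not a step that can be deferred---it is the entire content of the theorem---and neither of the two escape routes you sketch works. Applying Lemma \ref{lem:keyLemma} with $v = \Theta$ and $w = W - \underline{d}\,\Theta$ returns $\|\Theta\|_{L^p(\Omega_t)} \le C\|W - \underline{d}\,\Theta\|_{L^p(\Omega_t)} \le C(\overline{d}-\underline{d})\|\Theta\|_{L^p(\Omega_t)}$, where the constant $C$ comes from maximal $L^q$ regularity of the dual problem and cannot be made small, so the inequality is vacuous (there is also the lesser issue that the lemma requires $v$ and $w$ to share initial data, which $\Theta$ and $W-\underline{d}\,\Theta$ do not). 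The bootstrap fallback is likewise unsubstantiated: the a priori information actually available is $\Theta \in L^\infty(0,T^*;L^1(\Omega))$ from \eqref{eq:massBound} (and at best $L^2(\Omega_t)$ from a duality estimate), so the quadratic sources lie only in $L^1(\Omega_t)$, and parabolic regularity with $L^1$ sources does not return a genuine gain of integrability sufficient to iterate to arbitrary $p$ in general dimension. This is precisely the known obstruction for reaction-diffusion systems with mass control and unequal diffusivities: the summed equation alone does not rule out blow-up, so any correct proof must use structure beyond it.

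The paper's proof supplies that structure. Instead of the full sum, it exploits a telescoping family of partial sums taken in the order $S$, $S+I$, $S+I+R$, $S+I+R+S^*$, $\dots$: for each such partial sum the quadratic reaction terms appear with a favorable (nonpositive) sign and are simply discarded, while the only wrong-sign terms are \emph{linear} ($\xi b$, $\nu S^*$, $\nu I^*$, \dots) and are absorbed into auxiliary functions $z_X$ solving linear heat equations with those sources. Lemma \ref{lem:keyLemma} is then applied compartment by compartment---for instance $(\partial_t - d_I\Delta)(I - z_I) \le -(\partial_t - d_S\Delta)S$ yields $\|I-z_I\|_{L^p(\Omega_t)} \le C\|S\|_{L^p(\Omega_t)}$---so each compartment is controlled by previously handled ones, never by itself; finally the $z_X$ are estimated via Lemma \ref{lem:boundLp} by time integrals of the compartments' $L^p$ norms, and Gr\"onwall's inequality closes the loop for every fixed $p$, with no bootstrap in the exponent. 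If you want to repair your argument, this ordered sign structure of the nonlinearity---not the summed equation---is the ingredient you are missing.
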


\begin{proof} Fix $p \in (1,\infty)$ and $t \in [0,T^*)$. In the course of the proof, we will invent auxiliary functions $z_X$ for  $X \in \{S,I,R,S^*,I^*,R^*\}$. For all such functions, we assume homogeneous initial data $z_X(\cdot,0) = 0$, and zero-flux boundary data. Also $C$ will be a positive constant that changes from line to line and depends on the data including $T^*$.

Let $z_S$ be the solution of $$(\partial_t - d_S \Delta)z_S =  \xi b(x) + \nu S^*.$$ Note that $$(\partial_t-d_S \Delta)(S-z_S) =  -\beta (1-\alpha)S((1-\alpha)I + I^*) - \mu SN^*-\delta S^* \le 0$$ from which lemma \ref{lem:boundLp} yields $$\pnorm{S(t)-z_S(t)}p{\Omega} \le \pnorm{S_0}p{\Omega} \,\,\,\, \implies \,\,\,\,\, \pnorm{S(t)}p\Omega \le C(1 + \pnorm{z_S(t)}p\Omega).$$ Taking the $p^{\text{th}}$ power and integrating in time gives \begin{equation} \label{eq:Spnorm}\pnorm{S}p{\Omega_t}^p \le C(1+\pnorm{z_S}p{\Omega_t}^p).\end{equation} Now let $z_I$ be the solution of $$(\partial_t - d_I\Delta)z_I = \xi b(x) + \nu S^* + \nu I^*.$$  Then $$(\partial_t - d_S)S + (\partial_t - d_I\Delta)(I-z_I) = -\mu(S+I)N^* -\gamma I- \delta(S+I) \le 0$$ so that $$(\partial_t - d_I\Delta)(I-z_I) \le -(\partial_t - d_S\Delta )S.$$ Hence, applying lemma \ref{lem:keyLemma}, we have $$\|I-z_I\|_{L^p(\Omega_t)} \le C(1+\pnorm S p {\Omega_t})$$ and thus \eqref{eq:Spnorm} leads to \begin{equation}\label{eq:Ipnorm}
\|I\|^p_{L^p(\Omega_t)} \le C(1+\pnorm{z_S}p{\Omega_t}^p + \pnorm{z_I}p{\Omega_t}^p).
\end{equation} Next, let $z_R$ be the solution of $$(\partial_t - d_R \Delta)z_R = \xi b(x) + \nu S^* + \nu I^* + \nu R^*.$$ Then $$(\partial_t - d_S\Delta )S + (\partial_t - d_I\Delta)I + (\partial_t - d_R\Delta)(R-z_R) = -\mu(S+I+R)N^* - \delta(S+I+R) \le 0$$ and so $$(\partial_t - d_R\Delta)(R-z_R) \le -(\partial_t - d_S\Delta)S - (\partial_t - d_I\Delta)I.$$ From here, an obvious extension of lemma \ref{lem:keyLemma} (to allow for multiple functions on the right hand side) yields $$\pnorm{R-z_R}p{\Omega_t} \le C(1+\pnorm S p {\Omega_t} + \pnorm I p {\Omega_t}).$$ Then \eqref{eq:Spnorm} and \eqref{eq:Ipnorm} lead to \begin{equation} \label{eq:Rpnorm} \pnorm{R}p{\Omega_t}^p \le C(1 + \pnorm{z_S}p{\Omega_t}^p + \pnorm{z_I}p{\Omega_t}^p + \pnorm{z_R}p{\Omega_t}^p). \end{equation} Continuing, define $z_{S^*}$ to be the solution of $$(\partial_t - d_{S^*}\Delta)z_{S^*} = b(x) + \nu I^* + \nu R^*.$$ Then \begin{align*}(\partial_t - d_S)S +& (\partial_t - d_I\Delta)I + (\partial_t - d_R\Delta)R + (\partial_t - d_{S^*}\Delta)(S^*-z_{S^*}) \\ &= -\beta S^*((1-\alpha)I+I^*) - \mu(I+R)N^* - \delta(S+I+R) \le 0.\end{align*} Thus $$(\partial_t - d_{S^*}\Delta)(S^*-z_{S^*}) \le -[\heat S + \heat I + \heat R] $$ whereupon $$\pnorm{S^*-z_{S^*}}p{\Omega_t} \le C(1+ \pnorm S p {\Omega_t}+\pnorm I p {\Omega_t}+\pnorm R p {\Omega_t}).$$ Equations \eqref{eq:Spnorm}-\eqref{eq:Rpnorm} then give \begin{equation} \label{eq:Sspnorm}\pnorm{S^*}p{\Omega_t}^p \le C(1 +  \pnorm{z_S}p{\Omega_t}^p + \pnorm{z_I}p{\Omega_t}^p + \pnorm{z_R}p{\Omega_t}^p + \pnorm{z_{S^*}}p{\Omega_t}^p).\end{equation} Next we let $z_{I^*}$ solve $$(\partial_t -d_{I^*}\Delta)z_{I^*} = b(x) + \nu R^*,$$ so that \begin{align*} \heat S + \heat I &+ \heat R + \heat {S^*} + (\partial_t - d_{I^*}\Delta)(I^* - z_{I^*}) \\ &= - \mu RN^* -\delta(S+I+R+S^*+I^*) \le 0. \end{align*} Hence, applying lemma \ref{lem:keyLemma} and equations \eqref{eq:Spnorm}-\eqref{eq:Sspnorm}, we arrive at \begin{equation} \label{eq:Ispnorm}\pnorm{I^*}p{\Omega_t}^p \le C(1 +  \pnorm{z_S}p{\Omega_t}^p + \pnorm{z_I}p{\Omega_t}^p + \pnorm{z_R}p{\Omega_t}^p + \pnorm{z_{S^*}}p{\Omega_t}^p + \pnorm{z_{I^*}}p{\Omega_t}^p).\end{equation} Lastly, if $z_{R^*}$ satisfies $$(\partial_t - d_{R^*}\Delta)z_{R^*} = b(x),$$ then \begin{equation*} \begin{split} &\heat S + \heat I + \heat R + \\   &\heat {S^*} + \heat{I^*} + (\partial_t - d_{R^*}\Delta)(R^*-z_{R^*}) \end{split} = -\delta(S+I+R+S^*+I^*+R^*) \le 0,\end{equation*} so that lemma \ref{lem:keyLemma} and equations \eqref{eq:Spnorm}-\eqref{eq:Ispnorm} lead to \begin{equation}  \label{eq:Rspnorm} \pnorm{R^*}p{\Omega_t}^p \le C \left( \begin{split} 1 +&\pnorm{z_S}p{\Omega_t}^p  + \pnorm{z_I}p{\Omega_t}^p + \pnorm{z_R}p{\Omega_t}^p \\ &+ \pnorm{z_{S^*}}p{\Omega_t}^p + \pnorm{z_{I^*}}p{\Omega_t}^p + \pnorm{z_{R^*}}p{\Omega_t}^p \end{split} \right).  \end{equation} 

Now we need to ``close the loop" by establishing bounds on the auxiliary functions $z_X$ in terms of our original functions. To do so, define $$Z(t) = \pnorm{z_S}p{\Omega_t}^p + \pnorm{z_I}p{\Omega_t}^p + \pnorm{z_R}p{\Omega_t}^p + \pnorm{z_{S^*}}p{\Omega_t}^p + \pnorm{z_{I^*}}p{\Omega_t}^p + \pnorm{z_{R^*}}p{\Omega_t}^p$$ and $$P(t) = \pnorm{S}p{\Omega_t}^p+ \pnorm{I}p{\Omega_t}^p +\pnorm{R}p{\Omega_t}^p +\pnorm{S^*}p{\Omega_t}^p +\pnorm{I^*}p{\Omega_t}^p +\pnorm{R^*}p{\Omega_t}^p.$$ Then equations \eqref{eq:Spnorm}-\eqref{eq:Rspnorm} show that \begin{equation} \label{eq:Nineq}P(t) \le C(1+Z(t)). \end{equation} On the other hand, each of the functions $z_X$ satisfies an equation of the form $$(\partial_t - d_X\Delta)z_X = Cb(x) + \ell(S,I,R,S^*,I^*,R^*)$$ where $\ell$ is a linear function of $(S,I,R,S^*,I^*,R^*)$. Thus by lemma \ref{lem:boundLp}, each $z_X$ satisfies a bound of the form \begin{equation*} \begin{split} \pnorm{z_X(t)}p\Omega \le C\bigg(1 + \int^t_0 \big( &\pnorm{S(\tau)}p{\Omega}+ \pnorm{I(\tau)}p{\Omega} +\pnorm{R(\tau)}p{\Omega} \\ &+ \pnorm{S^*(\tau)}p{\Omega} +\pnorm{I^*(\tau)}p{\Omega} +\pnorm{R^*(\tau)}p{\Omega} \big) d\tau\bigg). \end{split} \end{equation*} Taking the $p^{\text{th}}$ power, using Jensen's inequality, and integrating in $t$ yields $$\pnorm{z_X}p{\Omega_t} \le C\left(1+\int^t_0 P(s)ds\right)$$ whereupon summing all these bounds gives \begin{equation}\label{eq:Zineq} Z(t) \le C\left(1+\int^t_0P(s)ds\right).\end{equation} Inserting \eqref{eq:Zineq} into \eqref{eq:Nineq}, we see that $$P(t) \le C\left(1+\int^t_0P(s)ds\right).$$ Finally, an application of Gr\"onwall's inequality shows that $P(t)$ remains bounded, and thus the $p$-norms of each of $S,I,R,S^*,I^*,R^*$ remain bounded. \end{proof}
	
	We note in particular that this result holds for every $p \in (1,\infty)$. Using this and a classical result on parabolic regularity, we can prove boundedness of solutions to \eqref{eq:SIRwithCompliance} using the Sobolev embedding theorem. \\
	
	\begin{theorem} Suppose that $S(\cdot,t),I(\cdot,t),R(\cdot,t),S^*(\cdot,t),I^*(\cdot,t),R^*(\cdot,t)$ are classical solutions of \eqref{eq:SIRwithCompliance} on $[0,T^*)$ with nonnegative initial data $S_0,I_0,R_0,S^*_0,I^*_0,R^*_0 \in C(\overline \Omega)$. If $T^*$ is finite, then there exists a constant $M > 0$ (depending on $T^*$) such that $$\|S\|_{L^\infty(\Omega_t)}, \|I\|_{L^\infty(\Omega_t)}, \|R\|_{L^\infty(\Omega_t)}, \|S^*\|_{L^\infty(\Omega_t)}, \|I^*\|_{L^\infty(\Omega_t)}, \|R^*\|_{L^\infty(\Omega_t)}\le M$$  for all $t \in [0,T^*)$. In particular, this implies global existence of classical solutions of \eqref{eq:SIRwithCompliance}. \end{theorem}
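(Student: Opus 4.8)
The plan is to bootstrap the space-time $L^p$ bounds supplied by Theorem \ref{thm:LpBound} up to an $L^\infty$ bound using parabolic maximal regularity followed by the parabolic Sobolev embedding theorem, and then to invoke the continuation criterion of \cite[Lemma 1.1]{pierre}. Write $\Omega_{T^*} = \Omega \times [0,T^*)$, and for each compartment $X \in \{S,I,R,S^*,I^*,R^*\}$ denote by $f_X$ the right-hand side of the corresponding equation in \eqref{eq:SIRwithCompliance}, so that $\heat{X} = f_X$. The key structural observation is that each $f_X$ is at most quadratic in the unknowns: it is a linear combination of the bounded birth term $b$, linear terms in the populations, and mass-action products such as $SI$, $SI^*$, and $SN^*$. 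Theorem \ref{thm:LpBound} gives $\pnorm{X}p{\Omega_{T^*}} \le M$ for every $p \in (1,\infty)$ and every $X$, so H\"older's inequality (for instance $\pnorm{SI}q{\Omega_{T^*}} \le \pnorm{S}{2q}{\Omega_{T^*}}\pnorm{I}{2q}{\Omega_{T^*}}$) shows that every such product, and hence each $f_X$, lies in $L^q(\Omega_{T^*})$ for every $q \in (1,\infty)$, with norm controlled by a constant depending only on the data and $T^*$.

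I would then apply, to each scalar equation $\heat{X} = f_X$ separately, the classical $L^q$ maximal regularity theory for the linear Neumann heat equation already invoked for Lemma \ref{lem:keyLemma} (see \cite{Krylov,LadyEtAl,WuYinWang}). This places each $X$ in the anisotropic parabolic Sobolev space $W^{2,1}_q(\Omega_{T^*})$, with an estimate of the form $$\norm{X}_{W^{2,1}_q(\Omega_{T^*})} \le C\left(\pnorm{f_X}{q}{\Omega_{T^*}} + \norm{X(\cdot,0)}_*\right),$$ where $\norm{\cdot}_*$ is the appropriate initial-trace norm. Since the $L^q$ norms of $f_X$ are bounded for all $q$, I am free to take $q$ as large as I like; choosing $q > n+2$ and applying the parabolic Sobolev embedding $W^{2,1}_q(\Omega_{T^*}) \hookrightarrow C(\overline{\Omega_{T^*}})$ yields $X \in L^\infty(\Omega_{T^*})$ with a bound depending only on the data and $T^*$. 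Carrying this out for all six compartments produces the uniform constant $M$ claimed in the statement.

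The step I expect to require the most care — and the only genuine subtlety, since the hard analytic work is already contained in Theorem \ref{thm:LpBound} — is the initial-trace term $\norm{X(\cdot,0)}_*$: the natural trace space for $W^{2,1}_q$ is a Besov space into which merely continuous data $X(\cdot,0) \in C(\overline\Omega)$ need not embed once $q$ is large. I would sidestep this by exploiting interior-in-time smoothing: fix any $t_0 \in (0,T^*)$, at which the classical solution is smooth, restart \eqref{eq:SIRwithCompliance} from the smooth data $X(\cdot,t_0)$, and run the maximal-regularity/embedding argument on $\Omega \times [t_0,T^*)$ to bound $X$ in $L^\infty$ there; on the complementary compact slab $\overline\Omega \times [0,t_0]$ the continuous solution is automatically bounded, so taking the larger of the two bounds gives $M$ uniformly on $[0,T^*)$. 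With this uniform $L^\infty$ bound in hand, the continuation criterion of \cite[Lemma 1.1]{pierre} applies: finite-time blow-up of the sup norm is the only obstruction to extending the solution, so the bound forces $T^*$ to be infinite, establishing global existence of classical solutions.
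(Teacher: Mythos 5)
Your proposal follows essentially the same route as the paper: both take the space-time $L^p$ bounds of Theorem \ref{thm:LpBound}, observe that the quadratic right-hand sides therefore lie in every $L^q(\Omega_t)$, apply linear parabolic regularity for the Neumann heat equation, and conclude via Sobolev embedding for large exponent, then invoke the continuation criterion of \cite[Lemma 1.1]{pierre}. Your treatment is in fact a bit more careful than the paper's on two points — you use the full anisotropic space $W^{2,1}_q$ with the parabolic embedding, and you handle the incompatibility of merely continuous initial data with the $W^{2,1}_q$ trace space by restarting at a positive time $t_0$ — whereas the paper states the regularity estimate with only first derivatives and an $L^p$ norm of the initial data; these are refinements of, not departures from, the same argument.
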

	
	\begin{proof} Using results on parabolic regularity (see \cite[Chap. 2, Sec. 4]{Krylov}, \cite[Chap. 4, Sec. 9]{LadyEtAl}, \cite[Chap. 9, Sec. 2]{WuYinWang}), for any $X \in \{S,I,R,S^*,I^*,R^*\}$, we can achieve the following bound for any $t \in [0,T^*]$: \begin{equation} \label{eq:gradBounds1}
\|\partial_t X\|_{L^p(\Omega_t)} + \|\nabla X\|_{L^p(\Omega_t)} \le C(\|X_0\|_{L^p(\Omega)} + \|F(S,I,R,S^*,I^*,R^*)\|_{L^p(\Omega_t)})
\end{equation} where $F$ is the right hand side of the corresponding equation. In particular, $F$ satisfies \begin{align*}\abs{F(S,I,R,S^*,I^*,R^*)} \le C(1+ &S+I+R+S^*+I^*+R^*\\&+S^2 + I^2 + R^2 + (S^*)^2 + (I^*)^2 + (R^*)^2),\end{align*} so that \begin{align*}\|F(S,I,R,S^*,I^*,R^*)\|_{L^p(\Omega_t)} \le C(1 + &\pnorm{S}{p}{\Omega_t}  +  \pnorm{S}{2p}{\Omega_t}^2\\ &+ \pnorm{I}{p}{\Omega_t}  +  \pnorm{I}{2p}{\Omega_t}^2 +\cdots).\end{align*} Thus theorem \ref{thm:LpBound} and equation \eqref{eq:gradBounds1} show that each of $S,I,R,S^*,I^*,R^*$ remain bounded in $W^{1,p}(\Omega_t)$ uniformly in $t \in [0,T^*)$ for any $p \in (1,\infty)$. Taking $p$ large enough, the Sobolev embedding theorem provides the same bounds in $L^\infty(\Omega_t)$. \end{proof}

\textit{Remark.} In fact, similar methods are used to achieve global existence for systems of parabolic equations in \cite{morgan}, where the author achieves bounds which are uniform in time, as opposed to the above bounds which depend on $T$. We will need uniform bounds on certain populations when we prove theorem \ref{t.w11191} later. However, because we would like bounds which are partially quantitative, we derive the bounds using Green's functions. \\

 With this, we move on to analysis of long time behavior and stability of steady state solutions. 

		\section{Basic reproduction number and stability of the disease-free steady states}\label{s4}

    In this section, we would like to establish long term behavior of \eqref{eq:SIRwithCompliance} in different parameter regimes. Following \cite{WZ}, for the steady-state analysis, we define $u = (u_1,u_2,u_3,u_4,u_5,u_6) = (I,I^*,S,S^*,R,R^*)$ and rewrite \eqref{eq:SIRwithCompliance} in the form \begin{equation} \label{eq:newVersionSIR} 
        (\partial_t - D\Delta)u = \mathcal F(x,u) - \mathcal V^{-}(x,u) + \mathcal V^+(x,u)
        \end{equation} where $D$ is a diagonal matrix containing the diffusion coefficients, and the functions $\mathcal F, \mathcal V^-, \mathcal V^+$ account for (respectively) the introduction of new infections into compartments, the transfer out of compartments due to infection, death, recovery, or infection with noncompliance, and the transfer into compartments due to birth or infection with noncompliance. Specifically, for our model,  \begin{equation}
		\mathcal F(x,u)=
		\begin{pmatrix}
		\beta (1-\alpha)u_3((1-\alpha)u_1 + u_2) \\
		\beta u_4((1-\alpha)u_1 + u_2)\\
		0\\
		0\\
		0\\
		0
		\end{pmatrix}
		=:
		\begin{pmatrix}
		\F_1(x,u)\\
		\F_2(x,u)\\
		0\\
		0\\
		0\\
		0
		\end{pmatrix},
		\end{equation}
		
		\begin{equation}
		\V^-(x,u)=
		\begin{pmatrix}
		(\gamma+\delta)u_1 + \mu u_1(u_2 + u_4 + u_6)\\
		(\gamma+\delta+\nu)u_2\\
		 \beta (1-\alpha)u_3((1-\alpha)u_1 + u_2) +\mu u_3(u_2+u_4+u_6) + \delta u_3\\
		\beta u_4((1-\alpha)u_1 + u_2)+ (\nu + \delta) u_4\\
	    \mu u_5(u_2+u_4+u_6) +\delta u_5\\
		(\nu + \delta)u_6
		\end{pmatrix} =:
		\begin{pmatrix}
		\V^-_1(x,u)\\
		\V^-_2(x,u)\\
		\V^-_3(x,u)\\
		\V^-_4(x,u)\\
		\V^-_5(x,u)\\
		\V^-_6(x,u)
		\end{pmatrix},
		\end{equation}
  and 
    \begin{equation}
		\V^+(x,u)=
		\begin{pmatrix}
		\nu u_2\\
		\mu u_1 (u_2+u_4+u_6)\\
		\xi b(x) + \nu u_4\\
		(1-\xi)b(x) + \mu u_3(u_2+u_4+u_6)\\
		\gamma u_1 + \nu u_6\\
		\gamma u_2 + \mu u_5(u_2+u_4+u_6)
		\end{pmatrix} =:
		\begin{pmatrix}
		\V^+_1(x,u)\\
		\V^+_2(x,u)\\
		\V^+_3(x,u)\\
		\V^+_4(x,u)\\
		\V^+_5(x,u)\\
		\V^+_6(x,u)
		\end{pmatrix}.
		\end{equation}
        
        We reiterate that $(u_1,u_2,u_3,u_4,u_5,u_6) = (I,I^*,S,S^*,R,R^*)$ so that the equations are rearranged from \eqref{eq:SIRwithCompliance} to include the infected compartments first. Also, since $\mathcal V^-$ is subtracted from the right hand side in \eqref{eq:newVersionSIR}, each of $\mathcal F, \mathcal V^-, \mathcal V^+$ are componentwise nonnegative functions. In what follows, inequalities with vectors will always be interpreted componentwise.
        
        With this \cite{WZ} provide a general framework for local asymptotic stability of disease-free steady state solutions; those in which $I=I^*=0$ (or $u_1 = u_2 = 0$ in the new notation). Using the framework of \cite{WZ}, we define $U_s$ to be the set of all disease-free states:
        \begin{equation}
        U_s:= \{u\geq 0: u_i=0,  i=1,2\}.
        \end{equation}
		The following assumptions from \cite{WZ} are easily verified for our system. 
		\begin{itemize}\label{a.w12191}
        \item [(A1)] For $i = 1,2$, functions $\F_i (x, u), \V_i^+(x, u)$, $\V_i^-(x, u)$ are nonnegative, continuous and continuously differential with respect to $u$.
         \item [(A2)] If $u_i =0$, then $\V_i^- =0$, for $i=1,\ldots,6$. In particular, if $u\in U_s$, then $\V_i^- =0$ for $i=1,2$. 
         \item [(A3)] $\F_i =0$ for $i>2$.
         \item [(A4)] If $u \in U_s$ (so that $u_1=u_2=0$), then $\F_i =\V_i^+ =0$ for $i=1,2$.
\end{itemize}

Along with these four general properties regarding the system, one must consider two more properties regarding linearization about the specific steady state one wishes to analyze. Specifically, suppose that $\tilde u(x)$ is a disease-free steady state solution of \eqref{eq:newVersionSIR}; that is, \begin{equation} \label{eq:newVersonSteady}
-D \Delta \tilde u = \mathcal F(x,\tilde u) - \mathcal V^{-}(x,\tilde u) + \mathcal V^+(x,\tilde u).
\end{equation} \WW{Now, once again following the notation of \cite{WZ}, we linearize \eqref{eq:newVersionSIR} around the disease-free steady state $\tilde u(x)$. Due to assumptions (A2)-(A4), since $\tilde u$ takes values in $U_s$, we have {\renewcommand{\arraystretch}{1.5} $$ [D_u(\V^+-\V^-)](x,\tilde u) = \left[\begin{array}{r|c} \tilde V(x) & 0  \\ \hline \tilde J(x) & -\tilde M(x) \end{array}\right].$$} Thus we arrive at the linearized system \begin{equation}\label{eq:linearizedVersion} \begin{split}
(\partial_t - D_{1:2} \Delta)u_{1:2} &= (\tilde F(x) - \tilde V(x)) u_{1:2}, \\
(\partial_t - D_{3:6}\Delta) u_{3:6} &= -\tilde J(x) u_{1:2} + \tilde M(x)u_{3:6}.
\end{split}
\end{equation} Here $\tilde F$ and $\tilde V$ are $2\times 2$ matrices corresponding to transfer in and out of the infected compartments respectively, $\tilde M$ is a $4\times 4$ matrix corresponding to movement within non-infected compartments, and $\tilde J(x)$ is a $4 \times 2$ matrix corresponding to transfer from non-infected to the infected compartments. The matrix $\tilde J(x)$ turns out to be less important for the analysis since its effects are also captured by $\tilde F(x)$.} Specifically, \begin{equation} \label{eq:Vmat}\tilde V(x) = \left[ \partder{\V^-_i(x,\tilde u(x))}{u_j} - \partder{\V^+_i(x,\tilde u(x))}{u_j}\right]_{1\le i,j \le 2}\end{equation} and \begin{equation} \label{eq:Mmat}
\tilde M(x) = \left[ \partder{\V^+_i(x,\tilde u(x))}{u_j} - \partder{\V^-_i(x,\tilde u(x))}{u_j}\right]_{3\le i,j \le 6}.
\end{equation} With all this, the final two assumptions are \begin{itemize}
\item[(A5)] $\tilde M(x)$ is cooperative, and all eigenvalues of $\tilde M(x)$ have negative real part,
\item[(A6)] $-\tilde V(x)$ is cooperative, and all eigenvalues of $-\tilde V(x)$ have negative real part.
 \end{itemize} Recall, a matrix is called \textit{cooperative} if all off-diagonal elements are nonnegative. 

Assuming we can verify that disease-free steady states for our system verify these properties, we can invoke \cite[Theorem 3.1]{WZ} to prove local asymptotic stability under the further condition that the basic reproductive number $\mathcal R_0$ corresponding to the steady state (which we will define in the ensuing subsections) is sufficiently small. With this, we move on to consider disease-free steady states for our system. The local stability analysis fits in to the general framework presented above. However, the global stability analysis is quite delicate, relying nontrivially on different parameter values. 
 
		\subsection{Noncompliant Disease-Free Equilibrium} \label{sec:nonCompDFE}
		In this section, we consider stability analysis of the  disease-free model and determine the reproduction number $\R^*_0$ in the case that all new individuals introduced are noncompliant with prevention measures. That is, we set $\xi = 0$, so that our model reads
		\begin{equation} \label{e.w09141}
		 \begin{split}
		\heat S &= -\beta (1-\alpha)S((1-\alpha)I + I^*) - \mu SN^* + \nu S^*  - \delta S,\\
		\heat I &= \beta (1-\alpha)S((1-\alpha)I + I^*) - \gamma I - \mu IN^* + \nu I^* - \delta I, \\
		\heat R &= \gamma I - \mu RN^* + \nu R^*- \delta R,\\
		\heat {S^*} &= b(x) -\beta S^*((1-\alpha)I + I^*) + \mu SN^* - \nu S^* - \delta S^*,\\
		\heat {I^*} &= \beta S^*((1-\alpha)I + I^*) - \gamma I^* +\mu IN^* - \nu I^* - \delta I^*, \\
		\heat {R^*}&= \gamma I^* + \mu RN^* - \nu R^* - \delta R^*,
		\end{split}
		\end{equation}
		 Since we are considering a disease-free equilibrium, we set $S=I=R=I^{*}=R^{*}= 0$ in equation \eqref{e.w09141} to arrive at
		\begin{equation}\label{e.w07291} \begin{split}
		(\partial_t- d_{S^*}\Delta )S^* &=b(x) - (\nu+\delta ) S^*, \\
		\frac{\partial S^*}{\partial n}&=0~\text{on}~\partial \Omega. \end{split}
		\end{equation}
		We let $\tilde S^{*}(x)$ be the unique, positive, steady-state solution of \eqref{e.w07291}.  We call $\tilde E^{*}=(0,0,0,\tilde S^{*},0,0)$ the noncompliant disease-free state of \eqref{eq:SIRwithCompliance}. Linearizing the $I^*$ equation in \eqref{e.w09141} around $\tilde E^{*}$ yields
		\begin{equation} \label{eq:IstarEq}
		\begin{split}
		(\partial_t - d_I^* \Delta) I^* 
		&=
		(\beta \tilde S^*- (\gamma   +\nu + \delta)) I^*,\\
		\frac{\partial I^*}{\partial n}&=0~\text{on}~\partial \Omega.
        \end{split} 
		\end{equation}
		Note that if we linearize the $I$ equation from \eqref{e.w09141} around $\tilde E^*$, we immediately see exponential decay of $I$, so in the linearized regime, the only appearance of new infections comes from \eqref{eq:IstarEq}. To arrive at the eigenvalue problem for \eqref{eq:IstarEq}, we use the ansatz $I^*(x,t)=e^{\lambda t}\varphi^*(x)$ and we have
		\begin{equation}\label{e.w07292}
		\begin{split}
		\lambda \varphi^*
		=
		d_I^* \Delta \varphi^*
		+
		(\beta \WW{\tilde S^*}- (\gamma   +\nu + \delta)) \varphi^*,
		\end{split}
		\end{equation}
		By the Krein-Rutman theorem, equation \eqref{e.w07292} has a principal eigenvalue $\lambda^{*}(\tilde S^*)$ given by the variational formula
		\begin{equation}\label{e.w07293}
		\begin{split}
		\lambda^{*}(\WW{\tilde S^*})
		&=-\inf \left\{\int_\Omega \left(d_{I^*} |\nabla \varphi^*|^2 + \left((\gamma   +\nu + \delta) - \beta \tilde S^* \right) |\varphi^*|^2 \right)\,dx: \varphi^* \in H^1(\Omega),
		~\int_\Omega |\varphi^*|^2\,dx=1 \right\}.
		\end{split}
		\end{equation} Intuitively, the sign of this eigenvalue will determine whether $I^*$ is locally increasing or decreasing in time. For standard SIR-type analysis, this same property is often phrased in terms of the basic reproduction number $\R_0^*$ which we define by
		\begin{equation}\label{e.w07294}
		\begin{split}
		\R_0^*
		=
		\sup_{0 \neq \varphi^* \in H^1(\Omega)} \left\{\frac{\int_\Omega \beta \tilde S^* |\varphi^*|^2 \,dx}{\int_\Omega d_{I^*} |\nabla \varphi^*|^2 + (\gamma   +\nu + \delta) |\varphi^*|^2 \,dx }  \right\}
		\end{split}
		\end{equation}
        The classical interpretation of $\R^*_0$ is that it represents the average number of new infections which result from a single infection near the outset of the epidemic. Accordingly, we expect the total number of infections to increase if $\R^*_0 > 1$ and decrease if $\R^*_0 < 1$, so given the above comment regarding $\lambda^*(\tilde S^*)$, we expect some connection between the size of $\R^*_0$ relative to $1$ and the sign of $\lambda^*(\tilde S^*).$ Indeed, this connection is provided by \cite[Theorem 3.1]{WZ}. 

        As stated above, assumptions (A1)-(A4) are easily seen to hold for our system, regardless of the value of $\xi$. Linearizing around $\tilde E^*$ and writing in the notation of \eqref{eq:linearizedVersion}, we have $$
		\tilde M^*(x)=
		\begin{pmatrix}
		-\mu \tilde S^{*} -\delta &\nu &0&0\\
		\mu \tilde S^{*} &-\nu - \delta &0&0\\
		0 &0&-\mu \tilde S^{*}-\delta&\nu\\
		0 &0 &\mu \tilde S^{*}&-\nu - \delta, \\
		\end{pmatrix}
		\,\,\,\, \text{ and } \,\,\,\,
		\tilde V^*(x)=
		\begin{pmatrix}
		\gamma+\delta+\mu \tilde S^{*} & -\nu  \\
		-\mu \tilde S^{*}& \gamma+\delta+\nu \\
		\end{pmatrix}.
		$$ These matrices satisfy (A5) and (A6): both $\tilde M(x)$ and $-\tilde V(x)$ are clearly cooperative, and their eigenvalues have negative real part by Gershgorin's theorem since they are (columnwise) diagonally dominant with negative diagonal entries. Thus we have the following local stabiliy result as a consequence of \cite[Theorem 3.1]{WZ}.
  
		\begin{lemma} \label{lem:localNonComp} As defined above, 
		$\R^*_0-1$ has the same sign as $\lambda^*(\tilde S^*)$. Furthermore, if $\R^*_0 < 1$, then $\tilde E^{*}$ is locally asymptotically stable. 
		\end{lemma}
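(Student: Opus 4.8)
The plan is to prove the two assertions separately: first the sign equivalence, which falls out of the two variational characterizations, and then local asymptotic stability, which I obtain by verifying the hypotheses of \cite[Theorem 3.1]{WZ} and exploiting the block structure of the linearization \eqref{eq:linearizedVersion}.

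For the sign equivalence, the key observation is that both $\R_0^*$ and $\lambda^*(\tilde S^*)$ are built from the same two quadratic forms on $H^1(\Omega)$: the strictly positive-definite form $Q(\varphi) = \int_\Omega (d_{I^*}|\nabla \varphi|^2 + (\gamma+\nu+\delta)|\varphi|^2)\,dx$ and the nonnegative form $B(\varphi) = \int_\Omega \beta \tilde S^* |\varphi|^2\,dx$. With this notation \eqref{e.w07294} reads $\R_0^* = \sup_{\varphi \neq 0} B(\varphi)/Q(\varphi)$ while \eqref{e.w07293} reads $\lambda^*(\tilde S^*) = -\inf\{Q(\varphi) - B(\varphi) : \|\varphi\|_{L^2(\Omega)} = 1\}$. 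Since $Q$ is strictly positive on $H^1(\Omega)\setminus\{0\}$ and both forms are $2$-homogeneous, I would argue that $\R_0^* > 1$ holds iff there is some $\varphi \neq 0$ with $B(\varphi) > Q(\varphi)$, i.e.\ with $(Q-B)(\varphi) < 0$; after normalizing to $\|\varphi\|_{L^2(\Omega)} = 1$ (legitimate by homogeneity) this is equivalent to $\inf_{\|\varphi\|_{L^2(\Omega)}=1}(Q-B) < 0$, hence to $\lambda^*(\tilde S^*) > 0$. Running the identical chain with $>$ replaced by $=$ and by $<$ yields $\R_0^* = 1 \Leftrightarrow \lambda^* = 0$ and $\R_0^* < 1 \Leftrightarrow \lambda^* < 0$, so $\R_0^* - 1$ and $\lambda^*(\tilde S^*)$ share a sign. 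The one structural fact that makes this clean is the strict positivity of $Q$, which keeps the Rayleigh quotient well-defined and licenses the cross-multiplication.

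For local asymptotic stability I would invoke \cite[Theorem 3.1]{WZ}. Assumptions (A1)--(A4) hold for the system regardless of $\xi$, and the explicit matrices $\tilde M^*(x)$ and $\tilde V^*(x)$ displayed above are cooperative with spectra in the left half-plane (by Gershgorin's theorem, being columnwise diagonally dominant with negative diagonal entries), so (A5)--(A6) hold at $\tilde E^*$. The mechanism behind the theorem is the block-triangular form of \eqref{eq:linearizedVersion}: the infected pair $u_{1:2} = (I,I^*)$ evolves autonomously under $D_{1:2}\Delta + (\tilde F - \tilde V)$, while the non-infected block $u_{3:6}$ is merely forced by $u_{1:2}$ and feeds back nowhere, so the spectrum of the full linearized generator is the union of the spectra of the two diagonal blocks. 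The non-infected block is stable because, by (A5), $\tilde M^*$ has all eigenvalues in the left half-plane and appending the Neumann Laplacian $D_{3:6}\Delta$ only pushes eigenvalues further left (constant modes reproduce the eigenvalues of $\tilde M^*$; nonconstant modes subtract positive multiples of Neumann eigenvalues). The infected block is stable precisely when the principal eigenvalue of $D_{1:2}\Delta + (\tilde F - \tilde V)$ is negative, which by \cite[Theorem 3.1]{WZ} happens exactly when the reproduction number of the infected subsystem falls below $1$; thus $\R_0^* < 1$ would force both blocks to be stable and $\tilde E^*$ to be locally asymptotically stable.

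I expect the main obstacle to be the reconciliation in that final step. The paper defines $\R_0^*$ through the scalar $I^*$-only eigenvalue problem \eqref{eq:IstarEq}--\eqref{e.w07294}, whereas the reproduction number of \cite{WZ} is built from the full two-dimensional infected subsystem $(I,I^*)$ via the next-generation operator associated with $\tilde F$ and $\tilde V$. To close the loop one must verify that these two thresholds agree in sign, equivalently that the principal eigenvalue of the coupled operator $D_{1:2}\Delta + (\tilde F - \tilde V)$ has the same sign as $\lambda^*(\tilde S^*)$. This is exactly where the structure flagged in the text enters: the new-infection matrix $\tilde F$ has vanishing first row (new infections feed only into $I^*$) and the linearized $I$-equation decays on its own, so by Krein--Rutman together with the cooperativity of $\tilde F - \tilde V$ the relevant next-generation operator reduces to the scalar $I^*$ problem whose principal eigenvalue is $\lambda^*(\tilde S^*)$. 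The remaining ingredients — the variational comparison and the Gershgorin/triangular-structure arguments — are routine.
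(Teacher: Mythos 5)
Your high-level architecture matches the paper's own proof: the paper establishes this lemma exactly by checking (A1)--(A6) for the displayed matrices $\tilde M^*,\tilde V^*$ (cooperativity plus Gershgorin) and then quoting \cite[Theorem 3.1]{WZ}, as in your second paragraph; your variational argument for the sign relation is a direct proof of a fact the paper simply inherits from that citation, and it is essentially sound. (One small caveat there: the ``identical chain'' does not literally run for the $=$ and $<$ cases, since the quantifier flips from ``there exists $\varphi$ with $B>Q$'' to ``for all $\varphi$, $B<Q$'' and the supremum need not be attained; you additionally need the coercivity $Q(\varphi)\ge(\gamma+\nu+\delta)\|\varphi\|_{L^2(\Omega)}^2$ and the bound $B(\varphi)\le \beta\|\tilde S^*\|_{\infty}\|\varphi\|_{L^2(\Omega)}^2$ on the unit sphere. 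Routine, but not automatic.)

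The genuine gap is in your final reconciliation step---and your instinct that this is the crux is exactly right, but the proposed resolution fails when $\nu>0$. The linearized infected block about $\tilde E^*$ is
\begin{gather*}
(\partial_t - d_I\Delta)I = -(\gamma+\delta+\mu\tilde S^*)I + \nu I^*, \\
(\partial_t - d_{I^*}\Delta)I^* = \bigl(\beta(1-\alpha)+\mu\bigr)\tilde S^*\, I + \bigl(\beta\tilde S^* - (\gamma+\delta+\nu)\bigr)I^*,
\end{gather*}
which is genuinely coupled: $I$ does \emph{not} ``decay on its own'' (it is forced by $\nu I^*$) and it feeds back into $I^*$, so the block is triangular only when $\nu=0$. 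The vanishing first row of $\tilde F$ does force the next-generation operator to have a one-dimensional range, but the resulting scalar is the $(2,2)$ entry of $\tilde F\tilde V^{-1}$, which sees the off-diagonal entries of $\tilde V$; it is not the scalar problem defining $\lambda^*(\tilde S^*)$. Concretely, in the constant-coefficient case one computes $\rho(\tilde F\tilde V^{-1}) = \frac{\beta\tilde S^*\,[(1-\alpha)\nu+\gamma+\delta+\mu\tilde S^*]}{(\gamma+\delta)(\gamma+\delta+\nu+\mu\tilde S^*)}$, not $\R_0^*=\beta\tilde S^*/(\gamma+\delta+\nu)$, and the two can straddle $1$: taking $\alpha=0$, $\gamma+\delta=1$, $\nu=1$, $\mu\tilde S^*=1$, $\beta\tilde S^*=3/2$ gives $\R_0^*=3/4<1$, yet the block matrix $\bigl(\begin{smallmatrix} -2 & 1 \\ 5/2 & -1/2 \end{smallmatrix}\bigr)$ has determinant $-3/2<0$, hence a positive principal eigenvalue and linear instability. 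So the step you call ``routine'' is false for $\nu>0$, and cooperativity/Krein--Rutman cannot rescue it. To be fair, the paper makes the same silent identification of its scalar $\R_0^*$ with the reproduction number of \cite{WZ}---its justification (``the linearized $I$ equation decays'') ignores the same $\nu I^*$ forcing---and the lemma is invoked downstream (Theorem \ref{thm:globalStabilityNonComp}) only under the hypothesis $\nu=0$, which is precisely the regime where the triangular structure makes your reduction, and the paper's, legitimate.
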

		
		We emphasize that this is only a local result, in the sense that if the solution $(S,I,R,S^*,I^*,R^*)$ begins near enough to $\tilde E^* = (0,0,0,\tilde S^*,0,0)$, then it will return to $\tilde E^*$. We also note that, by all appearances, this result does not depend significantly on the assumption $\xi = 0$ (meaning that any new members of the population are noncompliant). \WW{However, this assumption is necessary in achieving a more quantitative description of the leading eigenvalue $\lambda^*(\tilde S^*)$ in \eqref{e.w07293} and the reproductive ratio $\mathcal R^*_0$ in \eqref{e.w07294}. If $\xi \in (0,1)$, the equilibrium solution is a system of coupled nonlinear elliptic equations, whereupon it is more difficult to determine conditions for convergence to steady state. We comment more on this in section \ref{sec:conclusion}.}
  
		
		Next, we state our main result concerning the stability of $\tilde E^{*}=(0,0,0, \tilde S^{*},0,0)$. For global stability analysis, we assume that the diffusion coefficients do not depend on compliant status so that $d_X= d_{X^*}$ for $X \in \{S,I,R\}$. We also assume that $\nu = 0$, meaning that individuals who become noncompliant will remain noncompliant for all ensuing time. The first assumption allows us to control the nonlinear growth due to noncompliance by analyzing the sums $S+S^*$, $I+I^*$, and $R+R^*$ as in the proof of \ref{thm:globalExistenceBasic}. The latter ensures that the system does not stray from the equilibrium due to large portions of the populations becoming compliant. Within the context of this model, the population becoming compliant again would actually be helpful in slowing the progression of the disease, so this can be seen as a \emph{worst case scenario} assumption.
		\begin{theorem} \label{thm:globalStabilityNonComp}
			Under the conditions that $d_X= d_{X^*}$ for $X \in \{S,I,R\}$ and $\nu = 0$, the following statements hold regarding \eqref{e.w09141}.
			\begin{itemize}
				\item[(i)] If $\mathcal R_0^* <1$, then the disease-free steady state $\tilde E^{*}=(0,0,0,\tilde S^{*},0,0)$ is globally asymptotically stable.
				\item[(ii)] If $\mathcal R_0^* >1$, then there exists a constant $\epsilon_0 >0$ such that any positive solution of \eqref{e.w09141} satisfies
				\begin{equation}
				\limsup_{t\rightarrow \infty}
				\|(S,I,R,S^*,I^*,R^*)-(0,0,0,\tilde S^{*},0,0)\|_{L^\infty(\Omega)}>\epsilon_0.
				\end{equation}
			\end{itemize}
		\end{theorem}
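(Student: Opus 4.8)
The plan is to leverage the two standing assumptions to collapse the system onto the sums of paired compartments and then let the sign of the principal eigenvalue $\lambda^*(\tilde S^*)$ --- which by Lemma~\ref{lem:localNonComp} matches the sign of $\mathcal R_0^* - 1$ --- drive both dichotomies. Since $d_X = d_{X^*}$, setting $\nu = 0$ in \eqref{e.w09141} and adding the paired equations, in which the $\mu$-transfer terms cancel, produces the closed subsystem $(\partial_t - d_S\Delta)\Sigma = b(x) - \beta((1-\alpha)S + S^*)((1-\alpha)I + I^*) - \delta\Sigma$, $(\partial_t - d_I\Delta)\Phi = \beta((1-\alpha)S + S^*)((1-\alpha)I + I^*) - (\gamma+\delta)\Phi$, and $(\partial_t - d_R\Delta)\Upsilon = \gamma\Phi - \delta\Upsilon$, where $\Sigma = S+S^*$, $\Phi = I+I^*$, $\Upsilon = R+R^*$. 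The additional force of $\nu = 0$ is that the compliant compartments have no source: $\heat{S} \le -\delta S$, so the integrating factor $e^{\delta t}$ together with Lemma~\ref{lem:bound} gives $\|S(\cdot,t)\|_\infty \le e^{-\delta t}\|S_0\|_\infty \to 0$.

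For part (i), suppose $\mathcal R_0^* < 1$, so $\lambda^*(\tilde S^*) < 0$. Discarding the nonnegative nonlinearity in the $\Sigma$ equation yields $(\partial_t - d_S\Delta)\Sigma \le b(x) - \delta\Sigma$; comparing with the solution of $(\partial_t - d_S\Delta)w = b(x) - \delta w$, which converges uniformly to the steady state $\tilde S^*$ of \eqref{e.w07291} (here $d_S = d_{S^*}$ and $\nu = 0$ are used), gives $\limsup_{t\to\infty}\Sigma(\cdot,t) \le \tilde S^*$ uniformly. Fix small $\epsilon > 0$; for large $t$ we then have $\Sigma \le \tilde S^* + \epsilon$, and since $(1-\alpha)S + S^* \le \Sigma$ and $(1-\alpha)I + I^* \le \Phi$, the $\Phi$ equation obeys $(\partial_t - d_I\Delta)\Phi \le (\beta(\tilde S^* + \epsilon) - (\gamma+\delta))\Phi$. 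The operator on the right differs from the one defining $\lambda^*(\tilde S^*)$ only by the additive constant $\beta\epsilon$, so its principal eigenvalue is exactly $\lambda^*(\tilde S^*) + \beta\epsilon$, still negative once $\epsilon < -\lambda^*(\tilde S^*)/\beta$; comparing $\Phi$ with a decaying multiple of the associated positive eigenfunction forces $\Phi \to 0$ exponentially, hence $I, I^* \to 0$. Then $(\partial_t - d_R\Delta)\Upsilon = \gamma\Phi - \delta\Upsilon$ drives $\Upsilon \to 0$ (so $R, R^* \to 0$), and with $\Phi \to 0$ the $\Sigma$ equation becomes asymptotically autonomous with limit $\tilde S^*$, whence $\Sigma \to \tilde S^*$ and $S^* = \Sigma - S \to \tilde S^*$. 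This is global attraction to $\tilde E^*$; combined with the local asymptotic stability from Lemma~\ref{lem:localNonComp}, it gives global asymptotic stability.

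For part (ii), suppose $\mathcal R_0^* > 1$, so $\lambda^*(\tilde S^*) > 0$, and set $\epsilon_0 = \lambda^*(\tilde S^*)/(2\beta)$. Assume toward a contradiction that some positive solution satisfies $\limsup_{t\to\infty}\|(S,I,R,S^*,I^*,R^*) - \tilde E^*\|_\infty \le \epsilon_0$. Then for large $t$ we have $S^* \ge \tilde S^* - \epsilon$ for some $\epsilon$ slightly larger than $\epsilon_0$ but still below $\lambda^*(\tilde S^*)/\beta$, and dropping the nonnegative $\beta(1-\alpha)S^* I$ and $\mu I N^*$ terms, the $I^*$ equation satisfies $\heat{I^*} \ge (\beta(\tilde S^* - \epsilon) - (\gamma+\delta))I^*$. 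By the same constant-shift observation the principal eigenvalue of the right-hand operator is $\lambda^*(\tilde S^*) - \beta\epsilon > 0$; since the solution is positive, $I^*(\cdot, t)$ dominates a positive multiple of the positive principal eigenfunction at some time, and comparison forces $I^*$ to grow like $e^{(\lambda^*(\tilde S^*) - \beta\epsilon)t} \to \infty$, contradicting $\|\cdot - \tilde E^*\|_\infty \le \epsilon_0$. Hence this $\epsilon_0$ works uniformly over all positive solutions.

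I expect the global attraction in part (i) to be the main obstacle: the reduced system controls only the sums $(\Sigma, \Phi, \Upsilon)$, so the work is in converting the sharp one-sided bound $\limsup\Sigma \le \tilde S^*$ into strict negativity of the governing eigenvalue and then disentangling the individual compartments. Both hypotheses enter precisely here --- $d_X = d_{X^*}$ closes the $(\Sigma,\Phi,\Upsilon)$ subsystem, while $\nu = 0$ both makes $\tilde S^*$ the exact limit of $\Sigma$ and starves the compliant compartments so that $S, I, R \to 0$ --- and the clean fact that perturbing $\tilde S^*$ by a constant shifts the Neumann principal eigenvalue by exactly $\beta\epsilon$ is what makes the decay in (i) and the growth in (ii) rigorous without any delicate eigenvalue continuity estimate.
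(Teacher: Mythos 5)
Your proof is correct and follows essentially the same route as the paper's: exponential decay of $S$, convergence of the susceptible noncompliant population to $\tilde S^*$ from above, comparison of $\Phi = I+I^*$ against a principal eigenfunction with eigenvalue $\lambda^*(\tilde S^*) + \beta\eps < 0$ for part (i), and the same eigenfunction-growth contradiction via $I^*$ for part (ii). Your minor refinements---bounding the mass-action term by $\beta\Sigma\Phi$ with $\Sigma = S+S^*$ (avoiding the paper's extra integrating factor for the $\beta(1-\alpha)\eps$ term), noting the exact eigenvalue shift $\lambda^*(\tilde S^*+\eps) = \lambda^*(\tilde S^*)+\beta\eps$ in place of a continuity argument, exhibiting the explicit threshold $\eps_0 = \lambda^*(\tilde S^*)/(2\beta)$, and handling the final convergence $S^* \to \tilde S^*$ by asymptotic autonomy---are streamlinings of the same argument rather than a different approach.
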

	\begin{proof}

 First note that $$(\partial_t - d_S \Delta)S = -\beta (1-\alpha)S((1-\alpha)I+I^*) - \mu SN^* -\delta S \le -\delta S.$$ Upon using an integrating factor, lemma \ref{lem:bound} immediately yields $$\|S(t)\|_{L^\infty(\Omega)} \le \|S(0)\|_{L^\infty(\Omega)} e^{-\delta t}$$ so that $S\to 0$ uniformly as $t\to \infty$. With this, define $\Sigma^* = S^* - \tilde S^*$, the difference between $S^*(x,t)$ and the steady state solution $\tilde S^*(x)$. Then $$(\partial_t - d_S \Delta)\Sigma^* = -\beta S^*((1-\alpha)I+I^*)  - \delta \Sigma^* + \mu SN^*.$$ Adding the $S$ equation from \eqref{e.w09141} then yields \begin{align*}(\partial_t - d_S \Delta)(\Sigma^* + S) &= -\beta((1-\alpha)S+S^*)((1-\alpha)I+I^*) - \delta(\Sigma^* + S) \\&\le -\delta(\Sigma^* + S)\end{align*} whereupon, by the same reasoning as above, $$\|(\Sigma^*+S)(t)\|_{L^\infty(\Omega)} \le \|(\Sigma^*+S)(0)\|_{L^\infty(\Omega)}e^{-\delta t}$$ so that $\Sigma^* + S \to 0$ uniformly as $t\to \infty$. But then we use$$\|\Sigma^*(t)\|_{L^\infty(\Omega)} \le \|(\Sigma^* + S)(t)\|_{L^\infty(\Omega)} + \|S(t)\|_{L^\infty(\Omega)}$$ to see that $\Sigma^*$ also converges uniformly to zero as $t\to \infty$; that is $S^* \to \tilde S^*$ uniformly as $t\to \infty$. 
	
	In particular, since $S^* \to \tilde S^*$ and $S\to 0$ uniformly, for any $\eps > 0$, we can find $\tau> 0$ such that for all $x \in \Omega$, \begin{equation}\label{eq:Sbounds} S^*(x,t)\le \tilde S^*(x) +\eps \,\,\,\, \text{ and } \,\,\,\, S(x,t)\le \eps \,\,\,\, \text{ for } t \ge \tau. \end{equation}
	
	Next consider $\Phi = I + I^*$. For $t \ge \tau$, this function satisfies \begin{align*}(\partial_t - d_I \Delta) \Phi &= \beta ((1-\alpha)S + S^*)((1-\alpha)I+I^*) - (\delta + \gamma) (I+I^*) \\
	&=(\beta(1-\alpha)^2 S + \beta(1-\alpha)S^* - (\delta+\gamma))I + (\beta (1-\alpha)S + \beta S^* - (\delta + \gamma))I^*\\
	&\le  (\beta (1-\alpha)S + \beta S^* - (\delta + \gamma))\Phi\\
	&\le (\beta (1-\alpha)\eps + \beta(\tilde S^*+ \eps) - (\delta + \gamma))\Phi.
	\end{align*}
	Using an integrating factor we see that \begin{equation}\label{eq:PhiEqBound} (\partial_t - d_I \Delta)[\Phi e^{-\beta(1-\alpha)\eps}t] \le  (\beta (\tilde S^*+\eps)- (\delta+\gamma))[\Phi e^{-\beta(1-\alpha)\eps t}].\end{equation}
	Note that \eqref{eq:PhiEqBound} is akin to \eqref{eq:IstarEq}, but with the extra arbitrarily small $\eps$. By lemma \ref{lem:localNonComp}, $\R^*_0 < 1$ implies that the principle eigenvalue for \eqref{e.w07292} satisfies $\lambda^*(\tilde S^*) < 0$. Reducing $\eps$ if necessary, by continuity, we have $\lambda^*(\tilde S^*+\eps)<0$. Let $\phi_\eps^*$ be the strongly positive eigenfunction corresponding to $\lambda^*(\tilde S^*+\eps)$, and take a constant $A$ large enough that $\Phi(x,\tau)e^{-\beta(1-\alpha)\eps\tau} \le A\phi^*_\eps(x)$. Then by the comparison principle, \begin{equation} \label{eq:PhiSupBound}\Phi(x,t)e^{-\beta(1-\alpha)\eps t} \le A \phi^*_\eps(x) e^{\lambda^*(S^*_0+\eps) (t-\tau)} \,\,\,\, \implies \,\,\,\, \Phi(x,t) \le \tilde A \phi^*_\eps(x) e^{(\lambda^*(\tilde S^*+\eps)+\beta(1-\alpha)\eps)  t}, \,\,\,\,\,\, t \ge \tau. \end{equation} Again, reducing $\eps$ if necessary, we can achieve $\lambda^*(\tilde S^*+\eps)+\beta(1-\alpha)\eps<0$ whereupon \eqref{eq:PhiSupBound} shows that $I,I^* \to 0$ uniformly as $t\to\infty$.  
	Using this result, the equation for $\Upsilon = R+R^*$ is asymptotic to $$(\partial_t - d_R\Delta)\Upsilon = - \delta \Upsilon$$ which implies that $\Upsilon \to 0$ uniformly as well. Thus $(S,I,R,S^*,I^*R^*) \to (0,0,0,S^*_0,0,0)$ uniformly as $t\to \infty$. This proves \textit{(i)}.
	
	Next we prove \textit{(ii)}. Let $\mathcal R^*_0 > 1$ and assume toward a contradiction that for any $\eps_0 > 0$, there is a positive solution of \eqref{e.w09141} satisfying $$\limsup_{t\to\infty}\|(S,I,R,S^*,I^*,R^*)-(0,0,0,S_{0}^{*},0,0)\|\le\epsilon_0.$$ In this case, for any $\eps_0 > 0$, we can find $\tau > 0$ such that \begin{equation} \label{eq:limsupBound} 0 <  \abs{S(x,t)},\abs{I(x,t)}, \abs{I^*(x,t)}, \abs{R(x,t)}, \abs{R^*(x,t)} \le \eps_0, \,\,\,\,\, \text{ for all } x \in \Omega,  \,\, t \ge \tau.\end{equation} Then for $t \ge \tau$, $\Sigma^*(x,t) = S^*(x,t) - S^*_0(x)$ satisfies $$(\partial_t - d_S\Delta)\Sigma^* = -\beta((1-\alpha)S+S^*)-\delta \Sigma^* + \mu SN^* \le (\eps_0-\delta) \Sigma^* + 2\mu\eps_0^2.$$ This yields $$\| \Sigma^*(t)\|_{\infty} \le e^{(\eps_0 - \delta)t}\|\Sigma^*(0)\|_\infty + 2\mu \eps_0^2 t e^{(\eps_0 - \delta)t}, \,\,\,\,\,\,\, t \ge \tau.$$ However, since this holds for any fixed $\eps_0 > 0$, we can take $\eps_0 < \delta$ and this shows that $S^*(x,t) \to \tilde S^*(x)$ uniformly as $t \to \infty$. In particular, increasing $\tau$ if necessary, we have $$S^*(x,t) \ge \tilde S^*(x) - \eps_0, \,\,\,\,\,\ t \ge \tau.$$ But then for $t \ge \tau$, $$(\partial_t - d_I\Delta)I^* \ge \beta((\tilde S^* - \eps_0) - (\gamma + \delta))I^*.$$ Since $\mathcal R^*_0 > 1$, the principle eigenvalue for \eqref{e.w07292} satisfies $\lambda^*(\tilde S^*)>0$ and thus by continuity, there is $\eps_0>0$ small enough that $\lambda^*(\tilde S^*-\eps_0)>0$. Fixing this $\eps_0$ (and the corresponding $\tau > 0$), we let $\phi_{\eps_0}^*(x)$ be the positive solution of \eqref{e.w07292} corresponding to $\lambda^*(\tilde S^*-\eps_0)$, and take $\eta>0$ small enough that $I^*(x,\tau) \ge \eta \phi_{\eps_0}^*(x)$ for all $x \in \Omega$. Then by the comparison principle, $$I^*(x,t) \ge \eta \phi^*_{\eps_0}(x) e^{\lambda^*(\tilde S^*-\eps_0)(t-\tau)}$$ proving that $I^*(x,t)$ grows without bound as $t \to \infty$ which contradicts \eqref{eq:limsupBound}. The contradiction implies that when $\mathcal R^*_0>1$, there is $\eps_0 > 0$ such that any positive solution of \eqref{e.w09141} satisfies $$\limsup_{t\to\infty}\|(S,I,R,S^*,I^*,R^*)-(0,0,0,S_{0}^{*},0,0)\| >\epsilon_0$$ as desired.
	\end{proof}

 \textit{Remark.} We include a brief interpretation of Theorem \ref{thm:globalStabilityNonComp}. The interpretation of result \textit{(i)} is fairly straightforward: under the condition $\R^*_0 < 1$, the disease dies out as time increases. Result \textit{(ii)} establishes a condition under which $(S,I,R,S^*,I^*,R^*) \not\to (0,0,0,\tilde S^*,0,0)$ as time increases. Since the proof is by contradiction, it does not specify which of $(S,I,R,S^*,I^*,R^*)$ fails to converge to its corresponding equilibrium value. However, with some additional reasoning, it is easy to see that one of $\|I(t)\|_\infty,\|I^*\|_{\infty}$ does not tend to zero. Indeed, if both of these tended to zero, then reasoning as in the proof of \textit{(i)}, we would have $\|S(t)\|_{\infty} \to 0, \|(S^*-\tilde S^*)(t)\|_\infty, \|R(t)\|_{\infty}, \|R^*(t)\|_{\infty} \to 0$ whereupon we would return to the equilibrium point, which cannot occur. Thus \textit{(ii)} implies that when $\R^*_0 > 1$, the disease persists in the sense that $I+I^* \not\to 0$ for large time. 
 

		\subsection{Compliant Disease-Free Equilibrium} \label{sec:compDFE}
		In this section, we consider an equilibrium in which the entire population is compliant with prevention measures, and in which all newly introduced members are compliant ($\xi = 1$). That is, we consider the system of equations \begin{equation} \label{eq:allCompliant}
		 \begin{split}
		\heat S &= b(x)-\beta (1-\alpha)S((1-\alpha)I + I^*) - \mu SN^* + \nu S^*  - \delta S,\\
		\heat I &= \beta (1-\alpha)S((1-\alpha)I + I^*) - \gamma I - \mu IN^* + \nu I^* - \delta I, \\
		\heat R &= \gamma I - \mu RN^* + \nu R^*- \delta R,\\
		\heat {S^*} &=-\beta S^*((1-\alpha)I + I^*) + \mu SN^* - \nu S^* - \delta S^*,\\
		\heat {I^*} &= \beta S^*((1-\alpha)I + I^*) - \gamma I^* +\mu IN^* - \nu I^* - \delta I^*, \\
		\heat {R^*}&= \gamma I^* + \mu RN^* - \nu R^* - \delta R^*,
		\end{split}
		\end{equation}

        As above, we would like to establish stability for a disease-free equilibrium solution of this model. Specifically, setting $I=R=S^*=I^*=R^*=0$, we arrive at the equation \begin{equation} \label{eq:compliantSeq}
        \begin{split}
		(\partial_t- d_{S}\Delta )S &=b(x) - \delta S, \\
		\frac{\partial S}{\partial n}&=0~\text{on}~\partial \Omega. \end{split}
        \end{equation} Let $\tilde S(x)$ be the unique, positive, steady-state solution of \eqref{eq:compliantSeq}, so that $\tilde E = (\tilde S,0,0,0,0,0)$ is the disease (and noncompliance) free equilibrium solution of \eqref{eq:allCompliant}. Linearizing the $I$ equation from \eqref{eq:allCompliant} around $\tilde E$ gives
		\begin{equation} \label{eq:IeqLin}
		\begin{split}
		(\partial_t - d_I \Delta) I &=
		(\beta(1-\alpha)^{2}\tilde S - (\gamma + \delta)) I + \nu I^*,\\
        \frac{\partial I}{\partial n}&=0~\text{on}~\partial \Omega. 
		\end{split}
		\end{equation} In this case, the linearization of the $I^*$ equation from \eqref{eq:allCompliant} around $\tilde E$ yields exponential decay, so \eqref{eq:IeqLin} is asymptotic to \begin{equation} \label{eq:IeqLin2}
		\begin{split}
		(\partial_t - d_I \Delta) I &=
		(\beta(1-\alpha)^{2}\tilde S - (\gamma + \delta)) I,\\
        \frac{\partial I}{\partial n}&=0~\text{on}~\partial \Omega. 
		\end{split}
		\end{equation} Accordingly, using the ansatz $I(x,t) = e^{\lambda t}\varphi(x)$, we arrive at the eigenvalue problem \begin{equation}\label{e.w08012}
		\begin{split}
		\lambda \varphi=d_I \Delta \varphi+(\beta(1-\alpha)^{2}\tilde S- (\gamma + \delta)) \varphi.
		\end{split}
		\end{equation}

  By the Krein-Rutman theorem, \eqref{e.w08012} has a principal eigenvalue $\lambda(\tilde S)$ and it is be given by the variational formula
		\begin{equation}\label{e.w08013}
		\begin{split}
		\lambda(\tilde S)
		=-\inf \left\{\int_\Omega \left(d_{I} |\nabla \varphi|^2 + \left((\gamma   + \delta) - \beta(1-\alpha)^{2} \tilde S \right) |\varphi|^2 \right)\,dx: \varphi \in H^1(\Omega),~\int_\Omega |\varphi|^2\,dx=1 \right\}.
		\end{split}
		\end{equation}
		Again, in the linearized regime about $\tilde E$, $I^*$ will decay exponentially, so the only new infections are accounted for by $I$, which we expect to be locally increasing in time if $\lambda(\tilde S) > 0$ and locally decreasing in time if $\lambda(\tilde S) < 0.$ To capture this same behavior in the language of SIR-type models, we define the basic reproduction number 
		\begin{equation}\label{e.w08014}
		\begin{split}
		\R_0
		=
		\sup_{0 \neq \varphi \in H^1(\Omega)} \left\{\frac{\int_\Omega \beta(1-\alpha)^{2} \tilde S |\varphi|^2 \,dx}{\int_\Omega d_{I} |\nabla \varphi|^2 + (\gamma   + \delta) |\varphi|^2 \,dx }  \right\}.
		\end{split}
		\end{equation}

        With all this, we once again have local stability (and the relationship between $\lambda(\tilde S)$ and $\R_0$) as a consequence of \cite[Theorem 3.1]{WZ}. The verification of hypotheses (A1)-(A6) is essentially identical to that presented in section \ref{sec:nonCompDFE}. 

        \begin{lemma} \label{lem:localComp} As defined above, 
		$\R_0-1$ has the same sign as $\lambda(\tilde S)$. Furthermore, if $\R_0 < 1$, then $\tilde E$ is locally asymptotically stable. 
        \end{lemma}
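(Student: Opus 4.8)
The plan is to follow the template of Section~\ref{sec:nonCompDFE}: I would rewrite the equilibrium $\tilde E=(\tilde S,0,0,0,0,0)$ in the ordering of \eqref{eq:newVersionSIR} as $\tilde u=(0,0,\tilde S,0,0,0)$, form the linearization matrices $\tilde V,\tilde M$ from \eqref{eq:Vmat}--\eqref{eq:Mmat}, verify (A1)--(A6), and appeal to \cite[Theorem 3.1]{WZ}. The sign statement is purely variational and needs none of this machinery: comparing \eqref{e.w08013} and \eqref{e.w08014}, $\lambda(\tilde S)>0$ holds exactly when some $\varphi\in H^1(\Omega)$ satisfies $\int_\Omega (d_I|\nabla\varphi|^2+(\gamma+\delta)|\varphi|^2)\,dx<\int_\Omega \beta(1-\alpha)^2\tilde S|\varphi|^2\,dx$, i.e. exactly when $\R_0>1$, with the equality and reverse-inequality cases matching identically. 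Hence $\R_0-1$ and $\lambda(\tilde S)$ share a sign, verbatim as in Lemma~\ref{lem:localNonComp}.

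For the stability claim I would first record the matrices. Differentiating $\V^\pm$ at $\tilde u$ gives the infected block $\tilde V=\bigl(\begin{smallmatrix}\gamma+\delta & -\nu\\ 0 & \gamma+\delta+\nu\end{smallmatrix}\bigr)$, so $-\tilde V$ is cooperative with eigenvalues $-(\gamma+\delta),-(\gamma+\delta+\nu)<0$, giving (A6); and (A1)--(A4) are immediate from the explicit forms of $\F,\V^\pm$. The disease-free block, however, comes out to be
$$\tilde M=\begin{pmatrix} -\delta & \nu-\mu\tilde S & 0 & -\mu\tilde S\\ 0 & \mu\tilde S-(\nu+\delta) & 0 & \mu\tilde S\\ 0 & 0 & -\delta & \nu\\ 0 & 0 & 0 & -(\nu+\delta)\end{pmatrix},$$
upper triangular with diagonal $-\delta,\ \mu\tilde S-(\nu+\delta),\ -\delta,\ -(\nu+\delta)$.

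The hard part is (A5), and I expect it to fail as literally stated, in contrast to Section~\ref{sec:nonCompDFE}. There the compliant susceptible compartment vanished at equilibrium, so the social-contagion loss $\mu SN^*$ dropped out at linear order; here $u_3=\tilde S>0$ keeps it, producing the negative off-diagonal entry $-\mu\tilde S$ in $\tilde M$. Thus $\tilde M$ is not cooperative and \cite[Theorem 3.1]{WZ} cannot be invoked as a black box. Worse, its diagonal carries $\mu\tilde S-(\nu+\delta)$, which can be positive: a small noncompliant perturbation of the all-compliant state would then grow even with no disease present, so $\R_0<1$ by itself cannot give stability.

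I would close this gap by using the block-triangular structure of \eqref{eq:linearizedVersion} in place of cooperativity. The full linearized operator is lower block-triangular (the infected block feeds the disease-free block but not conversely), so its spectrum is the union of the spectra of $D_{1:2}\Delta+(\tilde F-\tilde V)$ and $D_{3:6}\Delta+\tilde M$. The first of these is itself upper triangular: the $I^*$-mode decays and the $I$-mode is governed by the scalar operator whose principal eigenvalue is $\lambda(\tilde S)$, so this block is stable precisely when $\R_0<1$. The second is upper triangular too, so its spectrum is the union of those of $d_S\Delta-\delta$, $d_{S^*}\Delta+(\mu\tilde S-(\nu+\delta))$, $d_R\Delta-\delta$, $d_{R^*}\Delta-(\nu+\delta)$; the first, third, and fourth are stable, and the second is stable exactly when the principal eigenvalue of $d_{S^*}\Delta+(\mu\tilde S(x)-(\nu+\delta))$ under the zero-flux condition is negative. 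The clean statement therefore needs the extra hypothesis that this noncompliance principal eigenvalue be negative --- a reproduction number for noncompliance below one, which holds in particular when $\mu\|\tilde S\|_{\infty}<\nu+\delta$. Granting it, $\tilde E$ is locally asymptotically stable whenever $\R_0<1$.
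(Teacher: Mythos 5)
Your computation of the linearization is correct, and it exposes a genuine error in the paper rather than a gap in your own argument. The paper proves this lemma by invoking \cite[Theorem 3.1]{WZ} with the remark that the verification of (A1)--(A6) is ``essentially identical'' to that of section \ref{sec:nonCompDFE}, but that is exactly where the two equilibria differ. At the noncompliant equilibrium, $S=0$ and $N^*=\tilde S^*>0$, so the contagion term $\mu S N^*$ linearizes to $\mu\tilde S^*\, s$: a transfer out of $S$ into $S^*$, which lands in $\tilde M^*$ as a nonnegative off-diagonal entry dominated by its diagonal, so cooperativity and Gershgorin stability survive. At the compliant equilibrium, $S=\tilde S>0$ and $N^*=0$, so the same term linearizes to $\mu\tilde S(s^*+i^*+r^*)$: it places the negative entries $\nu-\mu\tilde S$ and $-\mu\tilde S$ in the $S$-row of $\tilde M$ (destroying cooperativity) and deposits $+\mu\tilde S$ on the $S^*$-diagonal, producing the eigenvalue $\mu\tilde S-(\nu+\delta)$, which is positive whenever $\mu\tilde S>\nu+\delta$. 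So (A5) genuinely fails, \cite[Theorem 3.1]{WZ} cannot be applied as a black box, and your block-triangular spectral decomposition is the right substitute: the spectrum of the full linearization is the union of that of the infected block (stable iff $\lambda(\tilde S)<0$, i.e.\ $\R_0<1$) and of the four scalar Neumann operators on the diagonal of the disease-free block, one of which is $d_{S^*}\Delta+\mu\tilde S-(\nu+\delta)$.

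Moreover, the defect is in the statement, not merely in the proof. With constant $b$ one has $\tilde S=b/\delta$, and choosing $\beta$ small but $\mu b/\delta>\nu+\delta$ gives $\R_0<1$ while a pure noncompliance perturbation $s^*>0$ grows exponentially at the linear level; indeed the disease-free set $I=I^*=0$ is invariant and there the $(S,S^*)$ dynamics form an SIS system in which noncompliance is the infection, so $\tilde E$ is not locally asymptotically stable in this regime. Your repaired version---adding the hypothesis that the principal Neumann eigenvalue of $d_{S^*}\Delta+\mu\tilde S-(\nu+\delta)$ be negative, for which $\mu\|\tilde S\|_\infty<\nu+\delta$ suffices---is what the lemma should say, and it dovetails with the paper's own global result, Theorem \ref{t.w11191}(i), which imposes exactly this kind of smallness of $\mu$ and largeness of $\nu$; the local lemma should carry an analogous condition. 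Your variational argument for the sign relation between $\R_0-1$ and $\lambda(\tilde S)$ is fine and is independent of all of this.
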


        We would like to establish global stability, but as in section \ref{sec:nonCompDFE}, this is much more delicate. In this case, the most interesting (and complicating) facet of the analysis is the nonlinear growth of the noncompliant populations, which could potentially cause instability of the equilibrium solution $\tilde E = (\tilde S,0,0,0,0,0)$, even in the case that $\R_0 < 1$. If a large portion of the population becomes noncompliant, we are reverted to a situation similar to section \ref{sec:nonCompDFE}, where we have a larger reproductive number $\R_0^*$.  Thus, our global stability result in this case depends on first understanding the total size of the noncompliant population $N^* = S^*+I^*+R^*$ and then ensuring that the noncompliance transmission and recovery parameters $\mu$ and $\nu$ are such that $N^*$ does not grow too rapidly. 

        By \eqref{eq:massBound} and nonnegativity, we have the following $L^1$-bound on $N^*$: \begin{equation} \label{eq:Nmax}
        \|N^*(t)\|_{L^1(\Omega)} \le N_{\tot}(t) \le N_\tot(0)e^{-\delta t} + \frac{\|b\|_{L^1(\Omega)}}{\delta}. 
        \end{equation} Using this bound for $N^*$, we can state and prove our global stability result. 
        
		\begin{theorem}\label{t.w11191}
			 Under the conditions that $d_S = d_I = d_R = d$, $d_{S^*} = d_{I^*} = d_{R^*} = d^*$, the following statements hold.
			\begin{itemize}
				\item[(i)] There is a constant $c > 0$ depending on $d,\delta$ and the domain $\Omega$ such that if $\R_0 < 1$, \begin{equation} \label{eq:munubound}
                    \mu < \frac{\delta^2}{\|b\|_{L^1(\Omega)}}, \,\,\,\,\, \text{ and } \,\,\, \nu > \frac{c\mu\delta \|b\|_{L^1(\Omega)}}{
                    \delta^2 - \mu\|b\|_{L^1(\Omega)}}, 
                \end{equation} then the disease-free steady state $\tilde E=(\tilde S,0,0,0,0,0)$ is globally asymptotically stable for \eqref{eq:allCompliant}.
				\item[(ii)] If $\mathcal R_0 >1$, then there exists a constant $\epsilon_0 >0$ such that any positive solution of \eqref{eq:allCompliant} satisfies
				\begin{equation}
				\limsup_{t\rightarrow \infty}
				\|(S,I,R,S^*,I^*,R^*)-(\tilde S_{0},0,0,0,0,0)\|_{L^\infty(\Omega)}>\epsilon_0.
				\end{equation}
			\end{itemize}
		\end{theorem}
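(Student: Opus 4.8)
The plan is to show that under the stated hypotheses the noncompliant populations die out, after which \eqref{eq:allCompliant} collapses to the basic compliant SIR system and the $\R_0 < 1$ argument from Theorem \ref{thm:globalStabilityNonComp}(i) finishes the job. The equal-diffusion assumptions $d_S=d_I=d_R=d$ and $d_{S^*}=d_{I^*}=d_{R^*}=d^*$ let me add equations as in the proof of Theorem \ref{thm:globalExistenceBasic}: writing $N=S+I+R$ and $N^*=S^*+I^*+R^*$, the mass-action and $\gamma$ terms cancel pairwise and I obtain the closed pair
\begin{equation*}
(\partial_t-d^*\Delta)N^* = (\mu N-\nu-\delta)N^*, \qquad (\partial_t-d\Delta)N \le b(x)+\nu N^*-\delta N.
\end{equation*}
The first equation is linear in $N^*$ with reaction coefficient $\mu N-\nu-\delta$, so everything hinges on making this coefficient negative for large time.

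The crux, and the step I expect to be the main obstacle, is converting the $L^1$ mass bound \eqref{eq:Nmax}, which only gives $\limsup_t\|N^*(t)\|_{L^1(\Omega)}\le\|b\|_{L^1(\Omega)}/\delta$, into a pointwise bound on $N$ strong enough that $\mu\|N\|_\infty<\nu$. First I would feed this $L^1$ bound into the $N$-inequality: dropping the favorable $-\mu NN^*\le0$ term and applying Duhamel together with the $L^1\!\to\!L^\infty$ smoothing of the damped Neumann heat semigroup $e^{-\delta t}e^{d\Delta t}$, I expect a bound of the form $\limsup_t\|N(t)\|_\infty \le \tfrac{\|b\|_{L^1(\Omega)}}{\delta^2}(\nu+c\delta)$, where $c=c(d,\delta,\Omega)$ is the relevant smoothing constant. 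The inequalities in \eqref{eq:munubound} are then exactly what forces $\mu\limsup\|N\|_\infty<\nu$: the bound $\mu<\delta^2/\|b\|_{L^1(\Omega)}$ keeps $\delta^2-\mu\|b\|_{L^1(\Omega)}$ positive, while $\nu>c\mu\delta\|b\|_{L^1(\Omega)}/(\delta^2-\mu\|b\|_{L^1(\Omega)})$ rearranges to precisely $\mu\|b\|_{L^1(\Omega)}(\nu+c\delta)<\nu\delta^2$, i.e. $\mu\cdot\tfrac{\|b\|_{L^1(\Omega)}}{\delta^2}(\nu+c\delta)<\nu$. Hence $\mu N-\nu-\delta<0$ for large $t$, and a comparison argument (integrating factor plus Lemma \ref{lem:bound}) yields $\|N^*(t)\|_\infty\to0$, so $S^*,I^*,R^*\to0$ uniformly. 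Making this smoothing estimate and the self-consistent bound rigorous, in particular handling the short-time singularity of the heat kernel when $n=2$, is the delicate part.

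Once $N^*\to0$ uniformly, the proof reduces to Theorem \ref{thm:globalStabilityNonComp}(i). The $S$-equation becomes asymptotic to $(\partial_t-d\Delta)S=b(x)-\delta S$, so $\limsup\|S\|_\infty\le\|\tilde S\|_\infty$; and since $I^*\le N^*\to0$, the $I$-equation obeys $(\partial_t-d_I\Delta)I\le(\beta(1-\alpha)^2(\tilde S+\eps)-(\gamma+\delta))I+o(1)$. Because $\R_0<1$ gives $\lambda(\tilde S)<0$ by Lemma \ref{lem:localComp}, choosing $\eps$ small and comparing against $A\phi(x)e^{\lambda(\tilde S+\eps)t}$ for the principal eigenfunction $\phi$ of \eqref{e.w08012} forces $I\to0$; then $S\to\tilde S$ and the $R$-equation gives $R\to0$. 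Thus $(S,I,R,S^*,I^*,R^*)\to\tilde E$, proving (i).

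For (ii) I would argue by contradiction exactly as in Theorem \ref{thm:globalStabilityNonComp}(ii). Assuming the solution stays within $\eps_0$ of $\tilde E$ for all large $t$ and every small $\eps_0$, the assumption directly gives, for large $t$, $S\ge\tilde S-\eps_0$ together with $I,R,S^*,I^*,R^*\le\eps_0$, whence $N^*\le3\eps_0$. Dropping the nonnegative terms $\beta(1-\alpha)SI^*$ and $\nu I^*$ in the $I$-equation yields $(\partial_t-d_I\Delta)I\ge(\beta(1-\alpha)^2(\tilde S-\eps_0)-(\gamma+\delta)-3\mu\eps_0)I$. Since $\R_0>1$ makes $\lambda(\tilde S)>0$, by continuity the principal eigenvalue of this operator remains positive for $\eps_0$ small, so comparison against $\eta\phi(x)e^{\lambda t}$ shows $I$ grows without bound, contradicting $\|I\|_\infty\le\eps_0$. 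This produces the desired $\eps_0$ and completes the proof.
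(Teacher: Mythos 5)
Your proposal is correct and takes essentially the same route as the paper's proof: the identical reduction to the SIS-type pair $N=S+I+R$, $N^*=S^*+I^*+R^*$ under the equal-diffusion hypotheses, the same conversion of the $L^1$ mass bound on $N^*$ into a uniform bound on $N$ via Duhamel and the damped Neumann semigroup (your bound $\tfrac{\|b\|_{L^1(\Omega)}}{\delta^2}(\nu+c\delta)$ and its rearrangement into \eqref{eq:munubound} match the paper's \eqref{eq:Nbound3}--\eqref{eq:MuAndNu} exactly), the same eigenvalue/comparison argument once $N^*$ decays, and the same contradiction argument for (ii). The short-time heat-kernel singularity you flag as delicate is present in the paper's argument as well (it simply asserts $G(x,y,t)\le ce^{-\delta t}$ from a spectral expansion), so your attempt is at the same level of rigor as the published proof.
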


  \textit{Remark.} Before the proof, we reiterate some interpretation, explain the assumptions, and describe the strategy. In this case, there are two manners in which $\tilde E = (\tilde S,0,0,0,0,0)$ could be unstable: (1) the number of infections could grow and persist as will happen when $\R_0 > 1$ in result \textit{(ii)}, or (2) noncompliance could grow in the case that $\mu$---the noncompliance ``infectivity" rate---is too large relative to $\nu$---the noncompliance ``recovery" rate. This latter case could then further be broken down into two types on instability: (2a) the noncompliance could persist, so that $N^* \not \to 0$, or (2b) the growth of the noncompliant population could increase the effective reproductive ratio, meaning that infections surge and perhaps persist, so that $I,I^* \not \to 0.$ For result \textit{(i)}, the assumptions on the diffusion coefficients allow us to focus the first part of the analysis on $N = S+I+R$ and $N^* = S^* + I^* + R^*$. These satisfy an SIS (susceptible-infected-susceptible) system of equations. To circuit the possibility of $N^*$ growing, we use the assumptions about the smallness of $\mu$ and largeness of $\nu$. This will ensure that the population returns to a fully compliant state, whereupon we can use the assumption $\R_0 <1$ to prove that infections die out.  

  \begin{proof}
    To prove \textit{(i)}, define the total compliant population $N = S+I+R$. Because we assume $d_S = d_I = d_R = d$ and $d_{S^*} = d_{I^*} = d_{R^*} = d^*$, from \eqref{eq:allCompliant}, we see that the compliant population $N$ and the noncompliant population $N^* = S^*+I^*+R^*$ satisfy the pair of reaction-diffusion equations \begin{align} 
    \partial_tN &= d\Delta N+ b(x) - (\mu N - \nu) N^* - \delta N, \label{eq:N}\\
    \partial_t N^* &=d^* \Delta N^* +(\mu N - \nu)N^* - \delta N^*. \label{eq:Nstar} 
    \end{align}

    From here, we bound $N$ so that our assumptions regarding $\mu$ and $\nu$ ensure that the first term in \eqref{eq:Nstar} is negative. In doing so, \eqref{eq:Nstar} will yield exponential decay of $N^*$ whereupon arguments similar to those in section \ref{sec:nonCompDFE} will ensure global stability of $\tilde E$.

    To this end, we follow the strategy of \cite{ZAMP,Luo,Ren}. Let $\Gamma(t): C(\overline \Omega) \to C(\overline \Omega)$ denote the $C_0$ semigroup associated with the operator  $(d\Delta-\delta)$ with Neumann boundary conditions. That is, $$(\Gamma(t)\phi)(x) = \int_\Omega G(x,y,t)\phi(y)dy, \,\,\,\, t> 0, \,\, x \in \overline \Omega$$ where $G$ denotes the Green's function corresponding to $(d\Delta-\delta)$ with Neumann boundary conditions on $\partial \Omega$. In particular, there is $M > 0$ such that \begin{equation}\label{eq:GamBound} \|\Gamma(t)\| \le M e^{\alpha t}, \,\,\, t \ge 0 \end{equation} where $\alpha < 0$ is the principle eigenvalue of $(d\Delta - \delta)$ with Neumann boundary conditions. Then for any $t \ge t_0,$ \begin{equation}\label{eq:Nbound}\begin{split} N(x,t) &= \Gamma(t-t_0)N(x,t_0) + \int^t_{t_0} \Gamma(t-s)[b(x)-\mu N(x,s)N^*(x,s) + \nu N^*(x,s)]ds \\ 
    &\le Me^{\alpha(t-t_0)} \|N(\cdot,t_0)\|_{L^\infty(\Omega)} + \int^t_{t_0} \Gamma(t-s)[b(x) + \nu N^*(x,s)]ds \\
    &=Me^{\alpha(t-t_0)} \|N(\cdot,t_0)\|_{L^\infty(\Omega)} + \int^t_{t_0} \int_\Omega G(x,y,t-s)[b(y) + \nu N^*(y,s)]dy ds,
    \end{split} \end{equation} where the bound of the integral follows because $\Gamma(t)$ is strongly positive for $t > 0$ \cite[Corollary 7.2.3]{Smith} and $N,N^*$ are nonnegative. Next, performing a spectral expansion as in \cite{ZAMP,Luo,Ren} and using uniform boundedness of the eigenfunctions of $(d\Delta - \delta)$, one achieves $$G(x,y,t) \le c e^{-\delta t}, \,\,\,\, t > 0,$$ for some constant $c > 0$ which depends on the eigenvalues and eigenfunctions of the operator, which in turn depend on $d,\delta$ and the domain $\Omega$. Applying this in \eqref{eq:Nbound}, we have \begin{equation} \label{eq:Nbound2} \begin{split} N(x,t) &\le Me^{\alpha(t-t_0)} \|N(\dot,t_0)\|_{L^\infty(\Omega)} + c \int^t_{t_0} e^{-\delta(t-s)} (\|b\|_{L^1(\Omega)} + \nu \|N^*(t)\|_{L^1(\Omega)}) ds\\
    &= Me^{\alpha(t-t_0)} \|N(\cdot,t_0)\|_{L^\infty(\Omega)} + \frac{c(\|b\|_{L^1(\Omega)} + \nu \|N^*(t)\|_{L^1(\Omega)})}{\delta}(1 - e^{-\delta(t-t_0)}) \\
    &\le Me^{\alpha(t-t_0)} \|N(\cdot,t_0)\|_{L^\infty(\Omega)} + \frac{c(\|b\|_{L^1(\Omega)} + \nu \|N^*(t)\|_{L^1(\Omega)})}{\delta}.
    \end{split} \end{equation}
    Now for any $\eta > 0$, taking $t_0 > 0$ large enough, we see from \eqref{eq:Nmax} that $$\|N^*(t)\|_{L^1(\Omega)} \le \eta + \frac{\|b\|_{L^1(\Omega)}}{\delta}, \,\,\,\,\,\, t \ge t_0.$$ Fixing this $t_0$, we can take $t_1$ large enough that $Me^{\alpha(t_1-t_0)}\|N(\cdot,t_0)\|_{L^\infty(\Omega)} < \eta$ as well. Inserting both of these bounds in \eqref{eq:Nbound2}, we have \begin{equation}\label{eq:Nbound3}
    N(x,t) \le \eta + \frac{c\|b\|_{L^1(\Omega)} + \nu(\|b\|_{L^1(\Omega)}/\delta + \eta)}{\delta}, \,\,\,\, t \ge t_1.
    \end{equation} This provides a uniform bound on $N(x,t)$ which holds when $t \ge t_1$. Note also that we can take $\eta > 0$ as small as desired, at the cost of increasing $t_1.$ 

    Thus, to ensure $\mu N - \nu \le 0$ for large $t$, it suffices to require that $$\nu > \mu \left( \eta + \frac{c\|b\|_{L^1(\Omega)} + \nu(\|b\|_{L^1(\Omega)}/\delta + \eta)}{\delta}\right)$$ or equivalently \begin{equation} \label{eq:MuAndNu} \nu \left(1 - \mu \left( \frac{\|b\|_{L^1(\Omega)}}{\delta^2} + \frac \eta \delta \right)\right) \ge \mu \eta + \frac{\mu c \|b\|_{L^1(\Omega)}}{\delta}.\end{equation} If $\mu$ is too large, this will be impossible since the left hand side above will be negative. However, supposing, as in the hypotheses of the theorem, that $\mu < \delta^2/\|b\|_{L^1}$, we can take $\eta$ small enough that $\mu < 1/(\|b\|_{L^1(\Omega)}/\delta^2 + \eta / \delta).$ Next, supposing that $\nu > \mu c \delta \|b\|_{L^1(\Omega)} / (\delta^2-\mu \|b\|_{L^1(\Omega)}),$ we can decrease $\eta$ again if necessary to ensure that \eqref{eq:MuAndNu} holds, and thus $\mu N(x,t) - \nu\le 0$ when $t \ge t_1.$ Then from \eqref{eq:Nstar}, for $t \ge t_1$, we have \begin{equation} \label{eq:NstarEqDecay}\partial_t N^* \le d^*\Delta N^* - \delta- N^*,\end{equation} so that, by lemma \ref{lem:bound}, $N^*$ decays uniformly to zero at an exponential rate as $t\to\infty$. By positivity of solutions, this implies exponential decay of $S^*,I^*,R^*$ for large time.

    The uniform bound on $N(x,t)$ given by \eqref{eq:Nbound3} also provides a uniform bound on $S,I,R$, meaning that all terms involving $S^*,I^*,R^*$ in \eqref{eq:allCompliant} decay exponentially for large time. Thus \eqref{eq:allCompliant} is asymptotic to \begin{equation} 
    \label{eq:asympCompDFE}
    \begin{split}
    (\partial_t -d\Delta)S &\le  b(x) - \beta(1-\alpha)^2SI - \delta S, \\
    (\partial_t -d\Delta)I &\le  \beta(1-\alpha)^2SI - (\gamma + \delta) I,\\
    (\partial_t -d\Delta)R &\le  \gamma I - \delta R,
    \end{split}
    \end{equation} Analyzing this system, it is straightforward to prove convergence of the solution to $(\tilde S,0,0)$ under the condition that $\R_0 < 1$, using methods similar to those in section \ref{sec:nonCompDFE}. Indeed, we see $$(\partial_t - d\Delta)(S-\tilde S) \le - \beta(1-\alpha)^2SI - \delta(S-\tilde S) \le -\delta(S-\tilde S). $$ Using an integrating factor and applying lemma \ref{lem:bound}, this shows that $S(x,t) \to \tilde S(x)$ uniformly as $t\to\infty$. In particular, for arbitrary $\eps > 0$, we can find $t_2 \ge t_1$ large enough that $$\abs{S(x,t)} \le \abs{\tilde S(x)} + \eps, \,\,\,\,\,\, t \ge t_2.$$ Then for $t \ge t_2$, $$(\partial_t-d\Delta)I \le (\beta(1-\alpha)^2(\tilde S + \eps) - \gamma)I.$$ By lemma \ref{lem:localComp}, since we are assuming $\R_0 < 1$, we have that $\lambda(\tilde S)<0$, where $\lambda(\tilde S)$ is the leading eigenvalue for equation \eqref{e.w08012}. By continuity, for sufficiently small $\eps > 0$, $\lambda(\tilde S + \eps) < 0.$ Allowing $\varphi$ to be the positive eigenfunction corresponding to $\lambda(\tilde S + \eps)$, and taking $A>0$ large enough that $I(x,t_2) \le A\varphi(x)$, an application of the comparison principle shows that $$I(x,t) \le A\varphi(x) e^{\lambda(\tilde S+\eps)(t-t_2)}, \,\,\,\,\,\, t \ge t_2,$$ whereupon $I(x,t) \to 0$ uniformly as $t\to\infty$. Finally, this shows that the $R$ equation is asymptotic to $$(\partial_t - d\Delta R) = -\delta R$$ which gives exponential decay of $R$ as well. Thus $(S,I,R,S^*,I^*,R^*) \to (\tilde S,0,0,0,0,0)$ uniformly as $t\ to \infty$ which concludes the proof of \textit{(i)}. 

Next we prove \textit{(ii)}. Let $\mathcal R_0 > 1$ and assume toward a contradiction that for any $\eps_0 > 0$, there is a positive solution of \eqref{eq:allCompliant} satisfying 
			 \begin{equation}
			 \limsup_{t\to\infty}\|(S,I,R,S^*,I^*,R^*)-(\tilde S,0,0,0,0,0)\|_{L^\infty(\Omega)}\le\epsilon_0.
			 \end{equation}	
		 Then in particular, $S(x,t) \to \tilde S(x)$ and $I^*(x,t), I(x,t), N^*(x,t) \to 0$ uniformly as $t \to \infty$, which means that the $I$ equation in \eqref{eq:allCompliant} is asymptotic to $$(\partial_t - d\Delta)I = \beta(1-\alpha)^2SI - \gamma I.$$ From here, the proof proceeds exactly as the proof of \textit{(ii)} in theorem \ref{thm:globalStabilityNonComp}: $\R_0 > 1$ implies $\lambda(\tilde S - \eps_0) > 0$ for sufficiently small $\eps_0$, which implies exponential growth of $I(x,t)$ for large time, contradicting our assumption. 

   We conclude that when $\R_0 > 1$, there exists $\eps_0 > 0$ such that any positive solution of \eqref{eq:allCompliant} satisfies
				\begin{equation}
				\limsup_{t\rightarrow \infty}
				\|(S,I,R,S^*,I^*,R^*)-(\tilde S,0,0,0,0,0)\|_{L^\infty(\Omega)}>\epsilon_0.
				\end{equation}
\end{proof}

\WW{\textit{Remark.} In the proof of theorem \ref{t.w11191}(i), the condition on $\nu$ in \eqref{eq:munubound} can be relaxed slightly to instead read \begin{equation}\label{eq:nuModified}\nu > \frac{c\mu \delta \|b\|_{L^1(\Omega)} - \delta_*^3}{\delta^2 - \mu \pnorm b 1 \Omega},\end{equation} for some $\delta_* \in [0,\delta).$ Following the proof through, this results in the bound $\mu N - \nu \le \delta^*$, and we achieve exponential decay of $N^*$ from \eqref{eq:NstarEqDecay}, with decay rate $\delta - \delta_*$. We note that if $\mu$ is sufficiently small, the numerator on the right hand side of \eqref{eq:nuModified} is negative. In this case, the condition is automatically satisfied, and we arrive at the result with no constraint on $\nu$. That is, if $\mu$ is small enough and $\xi = 1$ so that all newly introduced members of the population are compliant, then there is no possibility of endemic noncompliance, even in the absence of recovery from noncompliance.} 

\section{Simulation \& Discussion}\label{s5}

In this final section, we simulate our model using MATLAB, discuss results with emphasis on how the behavior of our model differs from that of vanilla SIR-type models. We note that the theorems above address the cases where either (1) the population is almost entirely noncompliant, wherein the effective reproductive ratio of the disease is closer to $\R_0^*$, or (2) the population remains almost entirely compliant, wherein the effective reproductive ratio of the disease is closer to the smaller value $\R_0$. However, over the course of the epidemic, the effective reproductive ratio will be changing: it should be some sort of weighted average of $\R_0$ and $\R_0^*$, depending on the portion of the population which is noncompliant. These effects are very difficult to capture analytically because they depend on the intermediate-time dynamics of the model. However, we can demonstrate the effects through simulation. 

To simulate the model, we use a semi-implicit finite difference scheme, wherein the diffusion is resolved implicitly, but the nonlinear terms are resolved explicitly. We perform simulations in several parameter regimes to demonstrate different features of the model.  For simplicity, we use a square domain $\Omega = (-5,5)^2$ for all simulations. In all cases, we set the natural birthrate to be constant $b(x) = b$. In figures \ref{fig:1}-\ref{fig:6}, all diffusion coefficients to be equal (we denote the mutual value $d$); we experiment with the varying the diffusion coefficient for the infectious populations in figures \ref{fig:7} and \ref{fig:8}. The parameters corresponding to the simulation which produced each of the below figures are listed in table \ref{tab:params}. The other key pieces of data are the initial conditions. Figures \ref{fig:1}-\ref{fig:2} correspond to the same simulation, and have a unique initial condition, whereas each of figures \ref{fig:3}-\ref{fig:6} correspond to different simulations, but each of figures \ref{fig:3}-\ref{fig:6} have the same initial condition. We specify these below as well. In all cases, we choose $R_0 = R_0^* = 0$ so that initially, there is no recovered population. Additionally, we only specify $S_0$ and $I_0$ and then set $S_0^* = S_0/20$ and $I_0^* = I_0/20$. In doing so, we are assuming that initially, roughly $5\%$ of the population is noncompliant. We emphasize that all of these parameter values and initial conditions are synthetic and were chosen simply to demonstrate the behavior of the model in different regimes.

\begin{table}[t]
\centering
\begin{tabular}{|c r r r r r|}
\hline
Parameter & Figs. 1-2 & Fig. 3 & Fig. 4 & Fig. 5 & Fig. 6 \\
\hline
   $\beta$  &  6  & 0.05 & 50 & 1 & 1 \\ 
  $\gamma$  &  1  & 1 & 1 & 1 & 1 \\ 
       $b$  &  0.02  & 0.02 & 0.02 & 0.02 & 0.02 \\ 
  $\delta$  &  0.001  & 0.001 & 0.001 & 0.001 & 0.001 \\ 
  $\alpha$  &  0.5  & 0.1 & 0.8 & 0.8 & 0.8 \\ 
     $\mu$  &  1  & 1 & 1 & 0.01 & 2 \\ 
     $\nu$  &  1  & 0 & 0 & 0.015 & 0.015 \\ 
     $\xi$  &  0.95  & 0 & 0 & 1 & 1 \\ 
       $d$  &  0.02  & 0.02 & 0.02 & 0.02 & 0.02 \\ 
\hline
\end{tabular}
\caption{Parameter values used for simulations displayed in each figure below.}
\label{tab:params}
\end{table}

\WW{While figures \ref{fig:1}-\ref{fig:2} demonstrate general observations regarding our model, it is of particular interest in figures \ref{fig:3}-\ref{fig:6} to demonstrate the results of theorems \ref{thm:globalStabilityNonComp} and \ref{t.w11191}, which make assumptions on the reproductive ratios $\mathcal R_0^*$ and $\mathcal R_0$. Because of this, it is convenient to quantify these reproductive ratios given the parameter values in those figures. Note that if the birth rate $b(x) = b$ is constant (as in all of our simulations), the steady-state solutions of \eqref{e.w07291} and \eqref{eq:compliantSeq} are constant. For exmaple, $\tilde S^*$ satisfies $$-d_{S^*} \tilde S^* = b - (\nu + \delta)\tilde S^*$$ along with Neumann boundary conditions, which has the unique solution $\tilde S^* = \frac{b}{\nu + \delta},$ and likewise one finds that $S^* = \frac b \delta$ is the unique steady-state solution of \eqref{eq:compliantSeq}. Because of this, the eigenvalue problems given by \eqref{e.w07293} and \eqref{e.w08012} are constant coefficient, and thus can be solved explicitly in the square domain $\Omega = (-5,5)\times (-5,5).$ Specifically, due to the Neumann boundary conditions, the eigenfunctions for each equation have the form $\phi_{k,\ell}(x,y) = \cos(k\pi x/5)\cos(\ell\pi y/5)$. The principle eigenvalues then correspond to $k=\ell=0$. These are given by \begin{equation} \label{eq:eigenvalues}\lambda^* = \frac{b\beta}{\nu + \delta} - (\gamma + \delta), \,\,\,\,\,\,\,\,\,\, \lambda = \frac{b\beta(1-\alpha)^2 }{\delta} - (\gamma+\delta)\end{equation} for \eqref{e.w07293} and \eqref{e.w08012}, respectively. Recall, by lemmas \ref{lem:localNonComp} and \ref{lem:localComp}, $\R_0^* >1$ if and only if $\lambda^* > 0$, and likewise for $\R_0$ and $\lambda$. Thus these eigenvalues gives us a manner of ensuring that we fall into the correct parameter regimes in order to demonstrate theorems \ref{thm:globalStabilityNonComp} and \ref{t.w11191}.}

The first simulation demonstrates something that is not necessary unique to our model, but is unique to spatial models, and is accented even further by our model. In this case, we set $S_0(x) = \text{exp}(-5\abs{x}^2)$ for $x \in (-5,5)^2$ so that the initial susceptible population is concentrated very strongly at the origin, whereas $I_0(x) = \frac{1}{20} \text{exp}(-5\abs{x - (3,3)}^2)$ for $x \in (-5,5)^2$, meaning that the initial infections are very strongly concentrated at $(3,3),$ and comprise roughly $ 5\% $ of the total population. Because of this, for small $t >0$, $S(\alpha I+I^*)$ and $S^*(\alpha I + I^*)$ are approximately zero, meaning very few new infections occur, and initially the total number of infections decreases. However, after enough time, the populations have diffused enough that there is more overlap causing more infections to occur, which results in a later spike in the total number of infections. We see this in figure \ref{fig:1}, where we plot the total portion of the infected population $$I_{\text{total}}(t) = \|(I+I^*)(t)\|_{L^1(\Omega)} / \|(S+I+R+S^*+I^*+R^*)(t)\|_{L^1(\Omega)}$$ as a function of time. Also plotted in figure \ref{fig:1} is the total portion of the noncompliant population: $$N^*_{\text{total}}(t) = \|(S^*+I^*+R^*)(t)\|_{L^1(\Omega)} / \|(S+I+R+S^*+I^*+R^*)(t)\|_{L^1(\Omega)}.$$ Note that for this simulation, the initial spike in infections occurs before a significant portion of the population becomes noncompliant. The first spike in infections then declines due to the decline in the susceptible population, which causes a decline in the effective reproductive ratio of the disease. However, when enough of the population becomes noncompliant, the effective reproductive ratio increases because noncompliant populations have a higher infection rate, which causes a second wave. This behavior then repeats. In figure \ref{fig:2}, we display snapshots of the infectious population $I(x,t)+I^*(x,t)$ at different times $t \ge 0$, where we see that the initial profile is concentrated near the point $(3,3)$. The infectious population then ``migrates" toward  the origin where the susceptible population is concentrated, and increases when it is sufficiently close.

\begin{figure}
    \centering
    \includegraphics[width=\textwidth]{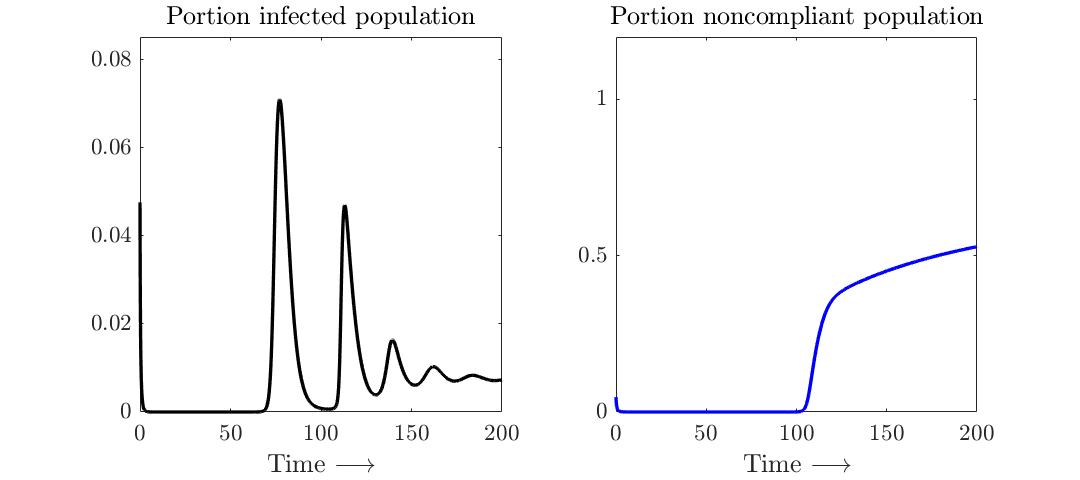}
    \caption{The infections initially decay, and then spike once the susceptible populations and infectious populations diffuse enough that they overlap. After the initial spike, the noncompliant population is large enough to increase the effective reproductive ratio and cause another spike. The infectious population for this simulation is plotted in figure \ref{fig:2}.}
    \label{fig:1}
\end{figure}

\begin{figure}
    \centering
    \includegraphics[width = 0.31\textwidth]{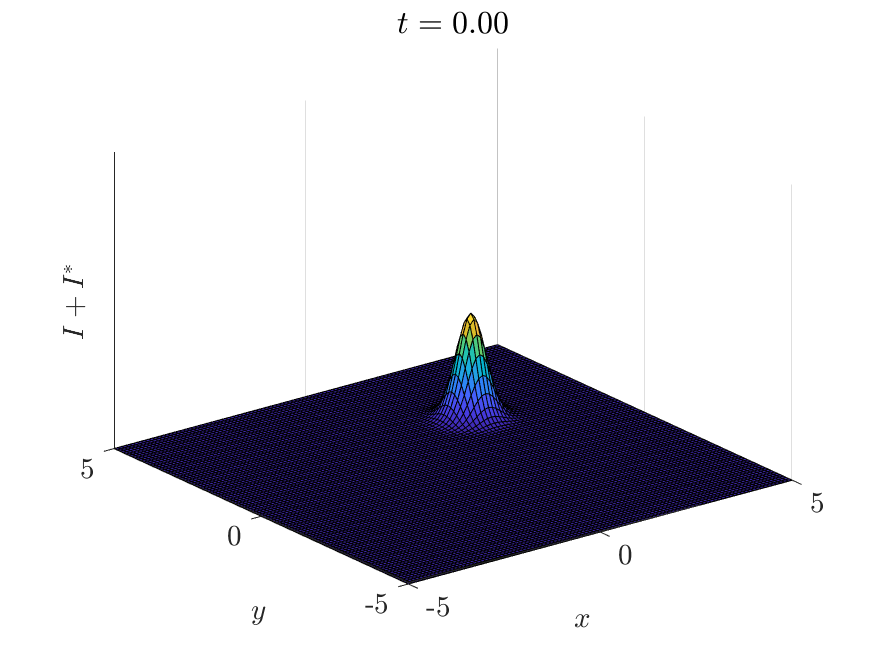}\,\includegraphics[width = 0.31\textwidth]{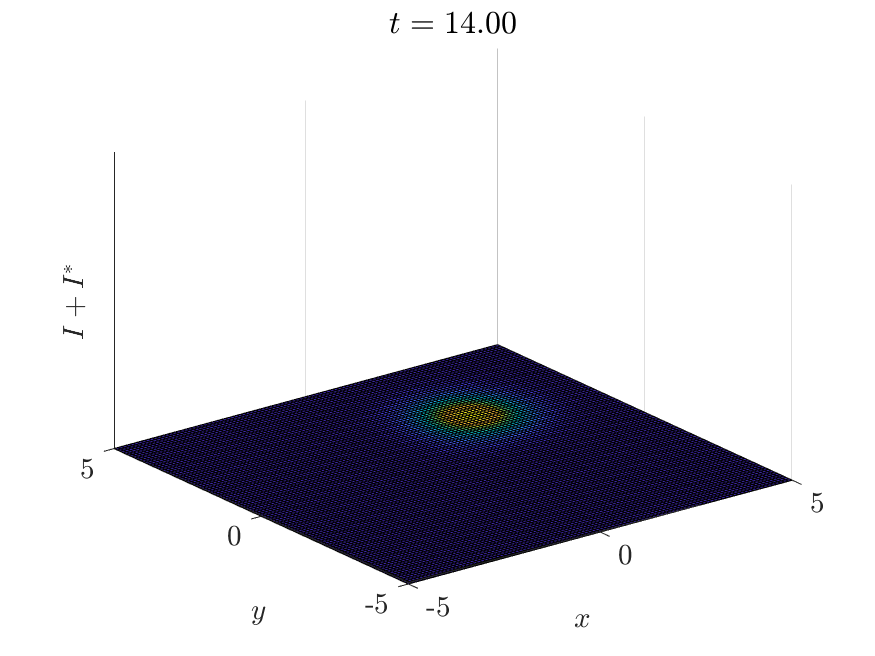}\,\includegraphics[width = 0.31\textwidth]{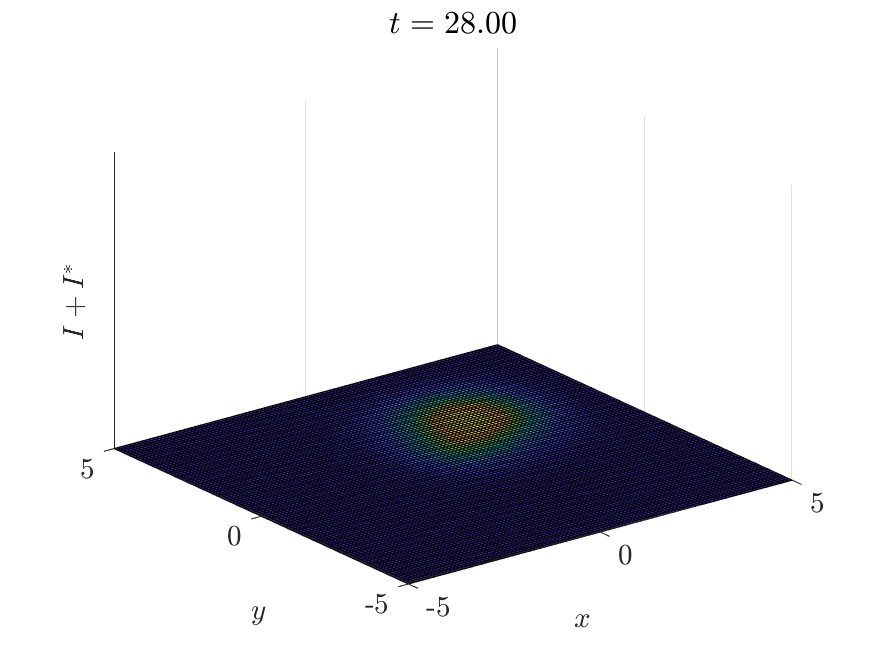}\\\includegraphics[width = 0.31\textwidth]{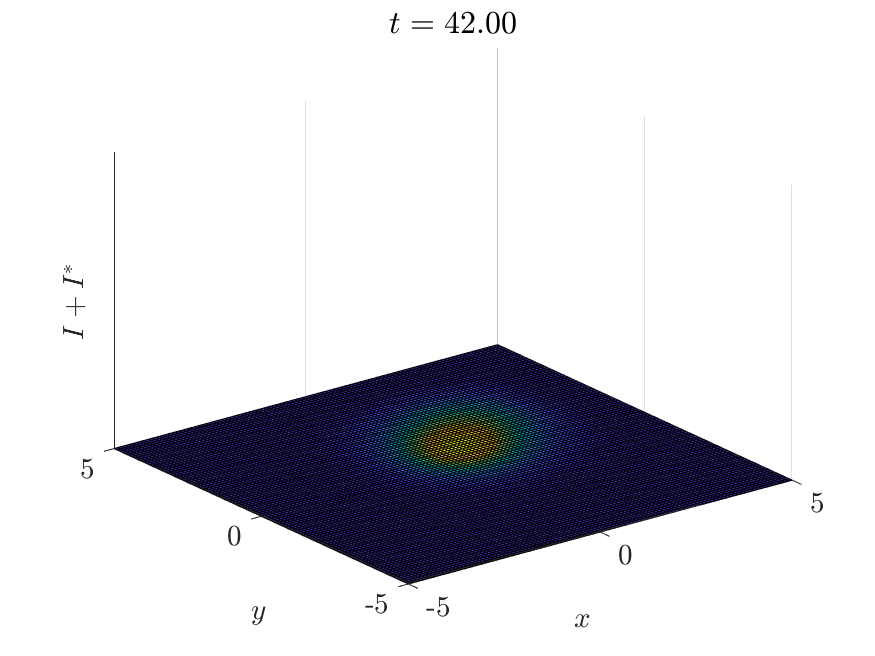}\,\includegraphics[width = 0.31\textwidth]{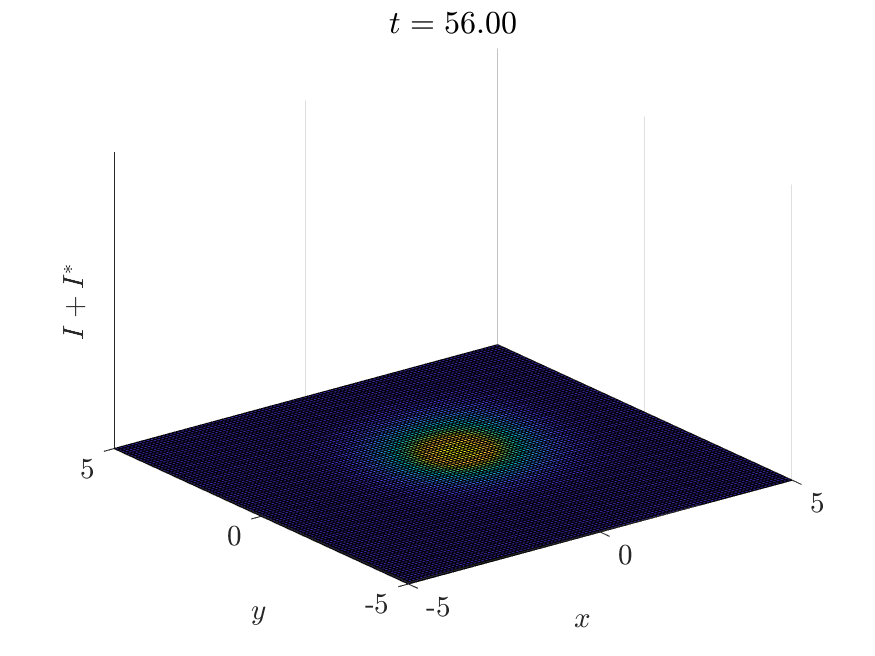}\,\includegraphics[width = 0.31\textwidth]{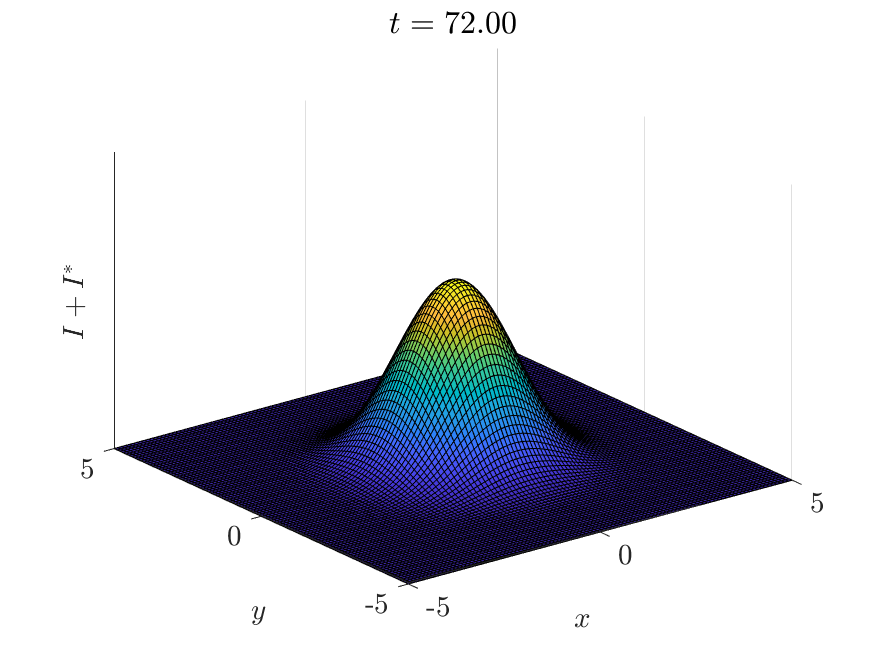}
    \caption{Snapshots of the infectious population $I+I^*$ as time $t$ increases. The infections are initially concentrated near $(3,3)$. They decline since there is very little susceptible population in this area. The infections then ``migrate" toward the origin, where the susceptible population is more concentrated, and increase once they are close enough.}
    \label{fig:2}
\end{figure}

In the ensuing figures, we let $S_0$ be a sum of four Gaussians, centered at different points around $(-5,5)$ and having different variances. These could be thought of as population centers like urban areas, which are much more densely populated than the rural areas surrounding them. Here $I_0 = S_0 / 100$, meaning that the infectious population is distributed identically to the susceptible population, but comprises only about $1 \% $ of the population. 

Figure \ref{fig:3} demonstrates theorem \ref{thm:globalStabilityNonComp}(i). In this simulation, \WW{the parameters ensure that $\lambda^* = \frac{b\beta}{\nu + \delta} - (\gamma + \delta) = - \delta < 0$, so that $\R_0^* < 1$. In this case, even when nearly the entire population is noncompliant, the infection rate is still too low to cause an outbreak.} Note that in this case, $\xi = 0$, so all newly introduced members of the population are noncompliant. As far as noncompliance goes, this is somewhat of a worst case scenario, and driving the infection rate down is the only means of preventing an infection. 

\begin{figure}
    \centering
    \includegraphics[width=\textwidth]{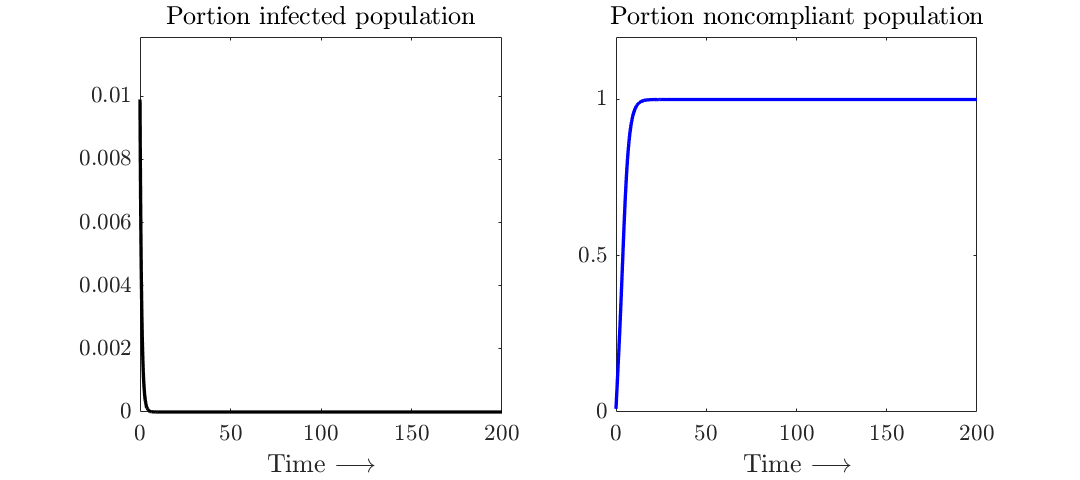}
    \caption{If the infection rate $\beta$ is small enough, the effective reproductive ratio will stay small even when the entire population becomes noncompliant, so that no outbreak occurs. This demonstrates theorem \ref{thm:globalStabilityNonComp}(i).}
    \label{fig:3}
\end{figure}

In figure \ref{fig:4}, we demonstrate theorem \ref{thm:globalStabilityNonComp}(ii). In this simulation, the infection rate $\beta = 50$ is very large, so that $\lambda^* = \frac{b\beta}{\nu +\delta} - (\gamma + \delta) \approx 1000 > 0$, meaning that $\R_0^* > 1$. In fact, even with an $80\%$ reduction in infectivity due to compliance ($\alpha = 0.8$), we have $\lambda = \frac{b\beta(1-\alpha)^2}{\delta} - (\gamma + \delta) \approx 40 >0$ so that $\R_0 > 1$. Thus, even at the outset of the simulation when a majority of the population is compliant, the infection still grows. Due to this, we see a swift increase in infections. The total portion of the infected population also does not tend to zero, but rather settles at some nonzero constant: approximately $1 \% $ of the total population is infected in the asymptotic regime. 

\begin{figure}
    \centering
    \includegraphics[width=\textwidth]{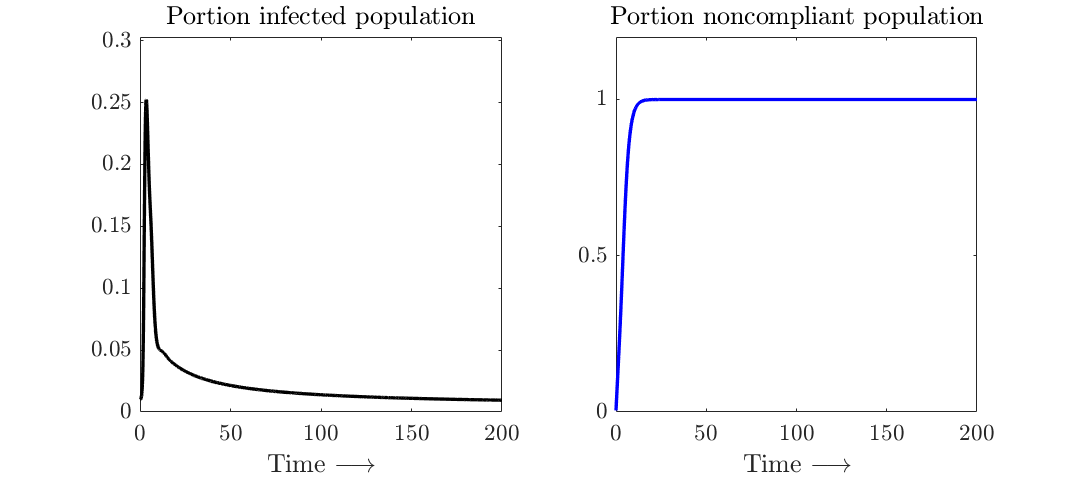}
    \caption{If the infection rate $\beta$ is large enough, the effective reproductive ratio is large even when the vast majority of the population is compliant, and the reproductive ratio only increases from there, so there is no hope of containing the disease and trending toward a disease free state. Note that the This demonstrates theorem \ref{thm:globalStabilityNonComp}(ii).}
    \label{fig:4}
\end{figure}

\WW{Figures \ref{fig:5} and $\ref{fig:6}$ demonstrate the dependence of the outcome of the epidemic on the noncompliance infection rate and recovery rate ($\mu$ and $\nu$ respectively), which is encoded in theorem \ref{t.w11191}. In these simulations, parameters are chosen so that $\lambda = \frac{b\beta(1-\alpha)^2}{\delta} - (\gamma + \delta) \approx -\frac 1 5< 0$, and thus $\R_0< 1$. However, $\lambda^* = \frac{b\beta}{\nu + \delta} - (\gamma + \delta) \approx \frac 1 3$, so $\R_0^* > 1$. In this case, preventing an outbreak of the disease would hinge upon the population remaining compliant.} This is seen in figures \ref{fig:5} and \ref{fig:6}. Note that the simulations for these figures hold all parameters constant except for $\mu$ and $\nu$. In figure \ref{fig:5}, $\mu$ is small relative to $\nu$, meaning that individuals become noncompliant at a slower rate, and once they become noncompliant, transfer back to compliance very quickly. In this case, the portion of the noncompliant population remains small for all time, and no outbreak occurs. However, in figure \ref{fig:6}, the roles are reversed: $\mu$ is large relative to $\nu$, meaning that the noncompliant populations grow more rapidly, and become compliant again more slowly. In this case, while there is initial decrease in the total portion of the infected population, after a large enough portion becomes noncompliant, there is an outbreak of the disease, as effective reproductive ratio approaches $\R_0^*$. This demonstrates the crucial dependence theorem \ref{t.w11191}(i) on $\mu$ and $\nu$. If one can guarantee that the population remains compliant (i.e., if $\mu$ is small enough and $\nu$ is large enough), one can achieve asymptotic stability of the disease free state, but this also demonstrates that with no such guarantee, an outbreak can occur. 

\begin{figure}
    \centering
    \includegraphics[width=\textwidth]{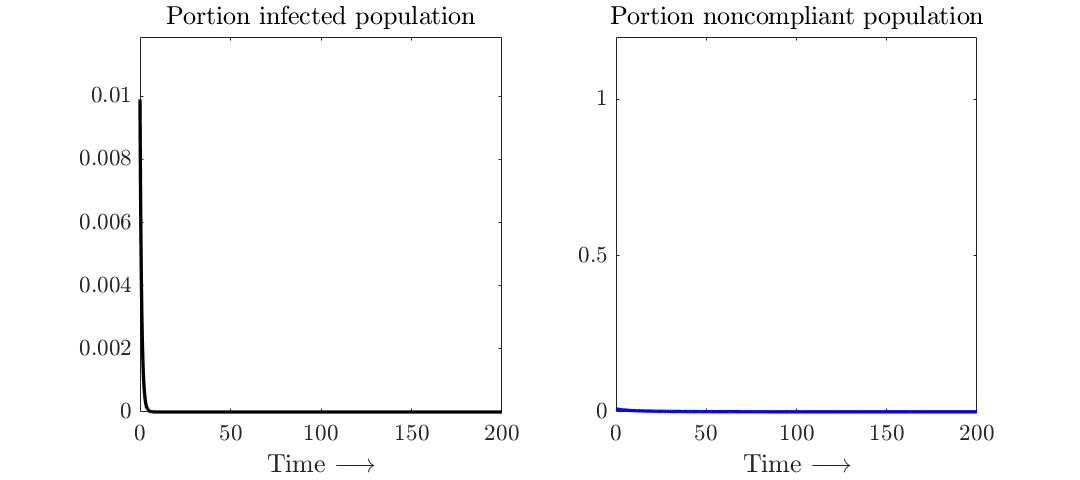}
    \caption{When $\mu$ is small and $\nu$ is large, the portion of the noncompliant population remains small for all time, meaning that the effective reproductive ratio of the disease remains small and no outbreak occurs.}
    \label{fig:5}
\end{figure}

\begin{figure}
    \centering
    \includegraphics[width=\textwidth]{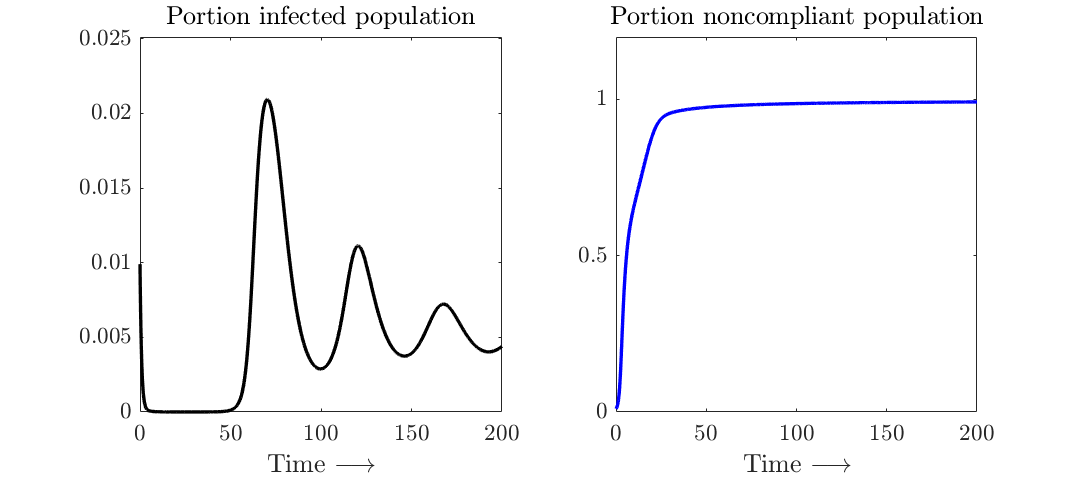}
    \caption{When $\mu$ is large and $\nu$ is small, the portion of the noncompliant populations grows, which increases the effective reproductive ratio of the disease so that an outbreak occurs.}
    \label{fig:6}
\end{figure}

\WW{Finally, for technical reasons, our theorems require assumptions regarding the diffusion coefficients, and to satisfy all of these assumptions simultaneously, it is simplest to consider the case where all diffusion coefficients are the same. However, in simulations we can vary these to empirically observe the behavior. Specifically, it is interesting to observe the behavior as $d_I$ and $d_{I^*}$ are alternately made very large or very small, since these are the coefficients that appear in \eqref{e.w08013},\eqref{e.w08014} and\eqref{e.w07293},\eqref{e.w07294}, respectively. For all the ensuing simulations, we use the same parameter values as in figure \ref{fig:6}, except that in figures \ref{fig:7} and \ref{fig:8} we vary the value of $d_I$ and $d_{I^*}$ while leaving all other diffusion coefficients fixed at $d = 0.02,$ and in figure \ref{fig:9}, we vary the value of $d_{S^*}, d_{I^*}$ and $d_{R^*}$ while leaving the other diffusion coefficients fixed at $d = 0.02.$}

\begin{figure}
\centering
\includegraphics[width=\textwidth]{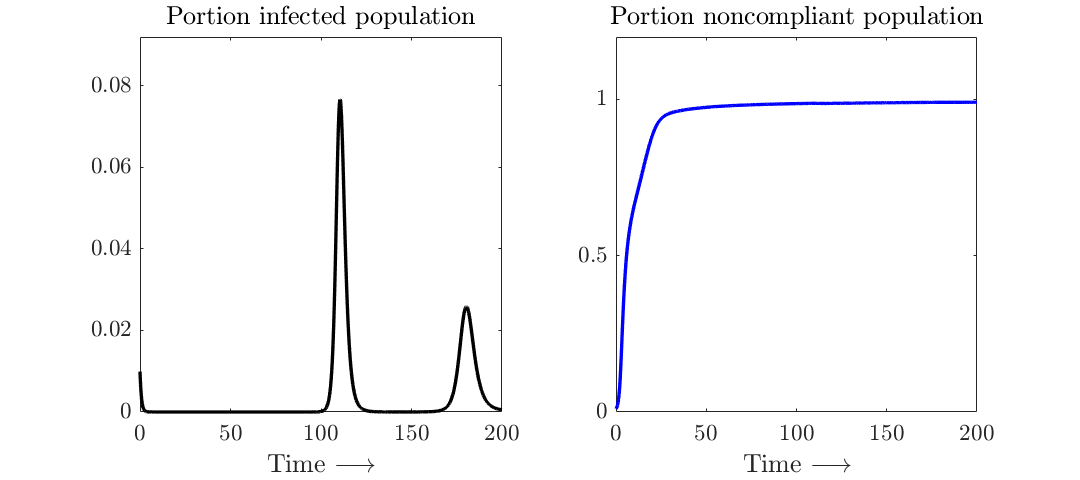}\\
\includegraphics[width=\textwidth]{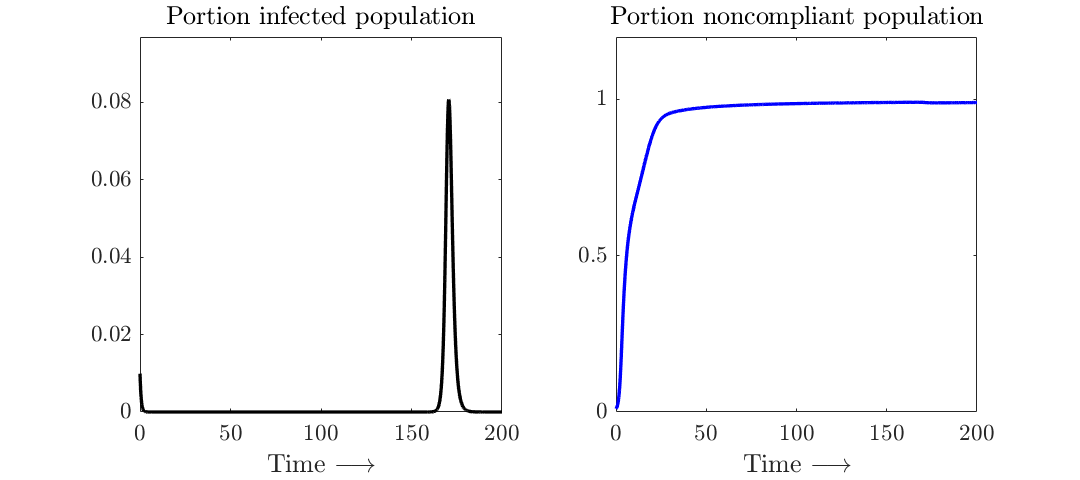}
\caption{Simulations with the same parameter values in figure \ref{fig:6}, except that $d_I=d_{I^*} = 1$ (top) or $d_I = d_{I^*} = 3$ (bottom). Empirically, when the diffusion coefficients for the infectious populations are increased, the initial outbreak is forestalled, and the infection peaks become sharper.}
\label{fig:7}
\end{figure}

\WW{In figure \ref{fig:6}, we note that the first outbreak occurs at roughly time $t = 50$. In figure \ref{fig:7}, when $d_I,d_{I^*}$ are increased to $1$ (representing a fiftyfold increase), the first outbreak does not occur until roughly time $t = 100$, and when $d_I,d_{I^*}$ are increased to $3$, the first outbreak does not occur until roughly $t = 150$. The peaks also become much sharper when $d_I,d_{I^*}$ are larger. By contrast, in figure \ref{fig:8}, when $d_I,d_{I^*}$ are decreased to $0.0004$ (representing a fiftyfold decrease), the first peak again occurs at roughly $t =50$; it may be impossible for the first outbreak to occur before this time, since this calibration requires a large portion of the population to be noncompliant in order to produce an outbreak. However, in this case, the peaks are mollified to the point that they bleed together, and peaks occur in more rapid succession. Thus empirical evidence seems to imply that increasing the diffusion coefficients for the infectious populations while leaving others fixed forestalls the epidemic while making infection peaks sharper. These observations could be of interest to policy-makers, and speak somewhat to the interesting intermediate-time dynamics displayed by reaction-diffusion systems which can be very difficult to quantify. }

\begin{figure}
\centering
\includegraphics[width=\textwidth]{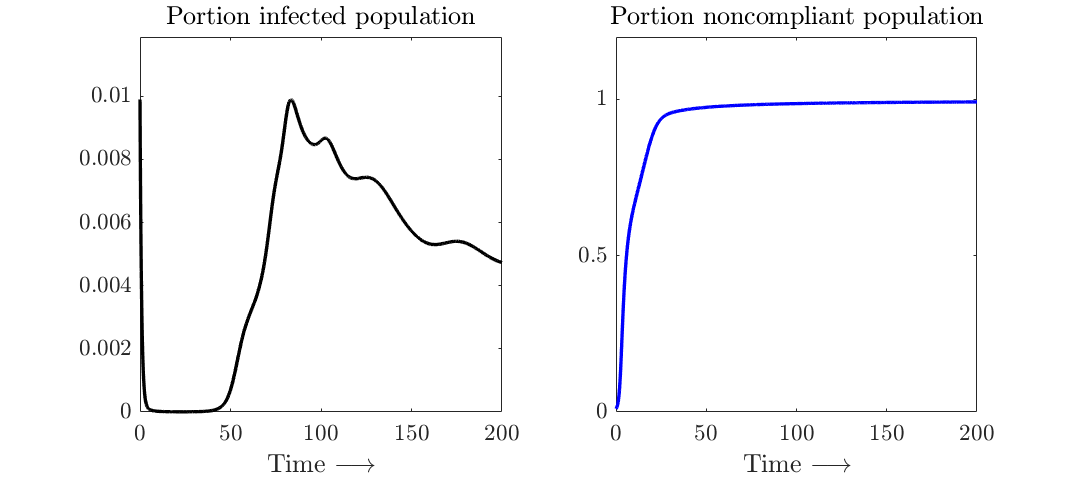}
\caption{Simulation with the same parameter values in figure \ref{fig:6}, except that $d_I=d_{I^*} = 0.0004$. Decreasing the diffusion coefficients for the infectious population causes the infection peaks to blur together. The initial outbreak occurs at roughly the same time as in figure \ref{fig:6}, because the noncompliant population must become sufficiently large for an outbreak to occur. }
\label{fig:8}
\end{figure}

\WW{Our last set of simulations in figure \ref{fig:9} demonstrate the effects of increasing the diffusion coefficients for the noncompliant populations ($d_{S^*},d_{I^*},d_{R^*})$ while leaving others fixed. Recall, if $d_S = d_I = d_R$ and $d_{S^*} = d_{I^*} = d_{R^*}$, then the compliant population $N = S+I+R$ and noncompliant population $N^* = S^* + I^* + R^*$ satisfy an SIS style system given by \eqref{eq:N},\eqref{eq:Nstar}, where $N$ is viewed as the ``susceptible" population and $N^*$ is viewed as the ``infectious" population. With this interpretation, we have heuristics provided by the SIS literature. For example, the authors of \cite{allen} analyze the behavior of the steady-state endemic solution to a similar SIS system in the limit as the diffusion coefficient of the susceptible population goes to zero while the diffusion coefficient of the infectious population remains fixed (or equivalently, the limit as the infectious diffusion coefficient goes to infinity while the susceptible diffusion coefficient remains fixed). They prove that in this limit, the steady-state endemic solution for the infectious population tends to zero. Their system is slightly different so the result does not directly apply to \eqref{eq:N},\eqref{eq:Nstar}, but it gives reason to expect that as $d_{S^*}, d_{I^*}, d_{R^*}$ grow, the large time limit of $N^*$ should go to zero. The simulations support this conclusion. In \ref{fig:9}, we use the same parameters as in figure \ref{fig:6}, except successively increase $d_{S^*},d_{I^*}, d_{R^*}$ from $0.02$ to $0.05$, then $0.1$, then $1$, then $5$. Because the increase is small at first (top images in figure \ref{fig:9}), the effect to the total noncompliant population is subtle, but one can discern the effect because the slight decrease in noncompliant population causes the epidemic to progress more slowly. In the bottom two images of figure \ref{fig:9}, the effect of enlarging the diffusion coefficient is more pronounced: when $d_{S^*} = d_{I^*} = d_{R^*} = 1$ or $5$, the total noncompliant population appears to settle at a much smaller value, and because of this, no epidemic occurs within the displayed time frame.}

\begin{figure}[t!]
\centering
\includegraphics[height = 0.221\textheight]{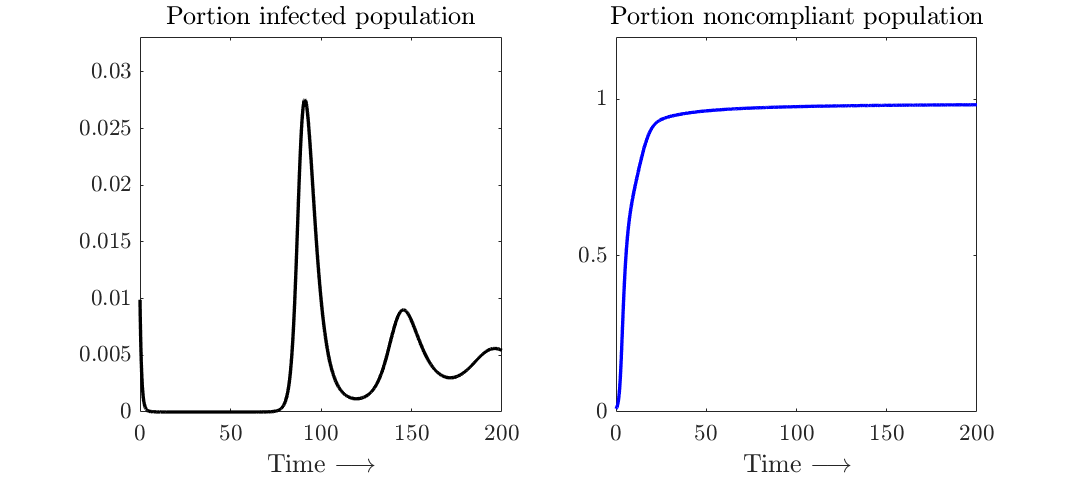}\\
\includegraphics[height = 0.221\textheight]{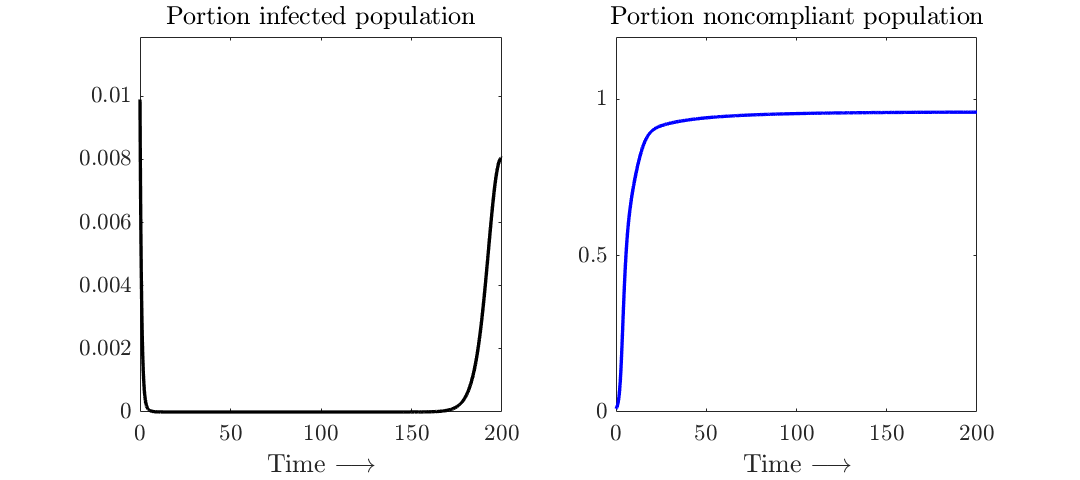}\\
\includegraphics[height = 0.221\textheight]{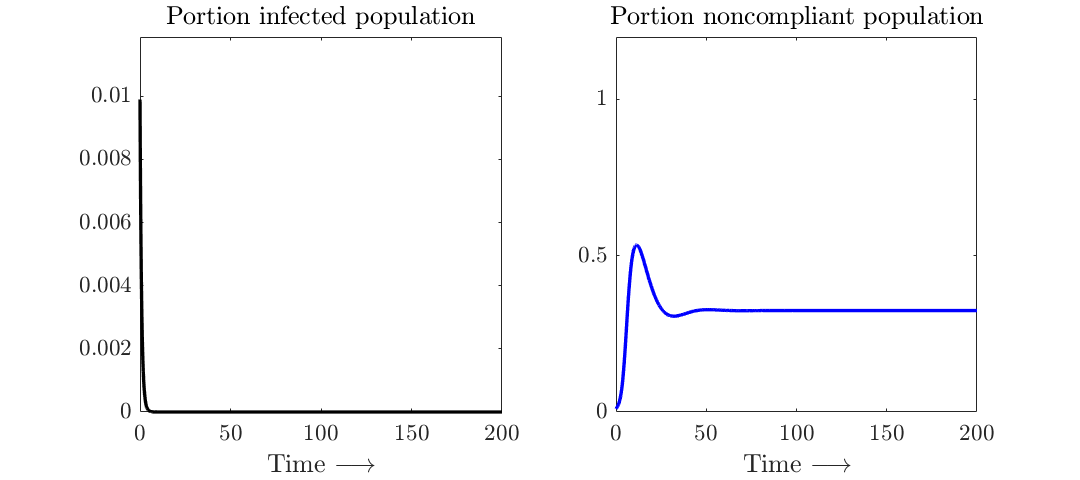}\\
\includegraphics[height = 0.221\textheight]{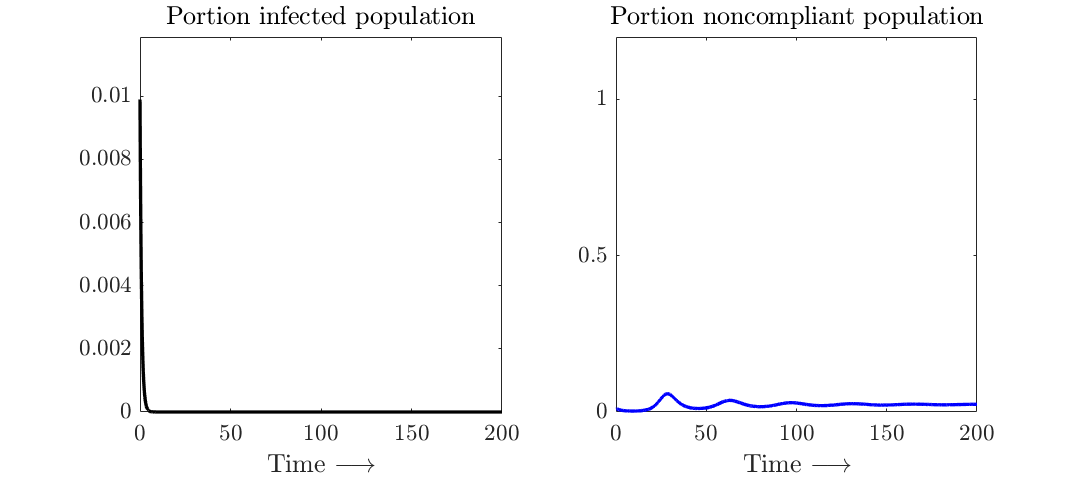}\\
\caption{Parameters are the same as in figure \ref{fig:6} except $d_{S^*},d_{I^*},d_{R^*}$ increase to $0.05$ (top), $0.1$ (second), $1$ (third), $5$ (bottom). As these coefficients increase, the total noncompliant population settles at a smaller value, which agrees with intuition provided by \cite{allen}.}
\label{fig:9}
\end{figure}

\section{Conclusion \& Future Directions} \label{sec:conclusion}

In this work, we present a reaction-diffusion SIR-type epidemic model, wherein noncompliance with prevention measures spreads via mass-action parallel to a disease. A key assumption is that the disease spreads more quickly among populations which are not compliant with prevention measures. We present proofs of global existence for our system, as well as $\R_0$ analysis and asymptotic behavior in different parameter regimes.

We propose four future directions for work along these lines. \WW{First, in theorems \ref{thm:globalStabilityNonComp} and \ref{t.w11191}, we prove global stability of disease free states under the alternate assumptions that $\xi = 0$ or $\xi = 1$ (so that any newly introduced members to the population are noncompliant or compliant, respectively). It would be very interesting to prove similar results in the case that $\xi \in (0,1)$, whereupon the steady-state system is the following coupled nonlinear elliptic equations: \begin{equation} 
\begin{split}
-d_S\Delta \tilde S &= \xi b(x) - \mu \tilde S\tilde S^* + \nu \tilde S^* - \delta \tilde S, \\
-d_{S^*}\Delta \tilde S^* &= (1-\xi)b(x) + \mu \tilde S\tilde S^* - \nu \tilde S^* - \delta \tilde S^*.
\end{split}
\end{equation} In this case, if $d_S \neq d_{S^*}$, it is unclear whether solutions even exist. However, if $d_S = d_{S^*}$, then one can solve by first considering $\tilde \Sigma = \tilde S + \tilde S^*$ (which satisfies a simple linear elliptic equation), and having resolved $\tilde \Sigma$, both $\tilde S$ and $\tilde S^*$ each satisfy their own decoupled nonlinear elliptic equation, for which we have existence of solutions. However, in this case, the derivation of the principle eigenvalue and reproductive ratio is no longer so simple. One could still define the reproductive ratio as the spectral radius of a certain elliptic operator as in \cite{WZ}, and achieve local stability, but the definition is then less quantitative (that is, it is more difficult to see precisely how the reproductive ratio depends on relevant parameters), and  it is less clear how to arrive at conditions for global stability. A full exploration of these questions could prove very interesting and illuminating.}

Second, similar work to this is carried out in a network-theoretic setting in \cite{CP2}. It may be of interest to develop other types of epidemic models---for example, agent-based or self-exciting point process models---which incorporate human behavior in similar ways. Analysis and synthesis of these different types of models could elucidate the different implications of social contagion theory in epidemiology. Third, there has been recent interest (even before the onset of COVID-19), in coupling within-host and between-host models for infectious diseases \cite{multiscale0,multiscale1,multiscale2,multiscale3}. Incorporating human behavior into these models in a similar manner to what we suggest here may result in very high fidelity modeling of a pandemic. Finally, our work elucidates different facets of an epidemic given our assumptions regarding the manner in which human behavior is incorporated. However, this work is entirely qualitative. To push toward real-world utility, a more data-driven study which incorporates parameter estimation would likely be necessary.

\section*{Acknowledgments}
CP is supported in part by NSF DMS-1937229 through the Data Driven Discovery Research Training Group at the University of Arizona. WW is supported in part by an AMS-Simons travel grant, and would like to thank Chris Henderson for many useful comments and remarks. Both authors were supported by a postdoctoral collaborative research grant through the University of Arizona Department of Mathematics. \WW{The authors would like to thank two anonymous reviewers for their very detailed and helpful comments, especially regarding the remark after the proof of theorem 4.4 and for suggestions regarding the simulations and results section.}

\clearpage

\bibliographystyle{plain}
\bibliography{references}

\begin{thebibliography}{10}

\bibitem{SC5}
Mir~M Ali, Aliaksandr Amialchuk, and Debra~S Dwyer.
\newblock The social contagion effect of marijuana use among adolescents.
\newblock {\em PloS one}, 6(1):e16183, 2011.

\bibitem{allen}
Linda J.~S. Allen, B.~M. Bolker, Yuan Lou, and A.~L. Nevai.
\newblock Asymptotic profiles of the steady states for an sis epidemic
  reaction-diffusion model.
\newblock {\em Discrete and Continuous Dynamical Systems}, 21(1):1--20, 2008.

\bibitem{ZAMP}
Eric Avila-Vales and {\'A}ngel~GC P{\'e}rez.
\newblock Dynamics of a reaction--diffusion sirs model with general incidence
  rate in a heterogeneous environment.
\newblock {\em Zeitschrift f{\"u}r angewandte Mathematik und Physik},
  73(1):1--23, 2022.

\bibitem{SC7}
Tarun Bastiampillai, Stephen Allison, and Sherry Chan.
\newblock Is depression contagious? the importance of social networks and the
  implications of contagion theory.
\newblock {\em Australian \& New Zealand Journal of Psychiatry},
  47(4):299--303, 2013.

\bibitem{multiscale1}
Nicola Bellomo, Diletta Burini, and Nisrine Outada.
\newblock Multiscale models of covid-19 with mutations and variants.
\newblock {\em Networks \& Heterogeneous Media}, 17(3), 2022.

\bibitem{RD4}
Henri Berestycki, Jean-Michel Roquejoffre, and Luca Rossi.
\newblock Propagation of epidemics along lines with fast diffusion.
\newblock {\em Bulletin of Mathematical Biology}, 83(1):2, 2021.

\bibitem{med4}
Courtney Bir and Nicole~Olynk Widmar.
\newblock Social pressure, altruism, free-riding, and non-compliance in mask
  wearing by us residents in response to covid-19 pandemic.
\newblock {\em Social sciences \& humanities open}, 4(1):100229, 2021.

\bibitem{CP1}
Marcelo Bongarti, Luke~Diego Galvan, Lawford Hatcher, Michael~R. Lindstrom,
  Christian Parkinson, Chuntian Wang, and Andrea~L. Bertozzi.
\newblock Alternative {SIAR} models for infectious diseases and applications in
  the study of non-compliance.
\newblock {\em Mathematical Models and Methods in Applied Sciences},
  32(10):1987--2015, 2022.

\bibitem{SC2}
Nicholas~A Christakis and James~H Fowler.
\newblock Social contagion theory: examining dynamic social networks and human
  behavior.
\newblock {\em Statistics in medicine}, 32(4):556--577, 2013.

\bibitem{RD5}
Keng Deng.
\newblock Asymptotic behavior of an sir reaction-diffusion model with a linear
  source.
\newblock {\em Discrete \& Continuous Dynamical Systems-Series B}, 25(11),
  2019.

\bibitem{SC6}
Daniel Eisenberg, Ezra Golberstein, Janis~L Whitlock, and Marilyn~F Downs.
\newblock Social contagion of mental health: evidence from college roommates.
\newblock {\em Health Economics}, 22(8):965--986, 2013.

\bibitem{ModelWithHumanBehavior1}
Juan~Pablo Guti{\'e}rrez-Jara, Katia Vogt-Geisse, Maritza Cabrera, Fernando
  C{\'o}rdova-Lepe, and Mar{\'\i}a~Teresa Mu{\~n}oz-Quezada.
\newblock Effects of human mobility and behavior on disease transmission in a
  covid-19 mathematical model.
\newblock {\em Scientific Reports}, 12(1):1--18, 2022.

\bibitem{med3}
Stephen Hills and Yolanda Eraso.
\newblock Factors associated with non-adherence to social distancing rules
  during the covid-19 pandemic: a logistic regression analysis.
\newblock {\em BMC Public Health}, 21(1):1--25, 2021.

\bibitem{SC1}
Nathan~O Hodas and Kristina Lerman.
\newblock The simple rules of social contagion.
\newblock {\em Scientific reports}, 4(1):1--7, 2014.

\bibitem{RD2}
Wenzhang Huang, Maoan Han, and Kaiyu Liu.
\newblock Dynamics of an sis reaction-diffusion epidemic model for disease
  transmission.
\newblock {\em Math. Biosci. Eng}, 7(1):51--66, 2010.

\bibitem{ModelWithHumanBehavior2}
Semu~M Kassa, John~BH Njagarah, and Yibeltal~A Terefe.
\newblock Analysis of the mitigation strategies for covid-19: from mathematical
  modelling perspective.
\newblock {\em Chaos, Solitons \& Fractals}, 138:109968, 2020.

\bibitem{Krylov}
Nikolai~Vladimirovich Krylov.
\newblock {\em Lectures on elliptic and parabolic equations in Sobolev spaces},
  volume~96.
\newblock American Mathematical Soc., 2008.

\bibitem{kuttler}
Christina Kuttler.
\newblock Reaction-diffusion equations with applications.
\newblock In {\em Internet seminar}, 2011.

\bibitem{LadyEtAl}
Ol'ga~A Ladyzenskaja, Vsevolod~Alekseevich Solonnikov, and Nina~N Ural'ceva.
\newblock {\em Linear and quasi-linear equations of parabolic type}, volume~23.
\newblock American Mathematical Soc., 1988.

\bibitem{med2}
Emily~E Levitt, Mahmood~R Gohari, Sabrina~K Syan, Kyla Belisario, Jessica
  Gillard, Jane DeJesus, Anthony Levitt, and James MacKillop.
\newblock Public health guideline compliance and perceived government
  effectiveness during the covid-19 pandemic in canada: Findings from a
  longitudinal cohort study.
\newblock {\em The Lancet Regional Health-Americas}, 9:100185, 2022.

\bibitem{SC3}
David~A Levy and Paul~R Nail.
\newblock Contagion: a theoretical and empirical review and
  reconceptualization.
\newblock {\em Genetic, social, and general psychology monographs}, 1993.

\bibitem{RD3}
Bo~Li and Qunyi Bie.
\newblock Long-time dynamics of an sirs reaction-diffusion epidemic model.
\newblock {\em Journal of Mathematical Analysis and Applications},
  475(2):1910--1926, 2019.

\bibitem{SC8}
Jens Ludwig and Jeffrey~R Kling.
\newblock Is crime contagious?
\newblock {\em The Journal of Law and Economics}, 50(3):491--518, 2007.

\bibitem{Luo}
Yantao Luo, Long Zhang, Tingting Zheng, and Zhidong Teng.
\newblock Analysis of a diffusive virus infection model with humoral immunity,
  cell-to-cell transmission and nonlinear incidence.
\newblock {\em Physica A: Statistical Mechanics and its Applications},
  535:122415, 2019.

\bibitem{morgan}
Jeff Morgan.
\newblock Boundedness and decay results for reaction-diffusion systems.
\newblock {\em SIAM Journal on Mathematical Analysis}, 21(5):1172--1189, 1990.

\bibitem{multiscale0}
Lisa~N Murillo, Michael~S Murillo, and Alan~S Perelson.
\newblock Towards multiscale modeling of influenza infection.
\newblock {\em Journal of theoretical biology}, 332:267--290, 2013.

\bibitem{med5}
Amy Nivette, Denis Ribeaud, Aja Murray, Annekatrin Steinhoff, Laura Bechtiger,
  Urs Hepp, Lilly Shanahan, and Manuel Eisner.
\newblock Non-compliance with covid-19-related public health measures among
  young adults in switzerland: Insights from a longitudinal cohort study.
\newblock {\em Social science \& medicine}, 268:113370, 2021.

\bibitem{CP2}
Kaiyan Peng, Zheng Lu, Vanessa Lin, Michael~R Lindstrom, Christian Parkinson,
  Chuntian Wang, Andrea~L Bertozzi, and Mason~A Porter.
\newblock A multilayer network model of the coevolution of the spread of a
  disease and competing opinions.
\newblock {\em Mathematical Models and Methods in Applied Sciences},
  31(12):2455--2494, 2021.

\bibitem{pierre}
Michel Pierre.
\newblock Global existence in reaction-diffusion systems with control of mass:
  a survey.
\newblock {\em Milan Journal of Mathematics}, 78(2):417--455, 2010.

\bibitem{multiscale2}
D~Bhanu Prakash, DKK Vamsi, D~Bangaru Rajesh, and Carani~B Sanjeevi.
\newblock Control intervention strategies for within-host, between-host and
  their efficacy in the treatment, spread of covid-19: A multi scale modeling
  approach.
\newblock {\em Computational and Mathematical Biophysics}, 8(1):198--210, 2020.

\bibitem{Ren}
Xinzhi Ren, Yanni Tian, Lili Liu, and Xianning Liu.
\newblock A reaction--diffusion within-host hiv model with cell-to-cell
  transmission.
\newblock {\em Journal of mathematical biology}, 76(7):1831--1872, 2018.

\bibitem{SC4}
Joseph~Lee Rodgers, David~C Rowe, and Maury Buster.
\newblock Social contagion, adolescent sexual behavior, and pregnancy: a
  nonlinear dynamic emosa model.
\newblock {\em Developmental Psychology}, 34(5):1096, 1998.

\bibitem{ModelWithHumanBehavior3}
Cristiana~J Silva, Guillaume Cantin, Carla Cruz, Rui Fonseca-Pinto, Rui
  Passadouro, Estev{\~a}o~Soares Dos~Santos, and Delfim~FM Torres.
\newblock Complex network model for covid-19: human behavior, pseudo-periodic
  solutions and multiple epidemic waves.
\newblock {\em Journal of mathematical analysis and applications},
  514(2):125171, 2022.

\bibitem{Smith}
Hal~L Smith.
\newblock {\em Monotone dynamical systems: an introduction to the theory of
  competitive and cooperative systems: an introduction to the theory of
  competitive and cooperative systems}.
\newblock Number~41. American Mathematical Soc., 2008.

\bibitem{smoller}
Joel Smoller.
\newblock {\em Shock waves and reaction—diffusion equations}, volume 258.
\newblock Springer Science \& Business Media, 2012.

\bibitem{med1}
Madison Stoddard, Debra Van~Egeren, Kaitlyn~E Johnson, Smriti Rao, Josh
  Furgeson, Douglas~E White, Ryan~P Nolan, Natasha Hochberg, and Arijit
  Chakravarty.
\newblock Individually optimal choices can be collectively disastrous in
  covid-19 disease control.
\newblock {\em BMC public health}, 21(1):1--12, 2021.

\bibitem{RD1}
Weiming Wang, Yongli Cai, Mingjiang Wu, Kaifa Wang, and Zhenqing Li.
\newblock Complex dynamics of a reaction--diffusion epidemic model.
\newblock {\em Nonlinear Analysis: Real World Applications}, 13(5):2240--2258,
  2012.

\bibitem{WZ}
Wendi Wang and Xiao-Qiang Zhao.
\newblock Basic reproduction numbers for reaction-diffusion epidemic models.
\newblock {\em SIAM J. Appl. Dyn. Syst.}, 11(4):1652--1673, 2012.

\bibitem{multiscale3}
Xueying Wang, Sunpeng Wang, Jin Wang, and Libin Rong.
\newblock A multiscale model of covid-19 dynamics.
\newblock {\em Bulletin of Mathematical Biology}, 84(9):99, 2022.

\bibitem{WuYinWang}
Zhuoqun Wu, Jingxue Yin, and Chunpeng Wang.
\newblock {\em Elliptic \& parabolic equations}.
\newblock World Scientific, 2006.

\end{thebibliography}
\end{document}